\documentclass[10.6pt, reqno]{amsart}
\usepackage{xcolor}

\usepackage{tikz}

\usepackage{graphicx} 
\usetikzlibrary{decorations.markings}

\definecolor{mypink}{HTML}{EA909D}
\definecolor{myblue}{HTML}{1D70B8}
\usepackage[citebordercolor=green]{hyperref}
\usepackage{relsize}
\usepackage{tikz}
\usepackage{tabularx}
\usepackage{caption}
\usepackage{graphicx}
\usepackage{cancel}
\usepackage{subfigure}
\usepackage{slashed}
\usepackage{amsfonts}
\usepackage{amssymb}
\usepackage{accents}
\usepackage{amsmath}
\usepackage{amsxtra}
\usepackage{epstopdf}
\usepackage{latexsym}
\usepackage{mathrsfs}
\usepackage{leftidx}
\usepackage{tensor}
\usepackage{enumitem}
\usepackage{setspace}

\usepackage[dvipsnames]{xcolor}

\newtheorem{theorem}{Theorem}[section]

\newtheorem{lemma}[theorem]{Lemma}
\newtheorem{corollary}[theorem]{Corollary}
\newtheorem{proposition}[theorem]{Proposition}
\newtheorem{definition}[theorem]{Definition}

\theoremstyle{remark}
\newtheorem{remark}[theorem]{Remark}

\numberwithin{equation}{section}

\def\sC{\mathscr{C}}

\def\sta#1#2{\stackrel{#1}{#2}}
\def\stc#1{\sta{\circ}{#1}}

\def\rp#1{^{\!(#1)}}

\def\bp{\boldsymbol{\partial}}

\def\T{\mathcal{T}}

\def\bb{{\mathbf{b}}}

\def\Er{\mbox{Er}}

\def\sn{{\slashed{\nabla}}}

\def\sQ{\mathscr{Q}}

\def\eh{\hat{\eta}}

\def\zb{{\underline{\zeta}}}

\def\bT{{\textbf{T}}}

\def\bR{{\textbf{R}}}

\def\bd{{\textbf{D}}}
\def\ti{\tilde}

\def\bg{\mathbf{g}}

\def\hk{{\hat{k}}}

\def\beaa{\begin{eqnarray*}}
\def\eeaa{\end{eqnarray*}}

\def\ba{\begin{array}}
\def\ea{\end{array}}

\def\be#1{\begin{equation} \label{#1}}
\def \eeq{\end{equation}}

\newcommand{\nn}{\nonumber}

\def\l{\langle}
\def\r{\rangle}

\def\nn{\nonumber}
\def\S{{\mathcal S}}
\def\fS{\mathfrak{S}}

\def\ud#1{\underline{#1}}

\def\S2{{\mathbb S}^2}

\def\bB{\mathbf{B}}

\def\E{{\mathcal E}}

\def\K{{\mathcal{K}}}

\def\ze{{\zeta}}

\def\Lie{{\mathcal L}}

\def\tr{\mbox{tr}}

\def\H{{\mathcal H}}

\def\N{{\mathcal N}}

\def\B{{\mathcal B}}

\def\R{{\mathcal R}}
\def\c{\cdot}

\def\a{\alpha}
\def\b{\beta}

\def\ep{{\epsilon}}
\def\ve{{{\textbf{$\varepsilon$}}}}
\def\l{\langle}
\def\r{\rangle}
\def\ga{\gamma}
\def\Ga{\Gamma}

\def\O{\mathcal{O}}

\def\p{\partial}

\def\nab{\nabla}
\def\hb{{\ud h}}

\def\Lb{{\underline{L}}}

\def\div{\mbox{\,div\,}}

\def\tr{\mbox{tr}}
\def\Tr{\mbox{Tr}}

\def\tir{{\tilde r}}

\def\f14{\frac{1}{4}}
\def\f12{{\frac{1}{2}}}

\def\t1a{t^{-\frac{1}{a}}}

\def\bm{{\bf m}}

\def\sD{\slashed{\Delta}}

\def\sn{{\slashed{\nabla}}}

\def\eh{\hat{\eta}}

\def\zb{{\underline{\zeta}}}

\def\bT{{\emph{\bf{T}}}}

\def\bR{{\emph{\bf{R}}}}

\def\bd{{\emph{\bf{D}}}}
\def\ti{\tilde}

\def\hk{{\hat{k}}}

\def\beaa{\begin{eqnarray*}}
\def\eeaa{\end{eqnarray*}}

\def\ba{\begin{array}}
\def\ea{\end{array}}
\def\be#1{\begin{equation} \label{#1}}
\def \eeq{\end{equation}}
\def\nn{\nonumber}

\def\l{\langle}
\def\r{\rangle}

\def\nn{\nonumber}
\def\S{{\mathcal S}}

\def\S2{{\mathbb S}^2}

\def\E{{\mathcal E}}

\def\ze{{\zeta}}

\def\Lb{\underline{L}}
\def\tr{\mbox{tr}}

\def\H{{\mathcal H}}

\def\B{{\mathcal B}}

\def\R{{\mathcal R}}
\def\c{\cdot}

\def\a{\alpha}
\def\b{\beta}

\def\l{\langle}
\def\r{\rangle}
\def\ga{\gamma}

\def\Ga{\Gamma}
\def\la{\lambda}

\def\p{\partial}

\def\nab{\nabla}

\def\Lb{{\underline{L}}}

\def\div{\mbox{\,div\,}}

\def\tr{\mbox{tr}}
\def\Tr{\mbox{Tr}}

\def\tir{{\tilde r}}

\def\f14{\frac{1}{4}}
\def\f12{{\frac{1}{2}}}

\def\t1a{t^{-\frac{1}{a}}}

\def\bm{{\bf m}}

\def\sD{\slashed{\Delta}}

\newcommand{\bea}{\begin{eqnarray}}
\newcommand{\eea}{\end{eqnarray}}

\def\nn{\nonumber}

\newcommand{\chih}{\hat{\chi}}
\newcommand{\chib}{\underline{\chi}}

\newcommand{\chibh}{\underline{\hat{\chi}}\,}
\newcommand{\les}{\lesssim}
\newcommand{\ges}{\gtrsim}

\def\bN{{\mathbf{N}}}

\def\S{\mathcal{S}}

\def\vs{\varsigma}

\def\sG{{\mathscr{G}}}
\def\sY{\mathscr{Y}}

\def\sQ{{\mathscr{Q}}}

\def\ud#1{\underline{#1}}

\def\fw{\mathfrak{w}}
\def\be{{(e)}}

\def\scg{\stc{\bg}}

\begin{document}
\title{Inevitable shock formation for $3$-D compressible Euler flows}
\author{Sergiu Klainerman}
\address{Department of Mathematics, Princeton University}
\email{seri@math.princeton.edu}

\author{Qian Wang}
\address{Mathematical Institute, University of Oxford}
\email{qian.wang@maths.ox.ac.uk}

\author{Shiwu Yang}
\address{Beijing International Center for Mathematical Research, Peking University}
\email{shiwuyang@math.pku.edu.cn}

\author{Pin Yu}
\address{Department of Mathematical Sciences and New Cornerstone Science Laboratory, Tsinghua University
}
\email{yupin@mail.tsinghua.edu.cn}
  \date{\today}
 
\begin{abstract}
We prove that solutions arising from smooth, sufficiently small, compactly supported perturbations of non-vacuum constant states in three-dimensional irrotational compressible flow must blow up in finite time, without any symmetry assumptions or other restrictions on the initial data. Moreover, we prove that shock inevitably forms at the boundary of the maximal Cauchy development, and that its formation time agrees, in the small-data asymptotic
regime, with the lifespan predicted by the radiation-field analysis.
\end{abstract}
 \maketitle

\section{Introduction}
\subsection{Motivation}\label{mot}

   A defining feature of the theory of quasilinear hyperbolic systems is that
   smooth initial data may eventually lead to singularity formation. This
   phenomenon is well understood in one space dimension, beginning with the
   scalar Burgers' equation\footnote{Research in this direction, related to the
   propagation of one-dimensional sound waves, was initiated by Monge, Poisson,
   Stokes, Riemann, and others.}.  The blow--up mechanism in the scalar case
   was extended to $2\times2$ systems, and in particular to second-order wave
   equations satisfying the genuine nonlinearity condition, in the works of
   Oleinik \cite{Oleinik} and Lax \cite{Lax1,Lax2}.  Klainerman and Majda
   \cite{K-Ma} treated a larger class of equations, including the nonlinear
   vibrating string equations, for which genuine nonlinearity is violated. General blow--up results for genuinely nonlinear\footnote{John's result was
   extended to more general systems by Liu \cite{TPLiu}. See also \cite{Chris_Perez}}. $n\times n$ systems are
   due to John \cite{FJohn}.   He showed, in the context of such systems,   that  compactly  supported  data \footnote{ The phenomenon of singularity formation in 1D  is  easiest to detect  for compactly supported
   data, since such data necessarily contain a compression region, that is, a
   region in which characteristics of the same family, issued from the initial  data,
   point toward one another.} of size proportional to
   $\ve$ lead to a shock-wave singularity in time $O(\ve^{-1})$. 
  In all these $1$-D  results, shock formation is tied to
   the collapse of the characteristic flow.

   In higher dimensions the situation is far more complicated, both because of
   the complexity of the equations and because of dispersion, a phenomenon
   typical of scalar quasilinear wave equations, including those arising from
   irrotational compressible Euler flow.  Dispersion may significantly delay,
   or even altogether prevent\footnote{As in the case of  equations satisfying the null
   condition.}, singularity formation for small, compactly supported
   perturbations of trivial data.

Foundational blow-up results for nonlinear and quasilinear wave equations are due to F. John \cite{FJohn1, FJohn2}. Related finite-time breakdown results for multidimensional conservation laws were obtained by Sideris \cite{Sideris1}\begin{footnote}{This is a result under large initial disturbance.}\end{footnote}, who subsequently treated the physically important three-dimensional compressible Euler equations \cite{Sideris} \begin{footnote}{This paper assumes initial compression, which also applies to small data.}\end{footnote}. These results establish   non-existence of global smooth solutions but do not give a detailed geometric description of shock formation.


     F. John \cite{FJohn3} gave the first example of a simple
     three-dimensional quasilinear wave equation,
    \[
      \partial_t^2 u-\triangle u=\partial_t\big(F(\partial_tu)\big),
    \]
    for which a complete analysis could be carried out.  He showed that all
    spherically symmetric solutions with arbitrarily small, compactly
    supported initial data lead to future shock type  singularities.

  More precisely, he showed that all spherically symmetric solutions of this
  equation, under the genuine-nonlinearity-type condition
  $F(0)=0$, $F'(0)\neq0$, and corresponding to smooth, sufficiently small data
  of size $\ve>0$,
  \[
    u(0,x)=\ve u_0(x),\qquad
    \partial_tu(0,x)=\ve u_1(x),
  \]
  supported in $\{|x|\le R_0\}$, must blow up after a finite time $T$.  In fact,
  for given $F,u_0,u_1,R_0$, there  exist a constant $\ve_0$ and a function
  $A(\ve)$, depending only on $F,u_0,u_1, R_0$, such that
  \begin{itemize}
  \item[a.)] The extinction (or lifespan)  time $T$ satisfies the upper bound\footnote{A lower
  bound was derived by John and Klainerman in \cite{John-K}.  See also
  \cite{K1} for a proof of the same result using the vectorfield method.}, for
  all $0<\ve\le\ve_0$,
  \beaa
  T< \exp{\Big(\frac{A(\ve)}{\ve}\Big)}.
  \eeaa

  \item[b.)] $A(\ve)$ is bounded independently of $\ve$, i.e.
  \beaa
  A=\limsup_{\ve\to 0} A(\ve) <\infty.
  \eeaa
  \item[c.)] Singularity formation, as in the case of the Burgers' equation, is
  due to the collapse of the separation between neighboring characteristics: in particular, the second derivatives of $u$
  blow up while the first derivatives remain bounded.

  \end{itemize}
Under radial symmetry assumption, results in 2-D for compressible Euler system were derived in \cite{Alin_4, Alinhac}, while the 3-D irrotational case was treated by Yin \cite{Yin}. The irrotational condition is needed to recast the compressible
  Euler equations (CEE) as a quasilinear wave equation of the form,
  \bea
  \label{eq:waveeq-intro}
  \bg^{\a\b}(\bp \phi)  \partial_\a \partial_ \b \phi=0.
  \eea
  Here $\bg$ is a specific Lorentzian metric, depending on the first derivatives
  of $\phi$, whose  precise form is recalled in Section \ref{setup}, see   \eqref{wave}.

  The above results are to be contrasted with the global existence results for
  systems of wave equations satisfying the null condition, in which case
  $T(\ve)=\infty$ for $0<\ve\le\ve_0$ sufficiently small.  The development of the
  theory of quasilinear wave equations  can be organized around two contrasting regimes:

    \begin{enumerate} 
    \item[I.] Equations motivated by General Relativity and other geometric
     theories, for which the null condition plays a prominent role and thus  small-data global existence is expected. 
     \item[II.] Equations motivated by continuum mechanics, such as  CEE,  for which small, compactly supported
      initial data are expected to blow up in dimensions 2 and 3.\footnote{In dimension
      2, the exponential bound in b) above is to be replaced by a quadratic
      expression in $\frac{A(\ve)}{\ve}$.  The first such results were proved
      by Alinhac \cite{Alin_4}-\cite{Alin2}.}

      \end{enumerate}
       In the second case, which is the focus of this paper, it is natural to
       expect that the symmetry assumptions can be dispensed with. In
      other words,  for equations  of the form \eqref{eq:waveeq-intro}, satisfying an appropriate genuine-nonlinearity condition\footnote{Some genuine-nonlinearity-type
       condition is needed to exclude equations satisfying the null condition. One can also consider equations with an additional  semi-linear term in $\bp \phi$.}, 
       any small,
       compactly supported perturbation of constant-state initial data should
       develop a finite-time singularity of John's type, namely one satisfying
       the properties a), b), and c) above.
        In the specific case of irrotational CEE, the natural conjecture, which we verify in this paper, is
        that all smooth, compactly supported, sufficiently small perturbations
        of a (non-vacuum) constant state develop finite-time shock-wave singularities of
        John's type at the boundary of the maximal Cauchy development.  A
        broader conjecture should include other important equations of
        continuum mechanics, where compression plays an important role, in
        dimensions $2$ and $3$ where dispersion is not sufficiently strong to
        counteract compression. It is also natural to conjecture an analogous result for the general full Euler system with non-trivial vorticity and entropy, the proof of which is a work in progress,  see \cite{Wangnotes}.

    We now briefly describe the most  relevant developments after
    \cite{FJohn3}.

    \begin{enumerate}
    \item In \cite{FJohn4}, F. John gave an important improvement of the almost
    global existence result of John--Klainerman \cite{John-K} by deriving a
    lower bound for $A(\ve)$ in terms of the initial data; see also the
    follow-up paper \cite{FJohn5}.  A similar result was derived by
    H\"ormander in \cite{Hormander_1}.
     \item The first results on singularity formation, for a general class of quasilinear wave equations,  without symmetry
     assumptions on the initial data are due to Alinhac \cite{Alin1,Alin2}. However, in addition to assuming small, compactly supported,  smooth data,  Alinhac imposed a non-degeneracy condition on the radiation profile of the initial data, which singles out the first blow-up point and ensures its non-degeneracy. In these works, the blow-up mechanism is tied to the degeneration of the characteristic foliation and does not require symmetry.
    
     \item In the seminal work \cite{shock_demetrios}, Christodoulou proved  the formation of shock for the $3$-D relativistic Euler  equations, without symmetry assumptions, assuming a compression condition on initial
      data constructed in an annulus surrounded by a non-vacuum constant state.
      His work introduced into the study of three-dimensional shock formation
      the global geometric techniques developed in connection with the proof of
      the nonlinear stability of Minkowski space \cite{CK}.  As in one
      dimension, the mechanism of shock formation is the collapse of the
      characteristic flow.  This requires a systematic analysis of the
      characteristics, that is, the null hypersurfaces associated with the
      acoustical metric, and of the effect of the Euler equations on their
      geometric properties.
      
   The similar result was later  proved for the irrotational isentropic compressible Euler equations  by Christodoulou-Miao \cite{Miao_thesis}.

     \item Christodoulou's results and methods led to a great deal of
     subsequent work in this direction, including:
     \begin{itemize}
     \item Speck's result on shock formation for the geometric wave equation
     in three dimensions \cite{Spck_shock_1}, 
\item Miao--Yu's result on shock formation along incoming acoustical cones for
some quasilinear wave equations satisfying the classical null condition
\cite{Pin-Shuang}, 
\item Luk--Speck's works on shock formation for the two- and
three-dimensional compressible Euler equations, including the non-irrotational
case \cite{Spck-luk_2} and the non-isentropic case \cite{Spck-luk_3}.
\end{itemize}
    \end{enumerate}
     One can also refer to the works of Buckmaster--Shkoller--Vicol \cite{Buck_2}-\cite{Buck_4},
where shocks with vorticity and entropy are constructed by perturbing a
Burgers' shock.

     These works all give results on shock formation  for data satisfying a
     specific compression condition. 
    The goal of this paper is to show that
   for compactly supported smooth perturbations, no restriction beyond smallness is necessary: as in $1$-D, and as in
     John's spherically symmetric three-dimensional model, the relevant
     compression is encoded indirectly by the compact support condition for the data.
    
\subsection{Set-up}\label{setup}
We consider the compressible Euler equations of $3$-space dimension for a perfect fluid under a barotropic equation of state, that is, the
pressure $p$ is a function of the density  $\rho:{\mathbb R}^{1+3}\rightarrow (0,\infty)$,
\begin{equation}\label{10.12.1.19}
p=p(\rho).
\end{equation}
 We fix outside of the Euclidean sphere $\{|x|=1\}$ at $t=0$ a constant  background  density $\bar\rho>0$. Define the normalized density
\begin{equation}\label{10.12.2.19}
\varrho=\ln (\rho/\bar \rho)
\end{equation}
 and the sound speed
 \begin{equation*}
  c=\sqrt{\frac{dp}{d\rho}}.
 \end{equation*}
For convenience, we assume the pressure function
\begin{equation*}
p(\rho)=A\rho^\ga,
\end{equation*}
where the constant $A>0$, and $\ga> 1$, which ensures the constant\begin{footnote}{ Here $c'(\rho)$ is the derivative with respect to $\rho$.}\end{footnote}   $\wp=c^{-1}c'+1=\frac{\ga+1}{2}> 1$. 

Let $v$ be the velocity of the compressible fluid, $v:{\mathbb R}^{3+1}\rightarrow {\mathbb R}^3$. At $t=0$, let $\{|x|\le 1\}$ be surrounded by the constant states $(\bar\rho, \bar v)$. Using the Galilean transform of coordinates $x^a\rightarrow x^a-{\bar v}^a t$, $v\rightarrow v-\bar v$
 we may assume without loss of generality that
\begin{equation}\label{cst}
\varrho=0,\quad c=\bar c =(A\ga)^\f12 (\bar\rho)^{\frac{\ga-1}{2}}>0,  \quad  v=0, \mbox{ for }|x|\ge 1, t=0.
\end{equation}

We define the acoustic metric $\bg$ as
\begin{equation}\label{metric}
\bg:=-c^2 dt\otimes dt+\sum_{a=1}^3(d x^a-v^adt)\otimes (dx^a-v^a dt).
\end{equation}
The inverse metric $\bg^{-1}$ can be written as
\begin{equation*}
\bg^{-1}=-\bT\otimes \bT+\Sigma_{a=1}^3 \p_a \otimes \p_a,
\end{equation*}
where $\bT$ is the future directed, time-like unit normal of $\Sigma_t$, i.e. the level set of $t$.  The components of  $\bg^{-1}$ will be denoted by
$\bg^{\a\b}$. \begin{footnote}{We adopt Einstein summation convention in this article. The range of  the indices of Greek letters such as
$\a,\b,\mu,\nu$ is $0,\cdots, 3$, and the range of the Latin letters $i,j,k,l,m,n,a,b$ is $1,2,3$, unless specified differently.}\end{footnote}
 Relative to the Cartesian coordinates, we have
\begin{equation*}
c\bT=\p_t+v^a \p_a.
\end{equation*}

With $\bB=c\bT$, we introduce the compressible Euler equations with (\ref{10.12.1.19}) for $\varrho$ and $v$,
\begin{equation}\label{4.23.1.19}
\left\{
\begin{array}{lll}
\bB\varrho=-\div v\\
\bB v^i=-c^2\delta^{ia} \p_a \varrho,
\end{array}
\right.
\end{equation}
where $\div v= \p_i v^i$ and  $\varrho$ is the normalized density function.

 Let $\tensor{\ud\ep}{_i^j^k}$, $i,j,k=1,2,3$, be the standard volume form on ${\mathbb R}^3$. We define the vorticity to be
$
\fw_i=\tensor{\ud\ep}{_i^j_k}\p_j v^k.
$  \begin{footnote}{ Here the indices of the tensor fields and differentiation are lifted and lowered by the Euclidean metric.}\end{footnote} 

In the isentropic irrotational case, $\fw=0$ for all $t$. The compressible Euler equations (\ref{4.23.1.19}) can be reduced to \begin{footnote}{See the derivation of (\ref{4.10.1.19}) and (\ref{4.10.2.19}) in Section \ref{Re_wave}}.\end{footnote}
\begin{align}
& \Box_\bg v^i= \sQ^i, \label{4.10.1.19}\\
& \Box_\bg \varrho= \sQ^0,\label{4.10.2.19}
\end{align}
where $\Box_\bg$ is the Laplace-Beltrami operator of the Lorentzian metric $\bg$, and the  quadratic terms are
\begin{align*}
&\sQ^i:=(\wp-2)\bg^{\a\b}\p_\a\varrho\p_\b v^i, \\
&\sQ^0:=-\wp\bg^{\a\b}\p_\a \varrho \p_\b \varrho-c^{-2}\bg^{\a\b}\p_\a v^i \p_\b v^i.
\end{align*}
Define $v=\nab\phi$, where $\phi$ is the wave potential with $\phi(0)=\phi_0.$ 
The $3$-D irrotational compressbile Euler equation takes the form for the wave potential $\phi$ 
\begin{equation}\label{wave}
\begin{split}
&\p_t^2 \phi+2\sum_{k=1}^3\p_k\phi\p_t \p_k\phi-c^2\Delta \phi+\sum_{i=1}^3\p_i\phi \p_k \phi \p_i \p_k \phi=0\\
&\phi(0)=\phi_0=\ve f, \quad \p_t\phi(0)=\phi_1=\ve g
\end{split}
\end{equation}
where $f, g$ are compactly supported within $\{|x|\le 1\}$.  By finite speed of propagation, the solution $\phi$ is trivial if $r\ge r_M$ with $r_M=\bar c t+1$ and $\bar c$ the sound speed of the constant state. 

We define  $\iota=-(\p_t \phi+\f12\sum_{i=1}^3|\p_i\phi|^2)$, and the enthalpy $w$ by
\begin{equation*}
w(\rho)=\int^\rho_{\bar\rho}\frac{p'(\tau)}{\tau} d\tau.
\end{equation*}
By Bernoulli's law,
\begin{equation}\label{Bernoulli}
   \p_t\phi+\f12|\nab \phi|^2+w(\rho)=0.
\end{equation}
With
$
w(\rho)=\iota,
$
\begin{equation}\label{12.29.1.25}
c^2(\rho(\iota))=\frac{w^{-1}(\iota)}{(w^{-1})'(\iota)}.
\end{equation}
Here $\iota$ depends only on $\p_t\phi$ and $|\nab\phi|^2$. Hence $c^2$  is a smooth function depending only on $\bp\phi$ where $\bp$ denotes both $\p$ and $\p_t$.
Thus the coefficient of (\ref{wave}) depends on $\bp\phi$ only.

In this paper, we assume the initial data is irrotational, i.e. $\fw(0)=0$. From the given data $(v_0, \rho_0)$, we derive  from (\ref{Bernoulli}) 
\begin{equation}\label{12.29.4.25}
    \phi_1=-\f12|\nab \phi_0|^2-w(\rho_0), \quad w(\rho_0)=A\frac{\ga}{\ga-1}(\rho_0^{\ga-1}-\bar\rho^{\ga-1}).
\end{equation}
If $\phi_1=0$ and $\phi_0=0$, it clearly implies that $v(0)=0=\varrho(0)$. 

We fix the convention that $\l t\r=t+1$. Throughout this paper, a universal constant may depend only on $f$ and $g$  in  (\ref{wave}), their derivatives, and the constant density $\bar \rho>0$. The notation $A_1\les B_1$ means $A_1\le C B_1$ with $C>0$ a universal constant; $A_1\ges B_1$ is understood in the similar way; and $A_1\approx B_1$ means both inequalities hold.  We write $A_1=O(B_1)$ means $|A_1|\les |B_1|$, and  $A_1=o(B_1)$ if $|A_1/B_1|\rightarrow 0$ as $B_1\rightarrow 0$. 

\subsection{Lower-bound theorem on lifepspan}
Since the coefficients of \eqref{wave} depend only on $\bp\phi$, we can apply 
the following lower-bound theorem on lifespan, proved  independently in John \cite{FJohn4} and H\"{o}rmander \cite{Hormander_1}.
\begin{theorem}\cite[Page 129, Theorem 6.5.9]{Hormander}\label{known}
  The Cauchy problem (\ref{wave}) with $f,g\in C_0^\infty({\mathbb R}^3)$ has a $C^\infty$ solution $\phi$ for $0\le t<T_\ve$, and satisfies
  \begin{equation*}
\liminf_{\ve\rightarrow 0} \ve \log T_\ve\ge \tau_*
\end{equation*}
where $\tau_*=(\mbox{max}_{\omega, q}\f12 G(\omega) \p^2 F_0(\omega, q)/\p q^2)^{-1}$ with $q=r-\bar c t$ and $\omega\in {\mathbb S}^2 $.
Here $G=-2\bar c^{-1}\wp$, $\ve F_0$ is the Friedlander radiation field of the Cauchy problem of the associated linear wave equations, 
\begin{equation}\label{5.5.2.26}
 F_0(\omega,q)=\frac1{4\pi}\Big(\bar c^{-1}\mathcal Rg(\omega,q)-\partial_q \mathcal Rf(\omega,q)\Big),
 \end{equation}
with $\mathcal R$  the Radon transform
$\mathcal Rh(\omega,q)=\int_{y\cdot\omega=q} h(y)\,dS_y$
and $\omega\in {\mathbb S}^2$.   
\end{theorem}

The calculation for $G=-2\bar c^{-1}\wp$ can be found in Proposition \ref{G-value} in Section \ref{Apdx}. 
We can prove that $\tau_*\ge 0$ and it vanishes only when  $f=0=g$. (See \cite[Theorem 6.2.2 and Lemma 6.5.4]{Hormander}.) Moreover, with the maximum in the definition of $\tau_*$ achieved at $(\omega_*, q_*)$,  the fact that $F_0(\omega, q)$ is supported within $|q|\le 1$  gives the range for $q_*$. The maximizing point identifies the outgoing direction and retarded time along which the lower-bound analysis predicts the first singular behavior. 

We choose  $$t_0=5 \bar c^{-1}$$ so that all outgoing null geodesics initiated from the support of the radiation field have  entered the exterior annular region where the acoustical-geometric estimates will be applied.  
 We will pick the outgoing null geodesic  
 \begin{equation}\label{7.26.1.26}
 \Upsilon_*(t) \mbox{ starting at } t=t_0, \mbox{ from } (\omega_*, q_*),
 \end{equation}
  which plays a crucial role in our proof.

\subsection{Main theorem and strategy of the proof}
In the context of the compressible Euler equations, a shock forms when the solution $(\varrho, v)$ remains bounded and continuous, while its first derivatives become unbounded. Geometrically, this corresponds to the intersection of neighboring characteristic hypersurfaces.

 The goal of this paper is to prove the following main theorem
\begin{theorem}[Main Theorem]\label{main}
There exists a small $\ve_0>0$, depending only on $f, g$ in (\ref{wave}), their derivatives and the constant state $\bar \rho>0$, such that, for any  $0<\ve\le \ve_0$,  
 (\ref{wave}) has a unique smooth solution for $0<t<T_\ve$, where the lifespan of the smooth solution $T_\ve$ satisfies
\begin{equation}\label{5.10.1.26}
    \lim_{\ve\rightarrow 0} \ve \log T_\ve=\tau_*.
\end{equation}
Moreover, the boundary of the maximal Cauchy development of the solution contains a shock at $t=T_\ve^e$ which satisfies  $T_\ve^e\ge T_\ve$, 
\begin{equation}\label{5.13.3.26}
        \lim_{\ve\rightarrow 0} \ve \log T^e_\ve=\tau_*,
\end{equation}
and, with $\Phi=(v, \varrho)$, when approaching the shock
 $$\limsup_{t\rightarrow T_{\ve-}^e} |\bp\Phi|=\infty.$$ 
\end{theorem}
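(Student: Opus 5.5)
Theorem \ref{known} already gives $\liminf_{\ve\to0}\ve\log T_\ve\ge\tau_*$. The main theorem therefore reduces to two things: the matching upper bound $\limsup_{\ve\to0}\ve\log T^e_\ve\le\tau_*$, and the identification of the breakdown on the boundary of the maximal development as a shock. Since $T_\ve\le T^e_\ve$, the upper bound on $T^e_\ve$ gives both \eqref{5.10.1.26} and \eqref{5.13.3.26}. My plan follows the Christodoulou framework, but initialized by the radiation field rather than by an imposed compression condition.

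\textbf{Step 1 (initial slab).} On $0\le t\le t_0=5\bar c^{-1}$, standard local theory gives a smooth solution with $|\bp^k\phi|\les\ve$. In the exterior region near the outgoing cone, the Friedlander asymptotics give $r\bp\phi=\ve\,\p_qF_0(\omega,q)\cdot(\text{null frame})+O(\ve^2)+O(\ve\l t\r^{-1})$. From $t_0$ on, I foliate by the outgoing acoustical null hypersurfaces $C_u$ of $\bg$, with optical function $u$ normalized so that $u=q$ at $t=t_0$. I introduce the inverse foliation density $\mu=-1/(\bg^{\a\b}\p_\a t\,\p_\b u)$. A shock corresponds to $\mu\to0$.

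\textbf{Step 2 (transport of $\mu$).} Along the null generator $L$, $\mu$ satisfies a transport equation of the form $L\mu=\f12 G\,(\Lb\phi\text{-type quantity})\cdot(\text{radiation factor})+\text{error}$, with $G=-2\bar c^{-1}\wp$ (Proposition \ref{G-value}). Writing $s=\ve\log t$, the leading term integrates to
\[
\mu(s,u,\omega)=1-s\cdot\f12 G(\omega)\p_q^2F_0(\omega,u)+O(\ve^\delta).
\]
This requires showing that the rescaled radiation profile $r\Lb\phi$ is transported along $L$ with $O(\ve^{1+\delta})$ error up to times $\ve\log t\le\tau_*+\eta$. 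Along $\Upsilon_*$, the generator through $(\omega_*,q_*)$, this expansion forces $\mu\to0$ at $s=\tau_*+O(\ve^\delta)$, i.e. at $T^e_\ve\le\exp((\tau_*+o(1))/\ve)$.

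\textbf{Step 3 (the bootstrap, the main obstacle).} I need energy estimates for commuted variations $Z^\a\Phi$, with $Z$ built from the acoustical frame (rotations, $T=\mu\,\p_u$-type, $L$), valid all the way up to $\mu\to0$, in a region of length $e^{\tau_*/\ve}$. I would use Christodoulou's weighted energies with $\mu$-weights and top-order descent, allowing polynomial blow-up in $\mu_m^{-1}$ at top order. The genuinely new difficulty is that there is no smallness of the compression relative to time: the $O(\ve)$ amplitude must be integrated over logarithmic time $1/\ve$. I therefore need errors in $L\mu$ that are integrable in $t$ with an extra gain, $O(\ve^2\log t/t)$ or better, rather than merely $O(\ve/t)$. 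I would exploit the null structure of the remaining quadratic terms (everything except the $G$-term satisfies the null condition) together with the $r^{-1}$ decay of good derivatives. The lower bound of Theorem \ref{known} and the John--H\"ormander almost-global estimates supply the bootstrap assumptions in the range $s<\tau_*-\eta$.

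\textbf{Step 4 (shock and blow-up).} Away from $\mu=0$, the solution stays smooth because the estimates close in terms of $\mu$. Hence the maximal development extends until $\min\mu\to0$, which happens first near $\Upsilon_*$. There $|\Phi|\les\ve$ stays bounded, while $|\bp\Phi|\ges|T\Phi|/\mu$. Here $T\Phi\approx\ve\,\p_q^2F_0\ne0$ near $(\omega_*,q_*)$, because $\p_q^2F_0(\omega_*,q_*)>0$ by maximality. So $\limsup|\bp\Phi|=\infty$ as $t\to T^e_\ve$. This also gives $T_\ve\le T^e_\ve$ and the two limits.
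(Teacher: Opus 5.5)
There is a genuine gap, and it sits exactly at the paper's main new ingredient: determining, on each acoustical generator, the value of the quantity that drives $\mu$. Two things are needed. First, the conserved quantity must be the $\mu$-weighted transversal derivative of $\Phi=\bp\phi$. This is the paper's $\bb z$, which satisfies $L(\bb z)=O(\ve\log\l t\r\l t\r^{-2})$ by \eqref{5.6.1.26}, and its profile is $\p_q^2F_0$. By contrast, $r\Lb\phi$ carries $\p_qF_0$, and with unit-normalized $\Lb$ the quantity $r\Lb\Phi$ itself blows up at the shock.

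Second, and this is the real issue, you need the value of $\bb z$ on $\Upsilon_{\omega_0,q_0}$ to accuracy $o(\ve)$. You read it off from your Step 1 formula, but that formula does not hold for the nonlinear solution at late times. The comparison $|Z^I\bp(\phi-\phi_\ve)|\les\ve^2\log(1/\ve)\l t\r^{-1}$ of Proposition \ref{12.31.4.25} is valid only for $t<1/\ve$. Once $\ve\log t\sim1$, the profile has been deformed at order $\ve$, and that deformation is the shock. The formula is not usable at $t_0$ either: there the $O(\ve\l t\r^{-1})$ term is as large as the main term, and transport from $t_0$ changes $\bb z$ by $O(\ve)$.

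So the matching must be done at an intermediate time, as the paper does at $t_1=1/(2\ve)$. This requires three steps absent from your plan:
(i) $\ti z-\stc{\Lb}(r\varrho)=o(\ve)$ on $\Sigma^m_{t_1}$ (Lemma \ref{comp_radiation}, using bounds on $\tir-\bar c^{-1}r$, $\tir y$ and $c\tir h-1$);
(ii) $\bar c\,\p_q(r\varrho)=\ve\p_q^2F_0+O(\ve^2\log(1/\ve))$, obtained from the Friedlander field and Bernoulli's law (Proposition \ref{7.03.1.26});
(iii) a drift estimate showing that the generator issued from $(\omega_0,q_0)$ at $t_0$ still sits at Cartesian $(\omega,q)=(\omega_0,q_0)+o(1)$ at $t_1$ (Corollary \ref{4.17.1.26}).
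Without (iii), evaluating $F_0$ at the generator labels $(\omega,u)$ in your formula for $\mu$ is unjustified. Moreover, the bound $r-\bar c\tir=O(\ve(\log\l t\r)^2)$ is useless once $\log t\sim\ve^{-1}$, so the matching is only available at such an intermediate time. This is Proposition \ref{12.31.1.25}; your proposal asserts it rather than proving it.

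The difficulty you flag in Step 3 is, by contrast, not new. As the paper uses it (Proposition \ref{decay_ricci_coef}), \cite[Theorem 17.1]{Miao_thesis} supplies, for small exterior data and without any compression assumption, the estimates in $D^+$ up to $T^e_\ve$ and the criterion that breakdown occurs only through $\inf\bb\to0$. Compression enters there only later, to force $\bb$ to vanish. Your error threshold is also too weak: $O(\ve^2\log t/t)$ integrates to $\frac{\ve^2}{2}(\log t)^2\approx\tau_*^2/2$ at the shock time, which is not small. What is needed, and available, is $L\bb=-\frac12\wp\bb z\tir^{-1}+O(\ve\l t\r^{-2})$ from \eqref{7.22.2.26}, together with \eqref{5.6.1.26}.

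Once the initialization at $t_1$ is supplied, integrating $L\mu$ directly along every generator is a legitimate alternative to the paper's route. The paper instead uses Riccati blow-up of $\ti z$ along $\Upsilon_*$ for the lifespan bound, followed by a contradiction argument on the generator where $\bb\to0$. Your direct route yields blow-up of $\bp\Phi$ at whichever generator $\mu$ vanishes, so you do not need the shock to sit near $\Upsilon_*$, a claim you did not justify. Finally, since $G<0$, maximality gives $\p_q^2F_0(\omega_*,q_*)<0$, not $>0$; only its nonvanishing is used.
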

\begin{remark}
Theorem \ref{main} confirms that the solution arisen from sufficiently small compactly supported, smooth perturbation develops singularity of John's type, i.e. satisfying a), b) and c) in Section \ref{mot}. 
\end{remark}
\begin{remark}
We actually prove that the blow-up of $\bp\Phi$ occurs along an outgoing null geodesic approaching the shock.
\end{remark}
\subsubsection{Strategy of the proof} The proof combines John-H\"{o}rmander's radiation-field analysis with the exterior acoustical geometry result in Christodoulou-Miao \cite{Miao_thesis}.   The main new idea is   to provide   the   mechanism   for the evolutionary formation of a compression region, prior to the blow-up,  whereas  all  current shock formation or blow-up  results for Euler equations assume compression or some other restriction for the initial data.\footnote{ Recall that \cite{shock_demetrios}, \cite{Miao_thesis} and \cite{Buck_2}-\cite{Buck_4},  \cite{Spck-luk_2}-\cite{Pin-Shuang}  impose a compression condition on the  initial data, and that  Alinhac imposed in \cite{Alin2} the non-degeneracy condition on the radiation profile of the data.} We prove that the compression forms  at the intermediary time  $t_1=\frac{1}{2\ve}>t_0>0$.  Crucial  for achieving that   is the derivation of    a  Riccati-type equation along $\Upsilon_*$ in (\ref{7.26.1.26}) (as  well as neighboring ones),  for a    specifically designed  quantity $ \ti z$, to which we refer as
the \textit{physical radiation field}.    Inspired by \cite{Wang2024}    we choose   that quantity  to be
\bea
\label{eq:phys.radfield}
  \ti z=c\widetilde r(\Lb\varrho-h\varrho),\qquad
  \tir=t+u.
\eea
Here   $u$ is the acoustical retarded time defined in (\ref{optical}), \,  $\Lb$ is a null vector field transversal to the level set of $u$, defined in (\ref{bb1}), \,  and
$h=\f12\operatorname{tr}\chi$ is half the null expansion of the outgoing
acoustical spheres.

Once those are established we obtain the upper bound for the
exterior lifespan $T_\ve^e$, and identify the first breakdown in the exterior region as shock
formation. We then prove the blow-up of the first derivative of solution $\Phi=(v, \varrho)$  approaching the shock. It turns out that  the upper bound of the exterior lifespan agrees with the lower bound of the lifespan given by Theorem \ref{known} in the asymptotic sense. This completes the proof of the main theorem. We illustrate in Figure \ref{figI} our strategy and provide further details below.


\begin{figure}[htbp]
    \centering

\scalebox{0.75}{
\begin{tikzpicture}[
    xscale=1.5, 
    yscale=1.0,
    dot mark/.style={
        postaction={
            decorate, 
            decoration={
                markings, 
                mark=at position #1 with {
                    \fill (0,0) circle (1.8pt);
                    \coordinate (bluedot) at (0,0);
                }
            }
        }
    }
]

    \def\H{6}               
    \def\B{1}               
    \def\TW{3.5}            
    \def\TWin{2.8}          
    \def\Ystep{0.5}         
    
    \def\XoutStep{1.2083}   
    \def\XinStep{0.75}      
    
    \def\E{0.22}

    \draw[thick, dash dot, -latex] (0, -1) -- (0, \H + 1.2) node[above] {$t$};

    \draw[thick, dashed] (\B, 0) arc (0:180:{\B} and {\B*\E});
    
    \draw[thick] (-\B, 0) -- (-\TW, \H); 
    \draw[thick] (\B, 0) -- (\TW, \H); 

    \draw[thick] (-\XinStep, \Ystep) to[bend right=10] (-\TWin, \H); 
    \draw[thick] (\XinStep, \Ystep) to[bend left=10] (\TWin, \H); 

    \draw[thick, dashed, gray] (0, \H) ellipse ({\TW} and {\TW*\E});
    \draw[thick] (0, \H) ellipse ({\TWin} and {\TWin*\E});

    \fill[mypink, fill opacity=0.3, even odd rule] 
        (0, \Ystep) ellipse ({\XoutStep} and {\XoutStep*\E}) 
        (0, \Ystep) ellipse ({\XinStep} and {\XinStep*\E});
    \draw[line width=1.5pt, mypink] (0, \Ystep) ellipse ({\XoutStep} and {\XoutStep*\E});
    \draw[line width=1.5pt, mypink] (0, \Ystep) ellipse ({\XinStep} and {\XinStep*\E});

    \draw[thick] (-\B, 0) arc (180:360:{\B} and {\B*\E});

    \draw[thick, myblue, dot mark=0.55] 
        (0.85, \Ystep) .. controls (1.2, 2.5) and (2.2, 4.5) .. (3.1, \H + 0.5);
    \fill[myblue] (0.85, \Ystep) circle (1.8pt);

    \node[font=\Large] at (-2.2, 3.5) {$D^+$};

    \node[anchor=north west, myblue, font=\small] at (0.85, \Ystep - 0.05) {$(\omega_*, q_*)$};
    \node[anchor=south west, myblue, font=\large] at (3.1, \H + 0.5) {$\Upsilon_*$};
    
    \node[anchor=west, myblue] at ([xshift=6pt]bluedot) {$t_1 = 1/2\varepsilon$};

    \draw[dashed] (-\XoutStep, \Ystep) -- (-4.5, \Ystep) node[left] {$t_0 = 5\bar{c}^{-1}$};
    \draw[dashed] (-\B, 0) -- (-4.5, 0) node[left] {$t = 0$};

    \draw[<-, >=stealth, line width=1.5pt, mypink] 
        (\XoutStep + 0.1, \Ystep) -- (3.5, \Ystep) 
        node[right, black] {\Large $\Sigma_{t_0}^m$};

\end{tikzpicture}
}
\caption{Set-up}
    \label{figI} 
\end{figure}

 \subsubsection*{Step 1: Geometric set-up.} 
Using the standard local energy estimates, we first propagate energies from $t=0$ to $t_0= 5 \bar c^{-1}$. At $t_0=5\bar c^{-1}$, we consider the annulus $\Sigma_{t_0}^m$, with $q_0=r-\bar c t_0\in [-1, 1]$, where $r=1+\bar c t_0$ is exactly the intersection of $t=t_0$ with the outer Minkowskian boundary. \begin{footnote}{This annulus is sufficiently thin and far away from the central axis. It matches with the setting in \cite{Miao_thesis}.}\end{footnote} We set up the standard radial foliation at $\Sigma_{t_0}^m$, i.e. 
$\bar c u=q_0$ at $t=t_0$, and define the acoustical function $u$ by using Eikonal equation  throughout the outgoing development of $\Sigma_{t_0}^m$, denoted by $D^+$ (see Definition \ref{def-causal}). 
This allows us to have the full set of geometric estimates of \cite[Theorem 17.1]{Miao_thesis} in $D^+$ for $t\in (t_0, T_\ve^e)$, where $T_\ve^e$ is the (exterior) lifespan of the outgoing development of $\Sigma_{t_0}^m$.  We summarize this set of estimates in Proposition \ref{decay_ricci_coef}.

\subsubsection*{ Step 2: Riccati equation for the physical radiation field}

Using the wave equation  \eqref{4.10.2.19} for $\varrho$,
\[
  \Box_{\bg}\varrho=\sQ^0,
\]
together with the causal-geometric estimates in $D^+$   (Proposition  \ref{decay_ricci_coef}) one obtains, along each
outgoing null geodesic $\Upsilon_{\omega,q}$ which initiates at $(\omega, q, t_0)$,  the Riccati-type equation, see \eqref{Riccati_ap},
\bea
\label{eq:MainRicatti-intro}
  \frac{d}{dt}\ti z(t)
  =
  \frac12\wp\,\tir^{-1}\ti z(t)^2
  +\Er_1(t)\ti z(t)
  +\Er_0(t),
  \quad t\in[t_0,T_\ve^e),
\eea
with proper control on the errors $\Er_1$  and $\Er_0$. The constant $\wp=(\gamma+1)/2$, due to the equations of state,  gives the desired positive sign.

Moreover, with the null lapse $\bb$, defined by
$\bb^{-1}=-\bT(u)$ in the outgoing acoustical foliation, we derive in (\ref{5.6.1.26}),
\bea
\label{eq:propag-bbz}
  L(\bb z)=O\!\left(\ve\log\langle t\rangle\langle t\rangle^{-2}\right),
  \qquad z=c^{-1}\widetilde z,
\eea
which shows that $\bb z$ does not vary much along the outgoing null geodesics. This is crucial for  proving the blow-up of $\bp\Phi$ 
at shock formation.

\subsubsection*{Step 3:  The key compression estimate}

In order to show the solution $\ti z$ of (\ref{eq:MainRicatti-intro}) blows up   within finite time, it is crucial to show
\begin{equation}\label{intro_1}
\ti z(\Upsilon_*(t_1))=-2\ve \p_q^2 F_0(\omega_*, q_*)+o(\ve),\,\, \mbox{ with } t_1=\frac{1}{2\ve}.
\end{equation}
Note that the right-hand side is positive, i.e. for sufficiently small $\ve>0$,  compression occurs at $t_1$.

To see (\ref{intro_1}), going back to the equation of (\ref{wave}) of the irrotational compressible Euler flow,  for $0<t< 1/\ve$, we approximate the solution of $\phi$ by the linear wave $\phi_\ve$  defined in (\ref{12.30.2.25}).
In Section \ref{pmain}, we first approximate $\ti z$ at $t_1=\frac{1}{2\ve}$ by $-2\p_q(r\varrho)$ in Lemma \ref{comp_radiation}. 
Using the results in Section \ref{prl_est} to compare the solution at $t=t_1$ with Friedlander's radiation field $\frac{\ve}{r}F_0$, together with the Bernoulli formula \eqref{Bernoulli} and the inverse function theorem, we obtain the key approximation result (see Proposition \ref{7.03.1.26})
\begin{equation}\label{7.22.1.26}
\p_q(r\varrho)(\omega, q)= \ve \bar c^{-1}\p^2_q F_0(\omega, q)+\mbox{ error},\,\,\mbox{ for }(\omega, q)\in D^+\cap \Sigma_{t_1}.
\end{equation}
 In Corollary \ref{4.17.1.26} in Section \ref{decay}, we prove that the variation of the values of $q$ and $\omega$ along an outgoing null geodesic starting at $\Sigma_{t_0}^m$ is negligible. Therefore applying (\ref{7.22.1.26}) to  $\Upsilon_*(t_1)$, the right-hand side (\ref{7.22.1.26}) can be approximated by $\ve \bar c^{-1}\p^2_q F_0(\omega_*, q_*)$ modulo error terms, which gives (\ref{intro_1}).
\subsubsection*{Step 4: Shock formation and lifespan}
 Using  (\ref{intro_1})  as initial data for (\ref{eq:MainRicatti-intro}), we provide a finite upper-bound of the blow-up time for the solution $\ti z(t)$,  and derive
\begin{equation*}
\lim_{\ve\rightarrow 0}\sup \ve \log T_\ve^e\le \tau_*.
\end{equation*}
Using this result and $T_\ve^e\ge T_\ve$, together with Theorem \ref{known}, and the breakdown criterion given by \cite[Theorem 17.1]{Miao_thesis} (see Proposition  \ref{decay_ricci_coef} (2)), we conclude  shock formation  and obtain the lifespan results in Theorem \ref{main}. 

\subsubsection*{Step 5: Blow-up of $\bp\Phi$}  
Shock formation, characterized by the vanishing of $\bb$, is confirmed in Step 4 by the breakdown criterion of \cite[Theorem 17.1]{Miao_thesis} (see Proposition \ref{decay_ricci_coef} (2)), according to which a small-data solution can break down in the exterior region $D^+$ at $T_\ve^e<\infty$ only if $\bb\rightarrow 0$ as $t\rightarrow T_{\ve-}^e$. 
This criterion, however, guarantees only the existence of a shock. It does not determine its precise location in $\Sigma_t^m$ as $t\to T_{\ve-}^e$. Under an additional initial compression assumption, such as that imposed in \cite{Miao_thesis}, one can track the evolution of the initial compression region and thereby locate the shock. 

  In Step 3, we discussed how the compression develops at $t=t_1$ along $\Upsilon_*$. It remains possible, however, that a shock forms earlier along another null geodesic. This precludes us from concluding that $\bb\rightarrow 0$ or $\bp\Phi$ blows up along $\Upsilon_*$ at $T_{\ve-}^e$.
  
  To complete the proof of Theorem \ref{main}, we show by contradiction that $z$ blows up along an outgoing null geodesic approaching the shock formed at $T_\ve^e$, which gives the blow-up of $\bp\Phi$. To see this, we derive in (\ref{7.22.2.26}) the structure equation
$$
L\bb=-\f12\wp \bb z\tir^{-1}+\mbox{ error}
$$
and rely on the property that the variation of $\bb z$ is negligible along an outgoing null geodesic due to (\ref{eq:propag-bbz}) (see (\ref{5.6.1.26})). We then proceed by contradiction as follows. Suppose that along an outgoing null geodesic, say $\Upsilon_1$, $\bb\rightarrow 0$ as $t\rightarrow T_\ve^e$, while $z$ stays bounded. Using the crucial upper-bound estimate (see (\ref{5.12.1.26})) of the blow-up time of $\ti z$ derived in Step 4, we prove that compression must have already formed at $\Upsilon_1(t_1)$, where we have
 $$\bb z\ges \ve, \quad \bb\approx \bar c.$$
  Nevertheless, approaching the shock with $z$ bounded along $\Upsilon_1$ implies $\bb z({T_{\ve-}^e})=0$. This contradicts the fact that the variation of $\bb z$ is negligible along the outgoing null geodesic $\Upsilon_1$, as discussed in Step 2.  

\begin{remark}
As mentioned earlier, the main novelty of our result is that it applies to general, smooth, compactly supported initial data without any restriction beyond smallness, in contrast to existing works. 
 In \cite[Chapter 18]{Miao_thesis},  Christodoulou–Miao  proved 
 $$\bb\approx 1+E(t)(1+\log t),$$  where the initial compression condition forces $E(t)$ to be uniformly negative when $t$ is sufficiently large.
 Thus, the right-hand side vanishes in finite time, and a shock forms. By contrast our  strategy is based on Riccati-type  equations which, together with  our   derivation of a compression  regime at the intermediate time $t_1 =\frac{1}{2\ve}$,   
 drive the exterior solution  to blow up in finite time. Shock formation is then a consequence of the criterion  established in \cite[Theorem 17.1]{Miao_thesis} (see Proposition  \ref{decay_ricci_coef} (2)). However, this argument guarantees only the existence of a shock at $T_{\ve-}^e$ and does not directly yield the blow-up of $\bp\Phi$ at the shock. In previous works, the latter can be obtained using the initial compression assumption and a more direct analysis of the evolution of the solution. Instead, our proof of the blow-up of $\bp\Phi$ at the first shock in the exterior region is based on a contradiction argument, explained in Step 5. 
\end{remark}

\begin{remark}
 In previous works such as \cite{Alinhac} and \cite{Yin}, the construction of the quantity, that verifies the desired Riccati equation, relies crucially on radial symmetry. This reduces the dynamics to an essentially one-dimensional $2\times 2$ system. Even after this substantial simplification, identifying a quantity that obeys the desired Riccati equation requires a delicate analysis. Such a construction would be even more challenging in the general case if we were to follow the same approach. Instead,
  the construction of (\ref{eq:phys.radfield}) follows that of \cite{Wang2024}, where $ \ti z$  plays a crucial role in controlling the dynamics via the Riccati equation.
\end{remark}
\subsection{Organization of the paper}
In Section \ref{prl_est}, we give preliminary results on approximating the solution of (\ref{wave}) in terms of the Friedlander radiation field within a moderately long time.

In Section \ref{causal}, we give the geometric set-up for the outgoing development of $\Sigma_{t_0}^m$, denoted by $D^+$. We define the acoustical function $u$, and derive in Lemma \ref{7.03.2.26} the key Raychaudhuri equation in the acoustical spacetime, whose error terms have good decay properties. 
Then we derive in Proposition \ref{7.03.3.26} the transport equation  of $z$ and $\bb z$, with $z=c^{-1}\ti z$, along the outgoing null geodesics.

In Section \ref{decay}, we recast the results of \cite[Theorem 17.1]{Miao_thesis} in Proposition \ref{decay_ricci_coef} in $D^+$, and use it to derive in Proposition \ref{7.9.4.26} comparison estimates on $\bar c \tir/r$, $\la$ (defined in Section \ref{causal}), etc. Using the comparison estimates, we then control in Corollary \ref{4.17.1.26} the drifting of the values of $q$ and $\omega$ along an outgoing null geodesic.

In Section \ref{pmain}, we prove Theorem \ref{main}. We first prove Proposition \ref{12.31.1.25} which gives (\ref{intro_1}) using the results in Section \ref{prl_est}, the inverse function theorem and Bernoulli law. Using Proposition \ref{7.03.3.26}, we derive the Riccati equation in Proposition \ref{Riccati_prop} for $\ti z$ with proper error control, and control $L(\bb z)$ as in (\ref{eq:propag-bbz}). Then using Proposition \ref{12.31.1.25} and Proposition \ref{Riccati_prop}, we prove Theorem \ref{main}.

In Section \ref{Apdx}, we give the calculation of $G(\omega)$ in Theorem \ref{known}, a set of geometric calculations used for giving the comparison estimates in $D^+$ in Section \ref{decay} and the derivation for (\ref{4.10.1.19}) and (\ref{4.10.2.19}) in Section \ref{Re_wave}.

\section{Preliminary control of the solutions}\label{prl_est}
Consider the linear wave equation 
\begin{equation}\label{lwv}
    (\p_t^2 -{\bar c}^2 \Delta) W(t,x)=0, W(0,x)=f, \p_t W(0, x)=g.
\end{equation}
 For $0\le t<1/\ve$, 
we construct the approximate solution
\begin{equation}\label{12.30.2.25}
\phi_\ve(t,x)=\ve W(t,x).
\end{equation}
We recall the following results from \cite{FJohn4} and \cite[Page 124-126]{Hormander}.
\begin{proposition}\label{12.31.4.25} With $\ve>0$ sufficiently small, 
if $0<t<1/\ve$,  $\phi_\ve$ defined in (\ref{12.30.2.25}) is $C^\infty$,   
and it satisfies
\begin{align*}
|Z^I \phi_\ve(t,x)|&\le C_{I}\ve(1+t)^{-1}\\
    |Z^I\bp(\phi-\phi_\ve)|&\le C_{I}\ve^2\log(1/\ve)(1+t)^{-1},
\end{align*}
where $Z$ denotes the commuting vector fields $\p_i, \p_t$, $\bar c t\p_i +\bar c^{-1}x^i\p_t$, $\O=\tensor{\ud\ep}{^a_{ij}} x^i \p_j$, and the scaling vector field $ t\p_t+r\p_r$.      
Here $|I|\ge 0$, and $C_{I}$ are independent of $\ve$.

\end{proposition}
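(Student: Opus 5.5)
The proposition splits into two statements, and I would handle them separately.

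\emph{The linear bound.} $\phi_\ve=\ve W$, and the fields $Z$ (translations, Lorentz boosts adapted to speed $\bar c$, rotations, scaling) commute with $\p_t^2-\bar c^2\Delta$ up to a constant multiple in the scaling case. So $Z^IW$ solves the same linear equation with smooth data supported in $\{|x|\le 1\}$. Those data are computed from $f,g$ and their derivatives using the equation. The classical pointwise decay for the $3$-D wave equation with compactly supported data then gives $|Z^IW|\le C_I(1+t)^{-1}$. This follows from Kirchhoff's formula (spherical means over spheres of radius $\bar c t$), or equivalently from Klainerman--Sobolev together with conservation of energy. Multiplying by $\ve$ gives the first estimate for all $t$, so in particular for $t<1/\ve$.

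\emph{The difference bound.} Set $\psi=\phi-\phi_\ve$. Write \eqref{wave} as
\[
(\p_t^2-\bar c^2\Delta)\phi=N(\bp\phi,\bp^2\phi),
\]
where $N$ collects $2\p_k\phi\p_t\p_k\phi+(c^2-\bar c^2)\Delta\phi+\p_i\phi\p_k\phi\p_i\p_k\phi$. Since $c^2-\bar c^2=O(|\bp\phi|)$ by \eqref{12.29.1.25}, $N$ is at least quadratic. Then $\psi$ satisfies
\[
(\p_t^2-\bar c^2\Delta)\psi=N,\qquad \psi(0)=0,\quad \p_t\psi(0)=0 .
\]
Two steps follow.

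\emph{Step (i): bootstrap.} Use the H\"ormander/John almost-global energy argument, i.e.\ Theorem~\ref{known} and its proof. On $[0,1/\ve]$ one has $\sum_{|I|\le k}\|Z^I\bp\phi\|_{L^2}\les\ve$, and by Klainerman--Sobolev $|Z^I\bp\phi|\les\ve\l t\r^{-1}$. This holds for $t\le 1/\ve$, which lies well within the lifespan $e^{\tau_*/\ve}$.

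\emph{Step (ii): estimate the inhomogeneous part.} Commuting $Z^I$ gives $\Box_{\bar c}Z^I\psi$ as a sum of terms $Z^{I_1}\bp\phi\cdot Z^{I_2}\bp^2\phi$ (plus higher order), all of size $\ve^2\l t\r^{-2}$ pointwise and supported in $r\le\bar c t+1$. I would then apply the standard pointwise estimate for the inhomogeneous 3-D wave equation (H\"ormander's Lemma 6.3.x, positivity of the fundamental solution). For a source bounded by $\ve^2\l t+r\r^{-1}\l t-r\r^{-1}\cdot\l t\r^{-1}$-type weights, it gives
\[
|Z^I\psi|\les\ve^2\l t\r^{-1}\log\l t\r .
\]
The key point is that along the light cone the source behaves like $\ve^2 t^{-2}$, whose integral against the retarded kernel gives $t^{-1}\int^t s^{-1}\,ds=t^{-1}\log t$. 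For $t\le 1/\ve$ this is at most $\ve^2\log(1/\ve)\l t\r^{-1}$.

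To get the bound on $Z^I\bp\psi$ itself, rather than on $Z^I\psi$, I would use that $\bp$ is a combination of the $Z$'s with coefficients $\l t-r\r^{-1}$, which is bounded. Alternatively, apply the same pointwise estimate to the commuted equation with one extra $Z$.

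\emph{Main obstacle.} The hard part is keeping exactly one logarithm. A crude energy/Gronwall estimate gives growth $(1+t)^{C\ve}$, which is harmless for $t\le1/\ve$. The real danger is that a naive bound on the quasilinear terms $Z^I\bp^2\phi$ loses a derivative. I would close this by placing the top-order terms on the left as part of the variable-coefficient operator (estimate $\psi$ via energy for $\Box_\bg$ rather than $\Box_{\bar c}$), and use $L^\infty$ only on lower-order factors. Since this is precisely the content of \cite{FJohn4} and \cite[pp.~124--126]{Hormander}, I would ultimately cite those arguments for the log-sharp inhomogeneous estimate.
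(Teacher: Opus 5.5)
Your proposal is correct and takes essentially the same approach as the paper. The paper gives no proof of its own and simply recalls the statement from \cite{FJohn4} and \cite[Page 124-126]{Hormander}; your outline is the standard argument behind that citation (commutation plus Kirchhoff for $\phi_\ve=\ve W$, almost-global energy and Klainerman--Sobolev bounds for $\phi$ on $[0,1/\ve]$, then a Duhamel estimate for $\psi=\phi-\phi_\ve$ giving $\ve^2\l t\r^{-1}\log\l t\r$), and you end by citing the same sources.

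Two small points. First, the $(1+|\bar c t-r|)^{-1}$ weight on the source, which you need for the logarithmic bound away from the cone, should be justified: it comes from $|\bp Z^{I}\bp\phi|\les(1+|\bar c t-r|)^{-1}\sum_{|J|\le|I|+1}|Z^J\bp\phi|$, since the crude bound $\ve^2\l t\r^{-2}$ alone only gives $O(\ve^2)$ near $r=0$. Second, no derivative loss occurs in Step (ii): the source involves only $\phi$, which Step (i) already controls to all orders, and $Z^I\bp\psi$ is just $Z^J\psi$ with $|J|=|I|+1$ because $\p_t,\p_i$ are among the $Z$'s.
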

 In what follows we introduce the notation $f=O^{\le n} (\mathfrak{N})$, $O_q^{\le n} (\mathfrak{N})$ to denote error terms of size $\mathfrak{N}$. More precisely,     
\begin{equation*}
f=O^{\le n} (\mathfrak{N}), \mbox{ if }|f, \p^\a f|\les \mathfrak{N},\, |\a|=1, \cdots, n
\end{equation*}
and
\begin{equation*}
f=O_q^{\le n} (\mathfrak{N}), \mbox{ if }|f, \p_q^l f|\les \mathfrak{N},\, l=1, \cdots, n.  
\end{equation*}

We will relate the linear wave to the Friedlander radiation field in Proposition \ref{apx_prop}, which is a classical result. For completeness, we include the proof below.

\begin{lemma}\label{3.31.3.26}
For  $h\in C_c^\infty(\mathbb R^3),$ consider the spherical mean 
$$I_h(r,\omega,q):=\int_{\mathbb S^2} h(r\omega+\tau\theta)\,d\theta,
\qquad \tau=r-q.$$
Then, for $r\gg 1$, 
$$I_h(r,\omega,q)=\frac1{(r-q)^2}\,\mathcal Rh(\omega,q)+O^{\le 3}(r^{-3})$$
holds
uniformly for $q$ in compact sets, where $\R h$ denotes the Radon transform of $h$, see (\ref{5.5.2.26}).
\end{lemma}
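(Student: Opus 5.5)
The plan is to write the spherical mean as a surface integral over the sphere $S_{r\omega,\tau}$ of radius $\tau=r-q$ centred at $r\omega$, and compare it with the integral over the tangent plane $\{y\cdot\omega=q\}$. Since $h$ is supported in a fixed ball $\{|y|\le R\}$ and $q$ ranges over a compact set, only the part of the sphere near the point $q\omega$ contributes. That part is a cap of angular size $O(R/\tau)$ around the direction $-\omega$.

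\textbf{Step 1: reduce to a graph over the plane.} By definition,
\[
I_h=\tau^{-2}\int_{S_{r\omega,\tau}} h\,dS.
\]
Choose coordinates $y=s\omega+y'$ with $y'\perp\omega$. Near $q\omega$, the sphere $|y-r\omega|=\tau$ is the graph
\[
s=\sigma(y')=r-\sqrt{\tau^2-|y'|^2}=q+\frac{|y'|^2}{2\tau}+O(|y'|^4\tau^{-3}).
\]
Its area element is
\[
dS=\frac{\tau}{\sqrt{\tau^2-|y'|^2}}\,dy'=(1+O(|y'|^2\tau^{-2}))\,dy'.
\]
Hence
\[
\tau^2 I_h=\int_{|y'|\le R} h(\sigma(y')\omega+y')\,(1+O(\tau^{-2}))\,dy'.
\]
Taylor expanding in $s$ gives $h(\sigma\omega+y')=h(q\omega+y')+O(\tau^{-1})$. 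Integrating over the compact $y'$-region, we get $\tau^2 I_h=\mathcal Rh(\omega,q)+O(\tau^{-1})$. Dividing by $\tau^2\approx r^2$ yields an error $O(r^{-3})$.

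\textbf{Step 2: derivatives.} The claim $O^{\le3}(r^{-3})$ requires the same bound for up to three derivatives of the remainder, in whatever variables are intended ($r,q,\omega$). The remainder is
\[
\tau^{-2}\int\Big[h(\sigma(y')\omega+y')J(y')-h(q\omega+y')\Big]dy',
\]
where $J$ is the area factor. Both $\sigma-q$ and $J-1$ are smooth functions of $(y',\tau^{-1})$ vanishing to order $\tau^{-1}$ and $\tau^{-2}$ respectively, with all derivatives bounded. By the chain rule, every derivative in $r$ or $q$ either falls on $\tau^{-1}$, which gains a power, or on $h$, which keeps the bracket's $O(\tau^{-1})$ structure by the mean value theorem. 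The cleanest way to organise this is to write the bracket as $\tau^{-1}$ times a smooth function of $(y',q,\omega,\tau^{-1})$ supported in $|y'|\le R$.

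\textbf{Main obstacle.} The step needing most care is handling $\omega$-derivatives and the rotation of the frame $y'\perp\omega$. To avoid this, I would instead parametrise the sphere by $\theta\in\mathbb S^2$ directly, using $y=r\omega+\tau\theta$. I would then change variables $\theta\mapsto y'=\tau(\theta-(\theta\cdot\omega)\omega)$ only after differentiating, using rotation invariance to fix $\omega=e_3$ locally. The remaining bookkeeping is routine.
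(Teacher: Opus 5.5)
Your proposal is correct and follows essentially the same route as the paper. You write the only relevant cap of the sphere as the graph $y\cdot\omega=q+\tau-\sqrt{\tau^2-|y'|^2}$ over the plane $\{y\cdot\omega=q\}$, Taylor expand $h$ and the area factor to get $\tau^2 I_h-\mathcal Rh=O(\tau^{-1})$ together with its $(r,\omega,q)$-derivatives, and divide by $\tau^2$. The paper does this last step through $B_h=r(\tau^2 I_h-\mathcal Rh)$, which matches your ``$\tau^{-3}$ times a smooth function of $(q,\omega,\tau^{-1})$'' and passes to spacetime derivatives by the chain rule. The paper handles the $\omega$-derivatives only by invoking rotation invariance. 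For your version, the cleanest form is the identity $\partial_\omega I_h=I_{\Omega h}$ (and likewise $\partial_\omega\mathcal Rh=\mathcal R(\Omega h)$), with $\Omega$ a rotation field. By contrast, differentiating $h(r\omega+\tau\theta)$ in $\omega$ at fixed $\theta$, as your ``change variables only after differentiating'' suggests, produces a spurious factor $r$ that cancels only after integrating by parts on $\mathbb S^2$.
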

\begin{proof}

By rotation invariance, we may assume $\omega=e_3$. Then the sphere
$|y-re_3|=\tau,\, \tau=r-q,$
can be written as a graph over the plane $y_3=q$ (taking the branch near $y_3=q$):
$$y_3=q+\sY(r,\omega,q;y'),
\qquad y'=(y_1,y_2)\in\mathbb R^2,$$
where $\sY=\tau -\sqrt{\tau^2 - |y'|^2}$
is smooth and, for $q$ in a fixed compact set,
$$\sY=O(r^{-1}),\qquad
\partial_{r,\omega,q}^\beta\sY=O(r^{-1}),\quad |\beta|\le 4.$$
Similarly, the surface measure has the form
$$dS = \sqrt{1+|\nab_{y'}\sY|^2} dy'=J(r,\omega,q;y')\,dy',$$
with
$$J=1+O(r^{-1}),\qquad \partial_{r,\omega,q}^\beta J=O(r^{-1}),\quad 1\le |\beta|\le 3.
$$
Since $h$ is compactly supported, the $y'$-integration is over a fixed compact set. Hence by coarea formula, we derive
$$\tau^2 I_h(r,\omega,q)
=
\int_{\mathbb R^2} h\big(y',\,q+\sY(r,\omega,q;y')\big)\,J(r,\omega,q;y')\,dy'.$$
According to (\ref{5.5.2.26}), 
$\mathcal Rh(\omega,q)=\int_{\mathbb R^2} h(y',q)\,dy'$, it follows that 
$$\tau^2 I_h-\mathcal Rh
=
\int \big[h(y',q+\sY)-h(y',q)\big]J\,dy'
+\int h(y',q)\,(J-1)\,dy'.$$

For the first term, Taylor’s theorem in the $y_3$-variable gives
$$h(y',q+\sY)-h(y',q)
=
\sY\partial_{y_3}h(y',q)
+
O(\sY^2).$$
Since $\sY=O(r^{-1})$, this is $O(r^{-1})$. Differentiating in $(r,\omega,q)$ up to order $3$ does not change the order of the errors, because every derivative of $\sY$ and $J$ is still $O(r^{-1})$, and $h$ is smooth and compactly supported.

For the second term, $J-1=O(r^{-1})$, and again the same remains true after up to three derivatives in $(r,\omega,q)$.
Therefore
$$
\partial_{r,\omega,q}^\beta\big(\tau^2 I_h-\mathcal Rh\big)=O(r^{-1}),
\quad |\beta|\le 3.
$$
Now we define
$$B_h(r,\omega,q):=r\big(\tau^2 I_h(r,\omega,q)-\mathcal Rh(\omega,q)\big),$$
then $B_h$ and its first three derivatives in $(r,\omega,q)$ are bounded:
$$
\partial_{r,\omega,q}^\beta B_h=O(1),\quad |\beta|\le 3.
$$
We now control the spacetime derivatives of the quantity
$$E_h=I_h-\tau^{-2}\mathcal Rh
=
\tau^{-2}r^{-1}B_h(r,\omega,q).$$
A spacetime derivative $\partial^\alpha$ of order $|\alpha|\le 3$ hits either
		the factor $\tau^{-2}r^{-1}$, or
	$B_h(r,\omega,q)$.
Now,
	$\tau\sim r$, so $\tau^{-2}r^{-1}=O(r^{-3})$;
each derivative of $\tau^{-2}r^{-1}$ is at worst $O(r^{-3})$ or better;
		derivatives of $B_h(r,\omega,q)$ are bounded. 
Hence
$$
\partial^\alpha E_h=O(r^{-3}),\quad |\alpha|\le 3.
$$
Therefore
$$
\partial^\alpha I_h
=
\partial^\alpha(\tau^{-2}\mathcal Rh)+O(r^{-3}),
\quad |\alpha|\le 3
$$
as stated.
\end{proof}
\begin{proposition}\label{apx_prop}
For $\f12 t_1<t< 2 t_1$ and with $|q|\les 1$,
\begin{equation*}
\phi_\ve(t,x)=\ve r^{-1} F_0(\omega, q)+\R_\ve(t,x), 
\end{equation*}
with $F_0(\omega, q)$ defined in (\ref{5.5.2.26}) and  the following error estimates  
\begin{equation*}
|\mathcal R_\varepsilon| + |\p^\a \mathcal R_\varepsilon|
\les \varepsilon r^{-2}, |\a|=1,2.
\end{equation*}
\end{proposition}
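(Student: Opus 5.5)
We plan to write $W$ through Kirchhoff's formula and then apply Lemma \ref{3.31.3.26} to each spherical mean. For the wave speed $\bar c$, Kirchhoff gives
\begin{equation*}
W(t,x)=\frac{t}{4\pi}\int_{\mathbb S^2} g(x+\bar c t\theta)\,d\theta+\partial_t\Big(\frac{t}{4\pi}\int_{\mathbb S^2} f(x+\bar c t\theta)\,d\theta\Big).
\end{equation*}
Write $x=r\omega$ and set $\tau=\bar c t=r-q$. Then the spherical means are exactly $I_g(r,\omega,q)$ and $I_f(r,\omega,q)$. In the range $\f12 t_1<t<2t_1$ with $|q|\les1$, we have $r\approx \bar c t\approx \ve^{-1}\gg1$, so Lemma \ref{3.31.3.26} applies uniformly.

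For the $g$-term, the lemma gives
\begin{equation*}
\frac{t}{4\pi}I_g=\frac{t}{4\pi}\Big(\tau^{-2}\R g(\omega,q)+O^{\le3}(r^{-3})\Big)=\frac{1}{4\pi\bar c}\,\tau^{-1}\R g+O(r^{-2}).
\end{equation*}
The error comes from multiplying an $O^{\le3}(r^{-3})$ quantity by $t\approx r$. A spacetime derivative falling on $t$ produces only $O(r^{-3})$, so two derivatives of the product are still $O(r^{-2})$. Next we replace $\tau^{-1}=(r-q)^{-1}$ by $r^{-1}$. The difference $r^{-1}\cdot q/(r-q)$ is $O(r^{-2})$ together with its derivatives, because $q=r-\bar c t$ and $\p^\a q$ is bounded.

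For the $f$-term, we first write $\frac{t}{4\pi}I_f=\frac{1}{4\pi\bar c}\,r^{-1}\R f(\omega,q)+\tilde E$, where $\tilde E=O^{\le 3}(r^{-2})$. We then apply $\p_t$. Since $\p_t q=-\bar c$ and $\p_t r^{-1}=0$, this gives $-\frac{1}{4\pi}r^{-1}\p_q\R f(\omega,q)+\p_t\tilde E$.

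The point needing care is that $\p_t\tilde E$ must be $O(r^{-2})$ along with two further derivatives. This is why Lemma \ref{3.31.3.26} controls three derivatives: $\p_t\tilde E$ uses one and the estimate on $\p^\a\R_\ve$ uses two more. We therefore track $\tilde E=t\,O^{\le3}(r^{-3})+(\tau^{-1}-r^{-1})\R f/(4\pi\bar c)$ directly and check that each term stays $O(r^{-2})$ after up to three derivatives. For the second term this uses smoothness of $\R f$ in $(\omega,q)$ and boundedness of spacetime derivatives of $\omega$ (they are $O(r^{-1})$) and of $q$.

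Adding the two contributions gives $W=r^{-1}F_0(\omega,q)+O(r^{-2})$, where the error has up to two derivatives bounded by $r^{-2}$ and $F_0$ is as in \eqref{5.5.2.26}. Multiplying by $\ve$ then yields the proposition with $|\R_\ve|+|\p^\a\R_\ve|\les \ve r^{-2}$.

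The main obstacle is purely bookkeeping. Lemma \ref{3.31.3.26} gives derivative bounds in the variables $(r,\omega,q)$, and these must be converted to Cartesian spacetime derivatives $\p^\a$. The conversion works because the Jacobian entries are bounded ($\p r$ and $\p q$) or decaying ($\p\omega=O(r^{-1})$). The lemma's statement that its $O^{\le3}$ error is in spacetime derivatives already handles most of this.
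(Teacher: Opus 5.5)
Your proposal is correct and follows the paper's route: Kirchhoff's formula, with each spherical mean expanded via Lemma \ref{3.31.3.26}. The only difference is bookkeeping in the $f$-term. The paper expands $\p_t(tI_f)=I_f-\bar c t\,\p_q I_f$, applies the lemma to $\p_q I_f$, and then asserts the $|\alpha|\le 2$ derivative bounds by repeating the calculation. You instead expand $\frac{t}{4\pi}I_f$ first and let $\p_t$ act on the explicit leading term, absorbing $\p_t\tilde E$ through the lemma's third-derivative control, which makes those derivative bounds explicit.
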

\begin{proof}
   
For $x=r\omega$ and $q=r-\bar c t$ fixed, we claim
\begin{equation}\label{4.2.1.26}
W(t,r\omega)
=
\frac{1}{r}\,F_0(\omega,q)+O^{\le 2}(r^{-2}),
\end{equation}
with $W$ defined by (\ref{lwv}) and $F_0(\omega, q)$ given in (\ref{5.5.2.26}). The estimate of $\phi_\ve(t,x)$ is then a consequence of the definition (\ref{12.30.2.25}).

Indeed the Kirchhoff formula expresses $W$ in terms of the spherical means of $f$ and $g$ as follows:
$$W(t,x)
=\frac{1}{\bar c}
\partial_t\!\left(\frac{\bar c t}{4\pi}\int_{\mathbb S^2} f(x+\bar c t\theta)\,d\theta\right)
+
\frac{t}{4\pi}\int_{\mathbb S^2} g(x+\bar c t\theta)\,d\theta.$$

Since $r=\bar c t+q$ and $r\approx t$, we have
$\frac1{\bar c^2 t}=\frac1{\bar c r}+O(r^{-2})$.
Using Lemma \ref{3.31.3.26}, we deduce, 
$$
\frac{t}{4\pi}\int_{\mathbb S^2} g(r\omega+\bar c t\theta)\,d\theta
=
\frac{t}{4\pi}\left((\bar c t)^{-2}\mathcal Rg(\omega,q)+O^{\le 3}(t^{-3})\right)
=
\frac1{4\bar c\pi r}\mathcal Rg(\omega,q)+O(r^{-2}).
$$
 Since $q=r-\bar c t$, we have $\partial_t q=-\bar c $, 
and thus
\begin{align*}
\frac{1}{4\pi}\p_t\left(t \int_{\mathbb S^2} f(x+\bar c t \theta) d\theta\right)&=\frac{1}{4\pi}\int_{\mathbb S^2}f(x+\bar c t\theta) d\theta-\frac{\bar ct}{4\pi}\p_q \left(\int_{\mathbb S^2}f(r\omega+(r-q)\theta) d\theta\right).
\end{align*}
Using Lemma \ref{3.31.3.26},  it follows that 
$$\frac{1}{4\pi}\int_{\mathbb S^2} f(r\omega+\bar c t\theta)\,d\theta
=
\frac1{4\pi (r-q)^2}\mathcal Rf(\omega,q)+O^{\le 3}(r^{-3});$$
and 
\begin{align}
\frac{\bar ct}{4\pi}&\p_q \left(\int_{\mathbb S^2}f(r\omega+(r-q)\theta) d\theta\right)\nn\\
&=\frac{\bar c t}{4\pi}\left(\p_q(\frac{1}{(r-q)^2}\R f(\omega, q))+O^{\le 2}(r^{-3})\right)\nn\\
&=\frac{\bar c t}{4\pi(r-q)^2}\p_q \R f(\omega, q)+\frac{1}{2\pi(r-q)^2}\R f(\omega, q)+O(r^{-2})\nn\\
&=\frac{1}{4\pi}(r^{-1}+O(r^{-2})) \p_q \R f(\omega, q)+\frac{1}{2\pi(r-q)^2} \R f(\omega, q)+O(r^{-2}).\nn
    \end{align}

Putting them together,
$$W(t,r\omega)
=
\frac1{4\pi r}\Big(\bar c^{-1}\mathcal Rg(\omega,q)-\partial_q\mathcal Rf(\omega,q)\Big)
+
O(r^{-2}).$$ 
Thus we have  
$$W(t,r\omega)=\frac1r\,F_0(\omega,q)+O(r^{-2}).$$

The same argument applies after differentiating the Kirchhoff formula, since differentiating f or g under the integral gives the same type of sphere integral. Repeating the above calculation using Lemma \ref{3.31.3.26} when it is differentiated by $\p^\a$ for $|\alpha|\le2$,  we have
$$\partial^\alpha W
=
\partial^\alpha\!\big(r^{-1}F_0(\omega,q)\big)
+
O(r^{-2}),$$
as stated in (\ref{4.2.1.26}).

\end{proof}

\section{Causal geometry in the acoustical spacetime}\label{causal}
Proposition \ref{12.31.4.25} only provides us with the control for $0<t<1/\ve$. To prove Theorem \ref{main}, however, we need control the solution in $D^+$ until $T_{\ve-}^e$, when singularity forms. To obtain such control, we will rely on \cite[Theorem 17.1]{Miao_thesis}. Since both the theorem and our blow-up mechanism are formulated in terms of the intrinsic acoustical frame, we begin by introducing the corresponding geometric setup, compatible with \cite{Miao_thesis}, and derive the crucial geometric structural equations.

We define an acoustical function $u$ to be the solution of the Eikonal equation
\begin{equation}\label{optical}
\bg^{\a\b}\p_\a u \p_\b u=0,
\end{equation}
where $\bg$ is the acoustical metric (\ref{metric}), 
and set 
$$u= \bar c^{-1} r-t_0, \bar c u\in[-1,1] \mbox{ at } t=t_0.$$

Note that the result of \cite{Miao_thesis} requires the proper setting at the slice of $\{t=t_0\}$. In particular, the analysis is on the exterior solution for data prescribed in a thin annulus near the boundary of the support. To match with its setting,  we  will take  $t_0=5 \bar c^{-1}$ as the initial slice to apply this result. 

\begin{definition}\label{def-causal}
We define
 $$T^e_\ve:=\sup\{t: \mbox{the exterior solution with } u\in [-\bar c^{-1}, \bar c^{-1}] \mbox{ is smooth in } [t_0, t)\}.$$
  Clearly $T_\ve^e\ge T_\ve$. 
  We fix the convention, with $r_M=\bar c t+1$, that 
 \begin{align*}
D&=\{ r\le r_M, 0\le t< T_\ve\},\quad D_0=\{r\le r_M, 0\le t\le t_0\},\\
  D^+&=\{-\bar c^{-1}\le u\le \bar c^{-1},\,  t_0\le t<T^e_\ve\},\quad \Sigma_t^m=D^+\cap\{t'=t\}.
 \end{align*} 
We denote the level sets of the acoustical function $u$ by $\H_u$ as the acoustical null cones in the exterior region $D^+$. 
\end{definition}

 
At $t=t_0$ with $u\in[-\bar c^{-1}, \bar c^{-1}]$,  for each  $\omega\in {\mathbb
S}^2$,  define the null vector field $\hat L$  to be the generator of  the null geodesic $\Upsilon_\omega$ in the acoustical spacetime by
\begin{equation}\label{6.29.2.19}
\bd_{\hat L} {\hat L}=0, \quad \frac{d}{ds}\Upsilon_\omega(s)=\hat L,\quad  \hat L(s)=1, 
\end{equation}
where $s=0$ as $\Upsilon_\omega$ initiates at $\Sigma_{t_0}^m$.  
Let
\begin{equation}\label{bb1}
\bb^{-1}:=-\l \hat L,\bT\r=-\bT(u),\quad e_4=\bb \hat L=\bT+\bN, \quad e_3:=\bT-\bN=2\bT-e_4, 
\end{equation}
\begin{footnote}{ Alternatively we may write $e_4=L$ and $e_3=\Lb$.}\end{footnote} where $\bN=\frac{\nab u}{|\nab u|}$ with $\nab=\nab_e$ the Levi-Civita connection of the induced metric, which is exactly the Euclidean metric, on $\Sigma_t$. Clearly
 \begin{equation*}
 \bT u=-\bb^{-1}=-\bN u.
 \end{equation*}
   
 We define in $D^+$
$$\tir=t+u, \, y^k= \bN^k-\frac{x^k}{c\tir},\, {y'}^k=\bN^k-\frac{x^k}{r}.$$

Let ${}\rp{a}\O=\tensor{\ud\ep}{^a_j_k}x^j\p_k, a=1,2,3$ and $\Pi^i_j=\delta_j^i-\bN^i\bN_j$. Define the rotation vector-field  by \begin{footnote}{${}\rp{a}\Omega$ can be written as $\Omega$ for short.}\end{footnote}
\begin{equation}\label{5.14.2.23}
{}\rp{a}\Omega^m={}\rp{a}\O^k\Pi_k^m.
\end{equation}
 
  We denote $\la^a=\l {}\rp{a}\O, \bN\r$. This implies
\begin{equation}\label{3.19.1.21}
{}\rp{a}\Omega+\la^a\bN={}\rp{a}\O.
\end{equation}

Due to the definition of $u$ at $t=t_0$ 
$$ \bN=\p_r, \bb=\bar c, \la=0, \mbox{ at }t=t_0 \mbox{ and } \forall r\ge r_M=\bar c t+1.$$

We define $S_{t,u}=\Sigma_t\cap \H_u$. In $D^+$,
for each $u$ the geodesic generators (\ref{6.29.2.19}) define a smooth one-to-one mapping from $S_{t_0,u}$ to $S_{t,u}$. Moreover, since $S_{t_0,u}=\{|x|=\bar c (u+t_0)\}$, we can assign to any point $p$ in $S_{t,u}$ pullback coordinates $\omega=(\omega_1,\omega_2)\in \mathbb S^2$,  by following the null geodesic $\Upsilon_{\omega, q}$ with $q=\bar c u$, from a point  $p_0\in S_{t_0,u}$ 
until it intersects the level set of $t$ at $p$.
$\omega$ and $u$ are constant along the integral curves of $cL$, \begin{footnote}{For fixed $\omega_0\in {\mathbb S}^2$ and $q_0\in[-1,1)$,  values of  $\omega$ and $q$, when expressed in terms of Cartesian coordinates  along $\Upsilon_{\omega_0,q_0}(t)$, vary with $t$. To control these variations, we derive the error estimates in Corollary \ref{4.17.1.26}.}\end{footnote} along which
\begin{equation}\label{11.27.1.23}
c L=\frac{\p}{\p t}.
\end{equation}
We may refer to $p$ by its transported pullback coordinates $p=(t,u, \omega_1, \omega_2)$. 
Let $\ga$ be the induced metric on $S_{t,u}$. We denote by $\sn$ its Levi-Civita connection, and the area  $|S_{t,u}|_\ga=\int_{S_{t,u}} 1 d\mu_\ga$.

Recall from (\ref{metric}) that the components of $\bg$ and $\bg^{-1}$ are
\begin{align*}
&\bg_{00}=-c^2+|v|^2, \quad \bg_{0i}=-v_i, \quad \bg_{ij}=\delta_{ij}\\
&\bg^{00}=-c^{-2}, \quad \bg^{0i}=-c^{-2} v^i \quad \bg^{ij}= \delta^{ij}-c^{-2}v^i v^j.
\end{align*}

Similar to \cite[Section 2]{Wang2024}, we have
 \begin{lemma}\label{dg}
The following formulas hold for a linear differential operator $Y$,  
\begin{equation}\label{10.6.1.22}
\begin{split}
Y(\bg_{\mu\nu}) L^\mu L^\nu&=-2c^{-1}\bN^i Y v_i-2 Y \log c, \quad Y (\bg_{\mu\nu}) \Lb^\mu \Lb^\nu=2c^{-1} Y v^i \bN^i-2Y\log c\\
Y (\bg_{\mu\nu})L^\mu \Lb^\nu&=-6 Y\log c\\
Y (\bg_{\mu\nu})\Lb^\mu e_A^\nu&, Y (\bg_{\mu\nu})L^\mu e_A^\nu=-c^{-1}Y v^i e_A^i,\quad Y(\bg_{\mu\nu}) e_B^\mu e_A^\nu=0.
\end{split}
\end{equation}
Using (\ref{10.6.1.22}), we have
\begin{equation}\label{4.9.1.26}
\Ga_{L\bN}^\bN= L\log c,\, \Ga_{L\bN}^A=-\sn_A\log c.
\end{equation}
where $\Ga_{XY}^Z=\Ga_{\a\b}^\mu X^\a Y^\b Z_\mu$, with $X, Y, Z$ vector fields,  and $\Ga_{\a\b}^\mu$ denotes the Christoffel symbol of $\bg$ under coordinates.
\end{lemma}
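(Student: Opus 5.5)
The plan is a direct computation from the explicit components of $\bg$ in Cartesian coordinates $(t,x^1,x^2,x^3)$, written in the null frame $(L,\Lb,e_A)$ of (\ref{bb1}). Differentiating the component formulas recalled just before the lemma gives, for any derivation $Y$,
\begin{equation*}
Y\bg_{00}=-2cYc+2v^iYv^i,\qquad Y\bg_{0i}=-Yv^i,\qquad Y\bg_{ij}=0.
\end{equation*}
So $Y(\bg_{\mu\nu})X^\mu X'^\nu$ only sees the $t$-components of $X,X'$ through $Y\bg_{00}$ and $Y\bg_{0i}$. Next I record the frame components. From $c\bT=\p_t+v^a\p_a$ and $e_4=\bT+\bN$, $e_3=\bT-\bN$, with $\bN$ tangent to $\Sigma_t$, we get
\begin{equation*}
L^0=\Lb^0=c^{-1},\qquad L^i=c^{-1}v^i+\bN^i,\qquad \Lb^i=c^{-1}v^i-\bN^i,\qquad e_A^0=0.
\end{equation*}
Here $e_A$ are tangent to $S_{t,u}$ and Euclidean-orthogonal to $\bN$.

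For the first group of identities in (\ref{10.6.1.22}) I substitute these components into
\begin{equation*}
Y(\bg)(X,X')=Y\bg_{00}X^0X'^0+Y\bg_{0i}\big(X^0X'^i+X'^0X^i\big).
\end{equation*}
\begin{itemize}
\item For $X=X'=L$: the $c^{-2}\cdot 2v^iYv^i$ part of $Y\bg_{00}$ cancels against the $v$-part of $-2c^{-1}Yv^iL^i$. What remains is $-2Y\log c-2c^{-1}\bN^iYv^i$.
\item For $X=X'=\Lb$: the same computation with $\bN\to-\bN$ gives $-2Y\log c+2c^{-1}\bN^iYv^i$.
\item For $e_A$: since $e_A^0=0$, only the cross term survives. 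This gives $-c^{-1}Yv^ie_A^i$ for both $L$ and $\Lb$, because $\pm\bN$ does not enter. It also gives $0$ for $(e_A,e_B)$.
\item The mixed term $(L,\Lb)$ is computed the same way, and I would recheck it against the normalization actually used downstream.
\end{itemize}

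For (\ref{4.9.1.26}) I use the coordinate formula
\begin{equation*}
\Ga_{\a\b\mu}=\tfrac12\big(\p_\a\bg_{\b\mu}+\p_\b\bg_{\a\mu}-\p_\mu\bg_{\a\b}\big).
\end{equation*}
Contracting it with $X^\a Y^\b Z^\mu$ gives
\begin{equation*}
\Ga_{XY}^Z=\tfrac12\big[(X\bg)(Y,Z)+(Y\bg)(X,Z)-(Z\bg)(X,Y)\big],
\end{equation*}
where $(W\bg)(\cdot,\cdot)$ means the differentiated components contracted with the frame. For $\Ga^\bN_{L\bN}$ the second and third terms cancel, leaving $\tfrac12(L\bg)(\bN,\bN)$. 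Writing $\bN=\tfrac12(L-\Lb)$ expands this as
\begin{equation*}
\tfrac18\big[(L\bg)(L,L)+(L\bg)(\Lb,\Lb)-2(L\bg)(L,\Lb)\big].
\end{equation*}
The $\bN^iLv^i$ terms cancel between the first two entries, so the result is a pure multiple of $L\log c$. For $\Ga^A_{L\bN}$ I expand $\tfrac12[(L\bg)(\bN,e_A)+(\bN\bg)(L,e_A)-(e_A\bg)(L,\bN)]$ the same way. The $Yv^ie_A^i$ contributions from the first two terms combine into $-\tfrac12c^{-1}(\bN v^i-Lv^i)e_A^i$-type expressions; these should vanish by irrotationality, $\p_iv^j=\p_jv^i$, together with $\bN\perp e_A$. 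The third term gives the $-\sn_A\log c$ piece through $e_A\log c=\sn_A\log c$.

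The main obstacle is bookkeeping rather than ideas. First, the coefficient of the mixed $(L,\Lb)$ term has to be pinned down, since it fixes the coefficient in $\Ga^\bN_{L\bN}$. I would first verify it by evaluating directly with $\bg(L,\Lb)=-2$. Second, the vorticity-dependent pieces in $\Ga^A_{L\bN}$ have to cancel, using $\fw=0$ and $\p_tv=-\nab(\ldots)$ from Bernoulli's law where a time derivative appears.
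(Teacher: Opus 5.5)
\textbf{There is a genuine gap, and it sits exactly in the step you defer.} Your component formulas are correct. So is the contraction identity $\Gamma^Z_{XY}=\frac12[(X\bg)(Y,Z)+(Y\bg)(X,Z)-(Z\bg)(X,Y)]$, and so are your checks of the $(L,L)$, $(\Lb,\Lb)$, $(L,e_A)$, $(\Lb,e_A)$ and $(e_A,e_B)$ identities. With your own components, the mixed term is
\[
Y(\bg)(L,\Lb)=c^{-2}\big(-2cYc+2v^iYv^i\big)-c^{-1}Yv^i\big[(c^{-1}v^i-\bN^i)+(c^{-1}v^i+\bN^i)\big]=-2Y\log c ,
\]
not $-6Y\log c$. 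For $v=0$ and $\bg=-c^2dt^2+dx^2$ this is immediate.

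Your derivation of \eqref{4.9.1.26} then breaks down. In $\frac18[(L\bg)(L,L)+(L\bg)(\Lb,\Lb)-2(L\bg)(L,\Lb)]$ the $\log c$ terms cancel, and the value $L\log c$ appears only if one inserts $-6$. No expansion is actually needed. Since $Y\bg_{ij}=0$ and $\bN^0=0$, you get $\Gamma^{\bN}_{L\bN}=\frac12(L\bg)(\bN,\bN)=0$. Consistently, $0=\bg(\mathbf{D}_L\bN,\bN)=\bN^iL\bN^i+\Gamma^{\bN}_{L\bN}$, with $\bN^iL\bN^i=\frac12L\big(\sum_i(\bN^i)^2\big)=0$.

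Likewise, in $\Gamma^A_{L\bN}$:
\begin{itemize}
\item the first term $(L\bg)(\bN,e_A)$ vanishes, so there is no $Lv^i$ contribution;
\item the third term $-(e_A\bg)(L,\bN)=c^{-1}\bN^ie_Av^i$ contains no $\log c$ at all, because $c$ enters only through $\bg_{00}$ and $\bN^0=0$.
\end{itemize}
Hence $\Gamma^A_{L\bN}=\frac12c^{-1}e_A^i\bN^j(\partial_iv^j-\partial_jv^i)=0$ for irrotational flow, not $-\sn_A\log c$. No Bernoulli input is needed, since no time component survives.

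So the obstacle is not bookkeeping. Under the stated definitions, the $(L,\Lb)$ identity and both identities in \eqref{4.9.1.26} are false, and no argument can prove them as written. The paper itself gives no proof here, only a pointer to \cite{Wang2024}, where the metric is $c^{-2}\bg$. The stated values $L\log c$ and $-\sn_A\log c$ are exactly what your expansion produces once the erroneous coefficient $-6$ is inserted.

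What your method does prove is $Y(\bg)(L,\Lb)=-2Y\log c$ and $\Gamma^{\bN}_{L\bN}=\Gamma^A_{L\bN}=0$. Together with the frame equations, this gives $L\bN^i=-\zb_Ae_A^i=(k_{A\bN}-\sn_A\log c)e_A^i$. This agrees with ray optics: for $v=0$, rays bend by $-\sn\log c$. It does not agree with the formula used in the proof of \eqref{4.7.1.26}, which has a nonzero $\bN$-component, incompatible with $|\bN|=1$. The resulting correction to \eqref{4.7.1.26} is $c\big(L\log c\,\bN^i-\sn_A\log c\,e_A^i\big)=O(\varepsilon\langle t\rangle^{-2})$ by \eqref{3.21.1.26}. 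So the downstream bound $\tir y=O(\varepsilon\log\langle t\rangle)$ still holds.
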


Define the second fundamental form
 \begin{equation*}
 k_{ij}=-\f12 \Lie_\bT \delta_{ij}, \qquad \Tr k =\delta^{ij} k_{ij},\qquad \hk_{ij}=k_{ij}-\frac{1}{3} \Tr k \delta_{ij}
 \end{equation*}
 where $\Lie_X$ denotes the Lie derivative by the vector field $X$.

We start with recalling the basic geometric set-up defined by using the null tetrad $\{L, \Lb, e_A, A=1,2\}$, which appeared in \cite{CK}.
Here $\{e_A, e_B\}$ with $A,B=1,2$ is the orthonormal basis of the tangent bundle on $S_{t,u}$.
 The null second fundamental forms $\chi$ and $\chib$,
the torsion $\zeta$, and the Ricci coefficient $\zb$ of the foliation $S_{t,u}$ are defined by
\begin{equation}\label{ricc_def}
\begin{split}
\chi_{AB}=\bg(\bd_A e_4, e_B), &\qquad \chib_{AB}=\bg (\bd_A e_3, e_B),\\
\zeta_A=\f12 \bg(\bd_3 e_4, e_A), &\qquad \zb_A=\f12 \bg (\bd_4 e_3, e_A).
\end{split}
\end{equation}

We also record the following identities from \cite{CK, KR1, Wang2024} under the null tetrad of time foliation
\begin{eqnarray}\label{5.7.1.26}
\begin{split}
\chib_{AB}=-\chi_{AB}-2 k_{AB}, && \zb_A=-k_{A\bN}+\sn_A \log  c\\
\xi_A=k_{A\bN}-\zeta_A+\sn_A\log c, &&\zeta_A=\sn_A\log\bb+k_{A\bN}.
\end{split}
\end{eqnarray}

 We denote by $\tr\chi$ and $\chih$ the trace and traceless part of $\chi$ taken by the metric $\ga$, and apply the same convention to $\chib$.
The second fundamental form of $S_{t,u}$ in $\Sigma_t^m$  is given by
\begin{equation}\label{7.15.7.19}
\theta(X, Y)=\l \nab_X \bN, Y\r
\end{equation}
for any vector fields $X, Y$ tangent to $S_{t,u}$. The trace of $\theta$ is defined by $\tr\theta=\ga^{AB} \theta_{AB}$, and the traceless part of $\theta$ is denoted by $\hat \theta$.

We adopt the metric induced by the null geodesic flow  for the spacetime in $D^+$
\begin{equation*}
-2\bb d u dt+\bb^2 du^2+\ga_{AB}(d \omega^A+\b^A du)(d \omega^B+\b^B du).
\end{equation*}
Note that $\bb \bN=\p_u-\b_A \frac{\p}{\p\omega^A}$, the shift $\b(t_0)=0$ at $\Sigma_{t_0}^m$ and 
 \begin{footnote}
{The formula can be proved directly by using (\ref{lb}) and (\ref{3.19.2}), which will be given shortly.} 
\end{footnote}
\begin{equation}\label{1.22.4.22}
[L, \b]=\bb(\zb-\zeta).
\end{equation}

 \begin{lemma}\label{6.7con}
  The following frame equations hold
\begin{eqnarray*}
\bd_4 e_4=-\bar k_{\bN\bN} e_4,&& \bd_A e_4 =\chi_{AB}e_B-k_{A\bN}
e_4\\
\bd_A e_3=\chib_{AB} e_B+k_{A\bN} e_3,&& \bd_4 e_3=2\zb_A
e_A+\bar k_{\bN\bN}e_3\\
\bd_3 e_4=2\zeta_A e_A+ \ud k_{\bN\bN}e_4, &&\bd_3 e_3=2\xi_A
e_A-\ud k_{\bN\bN} e_3\\
\bd_4 e_A=\sn_4 e_A+\zb_A e_4,&& \bd_3 e_A=\sn_3e_A+\zeta_A
e_3+\xi_A e_4\\
\bd_B e_A=\sn_B e_A+\f12 \chi_{AB} e_3+\f12 \chib_{AB} e_4&& 
\end{eqnarray*}
where $\bar k_{\bN X}=k_{\bN X}-X\log c$ and $\ud k_{\bN X}=k_{\bN X}+X \log c$ for $X\in \T \Sigma$.
\end{lemma}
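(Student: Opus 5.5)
The frame equations follow by expanding each covariant derivative $\bd_X e_Y$ in the null frame $\{e_4,e_3,e_A\}$. With the normalizations $\bg(e_4,e_3)=-2$, $\bg(e_4,e_4)=\bg(e_3,e_3)=0$ and $\bg(e_A,e_B)=\delta_{AB}$, any vector $V$ decomposes as
\[
V=-\tfrac12\bg(V,e_3)\,e_4-\tfrac12\bg(V,e_4)\,e_3+\bg(V,e_A)\,e_A .
\]
So each formula reduces to computing a few inner products $\bg(\bd_X e_Y,e_Z)$. Several of them are immediate from the definitions (\ref{ricc_def}). Others follow by metric compatibility: for instance $\bg(\bd_X e_4,e_4)=0$, and $\bg(\bd_X e_4,e_3)=-\bg(e_4,\bd_X e_3)$. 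The remaining quantities will be expressed through $k$ and $\log c$ using $e_4=\bT+\bN$, $e_3=\bT-\bN$ together with the identities (\ref{5.7.1.26}).

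The key auxiliary computation is $\bg(\bd_X\bT,Y)$ for $X,Y\in\{\bT,\bN,e_A\}$. I would write $\bT=c^{-1}(\p_t+v^a\p_a)$ and use $\bg^{-1}=-\bT\otimes\bT+\sum\p_a\otimes\p_a$. Because $\Sigma_t$ is flat Euclidean, the induced connection is $\nab$, and for tangential $X,Y$ the definition of $k$ gives
\[
\bg(\bd_X\bT,Y)=-k(X,Y).
\]
For the time component, $\bT$ is the unit normal to the level sets of $t$ with lapse $c$, i.e. $\bT=-c\,\bd t$. The standard lapse identity then yields $\bd_\bT\bT=\nab\log c$, so $\bg(\bd_\bT\bT,\bN)=\bN\log c$. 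Alternatively, I would obtain these from Lemma \ref{dg} and (\ref{4.9.1.26}) via the Koszul formula. For $\bN$, note that $\bN$ is the unit normal of $S_{t,u}$ in $\Sigma_t$, which gives $\bg(\nab_A\bN,e_B)=\theta_{AB}$ and hence $\chi=\theta-k$ restricted to $S$. The derivatives $\bd_\bT\bN$ will be controlled through $\bT(u)=-\bb^{-1}$ and the identity $\zeta_A=\sn_A\log\bb+k_{A\bN}$.

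With these in hand, I would run through the frame equations one at a time.

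\textbf{$\bd_4e_4$.} The components along $e_A$ vanish, because $e_4=\bb\hat L$ with $\hat L$ geodesic gives $\bd_4e_4=(e_4\log\bb)\,e_4$, which is parallel to $e_4$. It remains to identify $e_4\log\bb=-\bar k_{\bN\bN}$. I would obtain this from $\bb^{-1}=-\bT(u)$, differentiating along the geodesic and using $\bd_4\bT$.

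\textbf{$\bd_Ae_4$.} The $e_B$-component is $\chi_{AB}$ by definition. The $e_3$-component vanishes since $\bg(\bd_Ae_4,e_4)=0$. The $e_4$-component is $-\tfrac12\bg(\bd_Ae_4,e_3)$, which equals $-k_{A\bN}$ after expanding in $\bT,\bN$.

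\textbf{$\bd_Ae_3$.} Write $e_3=2\bT-e_4$. Then the $e_3$-coefficient is $k_{A\bN}$, and $\chib_{AB}=-\chi_{AB}-2k_{AB}$ is consistent with (\ref{5.7.1.26}).

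\textbf{$\bd_4e_3$, $\bd_3e_4$, $\bd_3e_3$.} The $e_A$-components are $2\zb_A$, $2\zeta_A$ and $2\xi_A$ by definition, with the last of these defined as in (\ref{5.7.1.26}). The null components reduce to $\bg(\bd_4e_3,e_4)$ and its analogues. These in turn reduce to $\bN\log c$ and $k_{\bN\bN}$, giving the coefficients $\pm(k_{\bN\bN}\mp\bN\log c)$, i.e. $\bar k_{\bN\bN}$ and $\ud k_{\bN\bN}$ respectively.

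\textbf{Remaining equations.} The formulas for $\bd_Xe_A$ follow purely by antisymmetry from the previous ones: $\bg(\bd_Xe_A,e_4)=-\bg(e_A,\bd_Xe_4)$, and likewise with $e_3$. The tangential part is by definition $\sn_Xe_A$ (the projection), and for $\bd_Be_A$ the null components are $\tfrac12\chi_{AB}$ and $\tfrac12\chib_{AB}$.

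The main obstacle is the bookkeeping of signs in the $\log c$ terms, which separate $\bar k$ from $\ud k$. These come from $\bd_\bT\bT=\nab\log c$ together with the non-geodesic nature of $\bT$. I would fix them by checking consistency with $\Ga^{\bN}_{L\bN}=L\log c$ and $\Ga^{A}_{L\bN}=-\sn_A\log c$ from (\ref{4.9.1.26}), and with $\bd_4e_4\parallel e_4$.
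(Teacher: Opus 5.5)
Your proposal is correct. The decomposition $V=-\tfrac12\bg(V,e_3)e_4-\tfrac12\bg(V,e_4)e_3+\bg(V,e_A)e_A$, together with $\bg(\bd_X\bT,Y)=-k(X,Y)$, $\bd_\bT\bT=\nab\log c$, metric compatibility, and $\bd_4e_4\parallel e_4$ (from $e_4=\bb\hat L$ with $\hat L$ geodesic), produces every line with the stated signs. The paper gives no proof of this lemma; it records the identities as standard from \cite{CK, KR1, Wang2024}. Your frame computation is exactly the standard derivation being cited.

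Some of your steps can be simplified:
\begin{itemize}
\item You never need $\bd_\bT\bN$, nor the identity $\zeta_A=\sn_A\log\bb+k_{A\bN}$. The tangential components are definitions, and every null component reduces by antisymmetry to $\bg(\bd_X\bT,\bN)$.
\item The $e_4$-coefficient of $\bd_4e_4$ can be read off directly, without going through $\bb$:
\[
-\tfrac12\bg(\bd_4e_4,e_3)=\bg(\bd_4\bT,\bN)=\bN\log c-k_{\bN\bN}=-\bar k_{\bN\bN}.
\]
This yields \eqref{lb} as a consequence rather than requiring it.
\item The consistency check against (\ref{4.9.1.26}) is unnecessary. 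Those are Cartesian Christoffel contractions, not frame coefficients, and the signs are already fixed by $\bd_\bT\bT=\nab\log c$ and $k=-\tfrac12\Lie_\bT\delta$.
\item If $\xi$ is defined through (\ref{5.7.1.26}) rather than as $\tfrac12\bg(\bd_3e_3,e_A)$, add the one-line check from $e_3=2\bT-e_4$:
\[
\tfrac12\bg(\bd_3e_3,e_A)=\bg(\bd_3\bT,e_A)-\zeta_A=\sn_A\log c+k_{A\bN}-\zeta_A.
\]
\end{itemize}
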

Using Lemma \ref{dg}, we have
\begin{lemma}[Crucial decomposition for second fundamental forms]\cite[Section 2]{Wang2024}\label{dcom_s}
Let  $\Xi_\mu=\Ga^\eta_{\a\b}(\bg) \bg^{\a\b}\bg_{\eta \mu}$, where $\Ga(\bg)$ is the Christoffel symbol of $\bg$.
Let $\Xi$ be the 1-form
\begin{equation}\label{ricc6.7.2}
\Xi_\ga=(\Ga_{\a\b}^\eta-{\hat \Ga}_{\a\b}^\eta)\bg^{\a\b}\bg_{\ga\eta},
\end{equation}
with $\hat\Ga $ being the Christoffel symbol of a smooth reference metric  $\hat\bg$, which is chosen to be the Minkowski metric $\bm$.
Relative to this frame, let $\eta_{AB}=k_{AB}$ and $\ep_A=k_{A\bN}$ and
 we decompose $\eta$ as
\begin{equation*}
\tr \eta=\delta^{CD}\eta_{CD}, \quad \eta_{AB}=\eh_{AB}+\f12 \delta_{AB}\tr \eta,
\end{equation*}
where $A,B,C,D=1,2$.

Then
\begin{align}
 c k_{ij}&=-\p_i v_j, \quad c\Tr k=-\div v\label{k1}\\
\Xi_\mu\bB^\mu&=\bB\log c-\div v, \, \Xi_j=-\p_j(\log c+\varrho)\label{6.14.1.19}\\
2\bT\varrho&=\wp^{-1}\{\Xi_L+L (\log c+\varrho)\},\quad \Xi_L=\wp \Lb \varrho \label{7.04.7.19} \\
\bar k_{\bN\bN}&=\f12 (\Xi_L-L(\log c+\varrho)-2c^{-1}L v_\bN)\label{7.04.8.19}\\
&=-\wp\bN \varrho-c^{-1}L v_\bN\label{3.22.1.21}\\
\displaybreak[0]
\bN v^i \bN^i&= -\bB\varrho-e_A(v^i) e_A^i, \quad  c^{-1}L v^i \bN^i + L\varrho=\tr\eta \label{7.04.9.19}\\
\displaybreak[0]
\f12(\tr\chi+\tr\chib)&=-(c^{-1}L v_\bN+L\varrho)\label{1.6.1.21}\\
c^{-1}\Lb v_\bN&=\Lb \varrho-(L\varrho+c^{-1}L v_\bN)\label{8.22.3.25}\\ 
c^{-1}Xv^i\Pi_{ij}e_A^j&=\pm k_{A\bN}- \sn \varrho=[\sn\Phi], \,X=L, \Lb, \begin{footnote}{ This identity will be frequently used without mentioning.} \end{footnote}  \label{6.18.1.26}
\end{align}
where $[Y\Phi]=Y\varrho, c^{-1}(Y v)_\bN$, with $Y=L, \Lb, \sn$.
\end{lemma}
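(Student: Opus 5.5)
All of the identities follow by direct computation in Cartesian coordinates. I would derive them in the order listed, so that each one can use the ones before it. The inputs are the explicit components of $\bg,\bg^{-1}$ recorded before Lemma \ref{dg}, the Euler system (\ref{4.23.1.19}), irrotationality $\p_iv_j=\p_jv_i$, and the relation $d\log c=(\wp-1)d\varrho$. The last relation holds because $c^{-1}c'\rho=\wp-1$ and $\varrho=\log(\rho/\bar\rho)$.

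\textbf{Step 1: (\ref{k1}).} From the component formulas, lowering $\bT=c^{-1}(\p_t+v^a\p_a)$ gives $\bT_\mu dx^\mu=-c\,dt$. Hence
$$k_{ij}=\l\bT,\bd_{\p_i}\p_j\r=-c\,\Ga^0_{ij}.$$
Since $\bg_{ij}=\delta_{ij}$, the term $\p_0\bg_{ij}$ vanishes. The $\bg^{0l}$ contributions also cancel: the coefficient $-c^{-2}v^l$ multiplies $\p_i\bg_{lj}=0$. This leaves $\Ga^0_{ij}=\tfrac{1}{2}c^{-2}(\p_iv_j+\p_jv_i)$. By irrotationality, $ck_{ij}=-\p_iv_j$, and taking the trace gives $c\Tr k=-\div v$.

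\textbf{Step 2: (\ref{6.14.1.19}) and (\ref{7.04.7.19}).} I use $\sqrt{|\det\bg|}=c$ and the fact that $\hat\Ga=0$ for $\bm$ in Cartesian coordinates. Then
$$\Xi_\ga=-c^{-1}\bg_{\ga\eta}\p_\a(c\,\bg^{\a\eta}).$$
Evaluating this with the explicit $\bg^{\a\eta}$ and substituting $\bB\varrho=-\div v$ and $\bB v=-c^2\nab\varrho$ should give $\Xi_j=-\p_j(\log c+\varrho)$ and $\Xi_\mu\bB^\mu=\bB\log c-\div v$.

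Consequently $\Xi_\bB=(\wp-1)\bB\varrho+\bB\varrho=\wp\bB\varrho$, so $\Xi_\bT=\wp\bT\varrho$. Also $\Xi_\bN=-\wp\bN\varrho$. Using $L=\bT+\bN$, the pairing with the lowered metric flips the $\bT$-sign, and I expect this to produce $\Xi_L=\wp(\bT-\bN)\varrho=\wp\Lb\varrho$. Adding $L(\log c+\varrho)=\wp L\varrho$ then gives $2\wp\bT\varrho$. The sign convention for $\Xi_L$ will be fixed consistently in this step.

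\textbf{Step 3: (\ref{7.04.8.19})--(\ref{6.18.1.26}).} For $\bar k_{\bN\bN}$, I pair the frame equation $\bd_4e_4=-\bar k_{\bN\bN}e_4$ from Lemma \ref{6.7con} with $e_3$, using $\l e_4,e_3\r=-2$. I then compute $\l\bd_LL,\Lb\r$ via the Koszul formula, with metric derivatives supplied by Lemma \ref{dg}, that is, (\ref{10.6.1.22}) with $Y=L,\Lb$. I expect the outcome to be expressible through $\Xi_L$, $L\log c$ and $c^{-1}Lv_\bN$, which is (\ref{7.04.8.19}). Inserting $\Xi_L=\wp\Lb\varrho$ and $\Lb=L-2\bN$ then gives (\ref{3.22.1.21}).

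For (\ref{7.04.9.19}), I split $\div v=\bN v^i\bN^i+e_Av^ie_A^i$ and use $\bB\varrho=-\div v$. With (\ref{k1}), I have
$$\tr\eta=\delta^{AB}k_{AB}=-c^{-1}e_Av^ie_A^i.$$
Writing $L=c^{-1}\bB+\bN$ and using $\bB v=-c^2\nab\varrho$ gives $c^{-1}Lv_\bN=-\bN\varrho+c^{-1}\bN v_\bN$. Substituting the first identity in (\ref{7.04.9.19}) then yields $c^{-1}Lv_\bN+L\varrho=\tr\eta$.

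Identity (\ref{1.6.1.21}) follows from $\chib_{AB}=-\chi_{AB}-2k_{AB}$ in (\ref{5.7.1.26}), which gives $\tfrac{1}{2}(\tr\chi+\tr\chib)=-\tr\eta$. Identity (\ref{8.22.3.25}) follows from $\Lb=2\bT-L$ and $c^{-1}\bT v_\bN=-\bN\varrho$.

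Finally, for (\ref{6.18.1.26}) I compute the two pieces separately. Since $\bT v=-c\nab\varrho$, we get $c^{-1}\bT v^ie^i_A=-\sn_A\varrho$. From (\ref{k1}) and symmetry, $c^{-1}\bN v^ie^i_A=-k_{A\bN}$. The $\pm$ signs then come from $L=\bT+\bN$ and $\Lb=\bT-\bN$.

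\textbf{Main obstacle.} The hardest step is the $\bar k_{\bN\bN}$ computation, because Koszul terms from the non-flat, $v$-dependent metric and the normalization of $e_4=\bb\hat L$ interact. The safest route is to work with the unit frame $\bT\pm\bN$ directly, rather than with $\hat L$, since $\bb$ drops out of $\bd_4e_4$ after normalization. Throughout, tracking the signs in $\Xi$ and in the lowered $\bT$ requires care.
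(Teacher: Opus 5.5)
Your proposal is correct, and it is organized differently from the paper's proof. The paper does not recompute \eqref{k1}--\eqref{7.04.9.19}. It imports them from \cite[Section 2]{Wang2024}, where they are derived for the rescaled metric $c^{-2}\bg$, and it derives only \eqref{1.6.1.21}, \eqref{8.22.3.25}, \eqref{6.18.1.26} directly. For those three your arguments are essentially the paper's: the trace of $\chib_{AB}=-\chi_{AB}-2k_{AB}$; $c^{-1}\bT v_\bN=-\bN\varrho$ combined with the frame relation; and $\bT v=-c\nabla\varrho$ together with the symmetry of $\p_iv_j$.

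Your self-contained Cartesian derivation of the first group checks out:
\begin{itemize}
\item $\bT_\mu dx^\mu=-c\,dt$ gives $ck_{ij}=-\frac12(\p_iv_j+\p_jv_i)$.
\item With $\sqrt{|\det\bg|}=c$ and $V^\eta=\p_\alpha(c\,\bg^{\alpha\eta})$, one finds $V^0=c^{-2}\bB c-c^{-1}\div v$.
\item After using $\bB v^i=-c^2\p_i\varrho$, one finds $V^i=v^iV^0+c\,\p_i(\log c+\varrho)$. This yields exactly $\Xi_j=-\p_j(\log c+\varrho)$ and $\Xi_\mu\bB^\mu=cV^0=\bB\log c-\div v$.
\end{itemize}
Your route buys transparency. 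It shows where the Euler equations, irrotationality and $d\log c=(\wp-1)d\varrho$ enter. It also avoids the conformal bookkeeping hidden in the citation: $\Xi$ shifts by a multiple of $d\log c$ under $\bg\mapsto c^{-2}\bg$, and the null frame is renormalized. The paper's route buys brevity.

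Two steps can be tightened.
\begin{itemize}
\item In Step 2 there is no sign convention to fix, and nothing flips on the $\bT$ side. By linearity $\Xi_L=\Xi_\bT+\Xi_\bN=\wp\bT\varrho-\wp\bN\varrho=\wp\Lb\varrho$; the minus sign comes from $\Xi_j=-\p_j(\log c+\varrho)$.
\item In Step 3 the Koszul computation is unnecessary, because Lemma \ref{6.7con} defines $\bar k_{\bN\bN}=k_{\bN\bN}-\bN\log c$. By \eqref{k1} this equals $-c^{-1}\bN v^i\bN^i-(\wp-1)\bN\varrho$. Your own identity $c^{-1}Lv_\bN=-\bN\varrho+c^{-1}\bN v^i\bN^i$ turns this into \eqref{3.22.1.21}. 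Then \eqref{7.04.8.19} follows from $\Xi_L-L(\log c+\varrho)=-2\wp\bN\varrho$.
\end{itemize}

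If you still compute $\langle\bd_LL,\Lb\rangle$ via Lemma \ref{dg}, do not use the printed $L\Lb$ entry of \eqref{10.6.1.22}. Its value $-6Y\log c$ is inconsistent with the $LL$ and $\Lb\Lb$ entries. Together with $Y(\bg_{ij})=0$, those entries force $Y(\bg_{\mu\nu})L^\mu\Lb^\nu=-2Y\log c$. With $-2$ one gets $\langle\bd_LL,\Lb\rangle=-2\bN\log c-2c^{-1}\bN v^i\bN^i=2\bar k_{\bN\bN}$, consistent with the definition.
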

\begin{proof}
 For the derivation of 
(\ref{k1})-(\ref{7.04.9.19}), one may refer to \cite[Section 2]{Wang2024}, noting that the metric used there is $c^{-2}\bg$. 
(\ref{1.6.1.21}) follows by using (\ref{7.04.9.19}) and $\f12(\tr\chi+\tr\chib)=-\tr\eta$. 

Due to (\ref{7.04.9.19}) and (\ref{4.23.1.19}), 
\begin{equation*}
c^{-1}\bT v_{\bN}=-\bN \varrho; c^{-1}\bN v_{\bN}=-\bT\varrho+\tr\eta=-\bT \varrho+L\varrho+c^{-1}L v_\bN.
\end{equation*}
This gives (\ref{8.22.3.25}). 

 (\ref{6.18.1.26}) is derived by using (\ref{4.23.1.19}) and the fact that the Euler flow is irrotational. 
 \end{proof}
Using Lemma \ref{6.7con}, we have the following formula
\begin{lemma}
Let $h=\f12 \tr\chi$ and $\hb=\f12 \tr\chib$. For a scalar function $f$, there holds
\begin{align}
\Box_\bg f&=\sD f-L \Lb f-(h-\bar k_{\bN\bN})\Lb f-\hb L f+2\zb^A \sn_A f.\label{6.30.2.19}
\end{align}
where $\sD$ is the Laplace-Beltrami operator of the induced metric $\ga$ on $S_{t,u}$.
\end{lemma}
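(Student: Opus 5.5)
We need to prove the null-frame decomposition of the wave operator, (\ref{6.30.2.19}):
\[
\Box_\bg f=\sD f-L \Lb f-(h-\bar k_{\bN\bN})\Lb f-\hb L f+2\zb^A \sn_A f .
\]
The approach is to write $\Box_\bg f$ as the trace of the Hessian $\bd^2 f$ in the null tetrad $\{e_4=L, e_3=\Lb, e_A\}$ and then evaluate each component with the frame equations of Lemma \ref{6.7con}.

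\textbf{Step 1 (inverse metric in the frame).} Since $e_3+e_4=2\bT$ and $e_4-e_3=2\bN$, we have $\l e_3,e_4\r=-2$, $\l e_3,e_3\r=\l e_4,e_4\r=0$, and $e_A\perp e_3,e_4$. Hence
\[
\bg^{-1}=-\tfrac12(e_3\otimes e_4+e_4\otimes e_3)+\sum_A e_A\otimes e_A ,
\]
and therefore
\[
\Box_\bg f=-\tfrac12\big(\bd^2f(e_3,e_4)+\bd^2f(e_4,e_3)\big)+\sum_A\bd^2 f(e_A,e_A).
\]
Because $\bd$ is torsion free, the Hessian is symmetric, so $\Box_\bg f=-\bd^2 f(e_4,e_3)+\sum_A \bd^2 f(e_A,e_A)$.

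\textbf{Step 2 (mixed term).} Write $\bd^2f(e_4,e_3)=e_4e_3 f-(\bd_4 e_3)f$. By Lemma \ref{6.7con}, $\bd_4e_3=2\zb_Ae_A+\bar k_{\bN\bN}e_3$. This gives
\[
-\bd^2f(e_4,e_3)=-L\Lb f+2\zb^A\sn_A f+\bar k_{\bN\bN}\Lb f .
\]

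\textbf{Step 3 (angular terms).} Write $\bd^2 f(e_A,e_A)=e_Ae_A f-(\bd_Ae_A)f$ and use $\bd_Be_A=\sn_Be_A+\f12\chi_{AB}e_3+\f12\chib_{AB}e_4$ with $B=A$. Summing over $A$, the terms $e_Ae_Af-(\sn_Ae_A)f$ assemble to $\sD f$, and the remaining terms are $-\f12\tr\chi\,\Lb f-\f12\tr\chib\, Lf=-h\Lb f-\hb Lf$.

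\textbf{Step 4 (combine).} Adding Steps 2 and 3 yields exactly (\ref{6.30.2.19}).

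The only real obstacle is bookkeeping. First, the normalization $\l L,\Lb\r=-2$ must be used correctly, since otherwise the factor in front of $L\Lb f$ and the $\zb$ coefficient come out wrong. Second, the order $L\Lb$ versus $\Lb L$ matters: it fixes whether the connection term is $\bd_4e_3$ (involving $\zb$ and $\bar k_{\bN\bN}$) or $\bd_3 e_4$ (involving $\zeta$ and $\ud k_{\bN\bN}$). Choosing $\bd^2 f(e_4,e_3)$ in Step 1 is what produces the stated form. As a check, one can verify the sign of the $\bar k_{\bN\bN}$ term against $\bd_4e_4=-\bar k_{\bN\bN}e_4$.
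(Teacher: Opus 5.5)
Your proposal is correct and takes the same route as the paper, which obtains (\ref{6.30.2.19}) directly from the frame equations of Lemma \ref{6.7con}. You trace $\bd^2 f$ against $\bg^{-1}=-\tfrac12(e_3\otimes e_4+e_4\otimes e_3)+\sum_A e_A\otimes e_A$, then use $\bd_4 e_3=2\zb_A e_A+\bar k_{\bN\bN}e_3$ for the mixed term and $\bd_A e_A=\sn_A e_A+\f12\chi_{AA}e_3+\f12\chib_{AA}e_4$ for the angular terms; with $\l e_3,e_4\r=-2$ and the ordering $\bd^2 f(e_4,e_3)$, the signs and coefficients come out exactly as stated.
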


\begin{proposition}\label{ric44}
 The following important decomposition for $\bR_{44}$ holds true,
\begin{equation}\label{3.7.6.21}
\bR_{44}=L(\Xi_4)-\bar k_{\bN\bN}\Xi_4+ \N(\Phi, \bp \Phi)
\end{equation}
where $\Xi_4=\Xi_\mu e_4^\mu$ with $\Xi_\mu$ defined in (\ref{ricc6.7.2}), and $\N(\Phi, \bp \Phi)$ symbolically represents a set of scalars, taking the following form 
\begin{equation}\label{10.8.1.22}
\N(\Phi, \bp\Phi)=\sum_{\vs(Y_1 Y_2)=0,2} [Y_1\Phi][Y_2\Phi]+|\eh|^2,
\end{equation}
where $Y_1, Y_2$ are in the null tetrad $\{e_4, e_3, e_A, A=1,2\}$, $[Y\Phi]=Y\varrho, c^{-1}(Y v)_\bN$,  and
$\vs(Y)$ is the signature of $Y$ for which we have basic calculation rule:
$$\vs(Y_1 Y_2)=\vs(Y_1)+\vs(Y_2), \, \vs(\Lb):=-1, \vs(\sn):=0, \vs(L):=1.$$ 
\end{proposition}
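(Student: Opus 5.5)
We compute $\bR_{44}=\bR_{\mu\nu}e_4^\mu e_4^\nu$ from the standard formula for the Ricci tensor in harmonic-gauge-type form relative to the reference Minkowski metric $\bm$. Since $\bg$ depends only on $\Phi=(v,\varrho)$ (through $c(\varrho)$ and $v$), and $\Ga-\hat\Ga$ is a tensor, we write
\[
\bR_{\mu\nu}=-\f12\bg^{\a\b}\p_\a\p_\b\bg_{\mu\nu}+\f12\big(\bd_\mu\Xi_\nu+\bd_\nu\Xi_\mu\big)+Q_{\mu\nu}(\bg;\p\bg,\p\bg),
\]
where $\Xi$ is the 1-form in (\ref{ricc6.7.2}) and $Q$ is quadratic in $\p\bg$, hence quadratic in $\bp\Phi$. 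Contracting with $e_4e_4$ and using $\bd_4e_4=-\bar k_{\bN\bN}e_4$ from Lemma \ref{6.7con}, we get $(\bd_4\Xi)_4=L(\Xi_4)-\Xi(\bd_4e_4)=L(\Xi_4)+\bar k_{\bN\bN}\Xi_4$. The sign in (\ref{3.7.6.21}) indicates that the correct normalization, and the choice of $e_4$ versus $\hat L$ with the lapse factor $\bb$, must be tracked. Part of the $\bar k_{\bN\bN}\Xi_4$ term will also be produced by the principal term below, so the final coefficient $-\bar k_{\bN\bN}$ emerges only after collecting terms. I will simply track all terms of the form $\bar k_{\bN\bN}\Xi_4$.

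The principal term $-\f12\bg^{\a\b}\p_\a\p_\b\bg_{\mu\nu}e_4^\mu e_4^\nu$ is handled as follows. The quantity $\bg^{\a\b}\p_\a\p_\b \bg_{\mu\nu}$ equals $\Box_\bg\bg_{\mu\nu}$ (componentwise) up to $\Xi\cdot\p\bg$. The components $\bg_{\mu\nu}$ are functions of $(c,v)$, so by the chain rule $\Box_\bg\bg_{\mu\nu}$ equals derivatives of $\bg$ contracted with $\Box_\bg v^i$ and $\Box_\bg\varrho$, plus $\bg^{\a\b}\p_\a\Phi\p_\b\Phi$ terms. Using (\ref{4.10.1.19})--(\ref{4.10.2.19}), $\Box_\bg\Phi=\sQ$ is itself null-form quadratic: $\bg^{\a\b}\p_\a\Phi\p_\b\Phi$ decomposes in the null frame as $-\f12([L\Phi][\Lb\Phi]+[\Lb\Phi][L\Phi])+[\sn\Phi]^2$, all of signature $0$. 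Contracting with $e_4e_4$ via Lemma \ref{dg}, I expect the term $-2c^{-1}\bN^i\Box v_i-2\Box\log c$ to have signature $0$ or $2$. The quadratic $Q_{44}$ terms are of the form $(\p\bg)_{4\cdot}(\p\bg)_{4\cdot}$. Expanding them in the frame with Lemma \ref{dg} and the identities (\ref{k1})--(\ref{6.18.1.26}) gives products $[Y_1\Phi][Y_2\Phi]$. One must check that no $[\Lb\Phi]^2$ or $[\Lb\Phi][\sn\Phi]$ term (negative signature) survives. Components like $k_{AB}$ enter through $\eh$, which is why $|\eh|^2$ appears; $\tr\eta$ is rewritten via (\ref{7.04.9.19}) as $L$-derivatives.

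The main obstacle is the signature bookkeeping: showing that every $\Lb$-derivative in $\bR_{44}$ is either absorbed into $L(\Xi_4)$ or paired with an $L$-derivative. I would avoid the coordinate route where possible and instead compute $\bR_{44}$ intrinsically. I would use the Raychaudhuri-type identity $\bR_{44}=-L\tr\chi-\f12(\tr\chi)^2-|\chih|^2-\bar k_{\bN\bN}\tr\chi$, which follows from Lemma \ref{6.7con}. I would then express $\tr\chi$ through (\ref{1.6.1.21}) and $\chib=-\chi-2k$ as $\tr\chi=-\tr\eta-(c^{-1}Lv_\bN+L\varrho)+\dots$. After that I would compare with $\Xi_4=\Xi_L$, which equals $\wp\Lb\varrho$ by (\ref{7.04.7.19}), together with the relation (\ref{7.04.8.19}) for $\bar k_{\bN\bN}$.

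Concretely, the step I would try first is as follows. Compute $\bR_{44}$ directly as $e_4^\mu e_4^\nu(\p_\a\Ga^\a_{\mu\nu}-\p_\mu\Ga^\a_{\a\nu}+\Ga\Ga)$ in Cartesian coordinates, using that $\Ga$ is linear in $\bp\Phi$ with coefficients smooth in $\Phi$. Identify the second-order part with $L(\Xi_4)$ plus the term $e_4^\mu e_4^\nu\p_\mu\p_\nu(\log\sqrt{|\bg|})$-type contribution. Since $\sqrt{|\bg|}=c$, this contribution equals $L^2\log c$, which combines with $\Xi_4$ through (\ref{6.14.1.19}). Finally, collect all first-order products and verify case by case with Lemma \ref{dg} that they lie in $\N(\Phi,\bp\Phi)$.
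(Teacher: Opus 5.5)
Your opening framework is the paper's own. You use the reduced-Ricci decomposition relative to $\bm$, contract with $e_4e_4$, compute $(\bd_4\Xi)_4=L(\Xi_4)+\bar k_{\bN\bN}\Xi_4$, and dispose of the $\Box_\bg\bg_{44}$ part via the chain rule and (\ref{4.10.1.19})--(\ref{4.10.2.19}). \textbf{Gap:} the step that actually decides the statement, how $+\bar k_{\bN\bN}\Xi_4$ becomes $-\bar k_{\bN\bN}\Xi_4$, is left open, and the check you propose for it is false.

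It is not a normalization issue. With $\hat L=\bb^{-1}e_4$ and $\bd_{\hat L}\hat L=0$, using (\ref{lb}) you get $\bb^{-2}(L(\Xi_4)+\bar k_{\bN\bN}\Xi_4)$, the same coefficient. The flip comes from the quadratic terms. By Lemma \ref{dg} with $Y=\Lb$, together with (\ref{3.22.1.21}) and (\ref{8.22.3.25}), one has $e_4^\mu e_4^\nu\Lb\bg_{\mu\nu}=-4\bar k_{\bN\bN}+[L\Phi]$-terms. Two terms are then each $-2\bar k_{\bN\bN}^2+\N(\Phi,\bp\Phi)$:
\begin{itemize}
\item the $\bg^{34}\bg^{34}$-component of $-\f12\bg^{\b\a}\bg^{\sigma\ga}\p_\b\bg_{4\sigma}\p_\ga\bg_{4\a}$, which is $-\frac18(e_4^\mu e_4^\nu\Lb\bg_{\mu\nu})^2$;
\item the term $-\f12\Xi^\a\p_\a\bg_{44}$, which is the $\Xi\cdot\p\bg$ correction you incur in passing from $\bg^{\a\b}\p_\a\p_\b$ to $\Box_\bg$.
\end{itemize}
All other quadratic products have signature $0$ or $2$. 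Since $\bar k_{\bN\bN}=\frac{\wp}{2}\Lb\varrho-\frac{\wp}{2}L\varrho-c^{-1}Lv_\bN$, these two terms contain $(\Lb\varrho)^2$. So negative-signature products do survive in your $Q_{44}$, and the check ``no $[\Lb\Phi]^2$ survives'' would fail.

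The missing identification is (\ref{7.04.8.19}) together with $\log c=\mathrm{const}+(\wp-1)\varrho$. These give $2\bar k_{\bN\bN}=\Xi_4-\wp L\varrho-2c^{-1}Lv_\bN$, hence $-4\bar k_{\bN\bN}^2=-2\bar k_{\bN\bN}\Xi_4+\N(\Phi,\bp\Phi)$. Therefore $\bar k_{\bN\bN}\Xi_4-2\bar k_{\bN\bN}\Xi_4=-\bar k_{\bN\bN}\Xi_4$, and everything else falls into $\N$.

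Your two fallback routes do not work as described. The Raychaudhuri identity (\ref{s1}) holds for any null foliation. It expresses $\bR_{44}$ through $L\tr\chi$, a second derivative of the acoustical function $u$ rather than of $\Phi$. (\ref{1.6.1.21}) only gives $\tr\chi+\tr\chib$, so $\tr\chi$ cannot be traded for $\bp\Phi$. In the paper the logic runs the other way: Proposition \ref{ric44} is inserted into (\ref{s1}) to obtain (\ref{6.3.1.23}).

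In the direct coordinate computation, note that $\sqrt{|\det\bg|}=c$ gives $\Xi_\nu=\bg^{\a\b}\p_\a\bg_{\b\nu}-\p_\nu\log c$. So the $-\p_\mu\p_\nu\log c$ coming from $-\p_\mu\Ga^\a_{\a\nu}$ is already contained in $\f12(\p_\mu\Xi_\nu+\p_\nu\Xi_\mu)$, and adding a separate $L^2\log c$ double counts. The genuinely remaining second-order term, $-\f12\bg^{\a\b}\p_\a\p_\b\bg_{44}$, is not part of $L(\Xi_4)$. It must be removed via the wave equations, exactly as in your first paragraph.
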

\begin{remark}
The decomposition of $\bR_{44}$ of the type in (\ref{3.7.6.21}) was first given in \cite{K-commu} and in \cite[Chapter 4]{shock_demetrios}  in the context of fluids. One can refer to  \cite[Section 2]{Wang2024} for the same result of the metric $c^{-2}\bg$.  Here, as in \cite[Section 2]{Wang2024}, it is crucial to verify the precise form of the lower-order quadratic terms on the right-hand side. 
\end{remark}
\begin{remark}
In this paper, we do not rely on the precise formula of $\N(\Phi, \bp\Phi)$ in (\ref{10.8.1.22}). In view of (\ref{8.22.3.25}) and (\ref{6.18.1.26}), we only need the more general symbolic formula, which does not have to be scalar either, 
\begin{equation}\label{6.26.1.26}
\N(\Phi, \bp\Phi)=\Lb \varrho \c \bar \bp\Phi+\bar \bp \Phi\c \bar\bp \Phi
\end{equation}
where $\bar \bp\Phi=c^m(\sn\Phi, L\Phi)$ with $m\in\mathbb Z$, and the powers of $c$ multiplying the components of $\bar \bp \Phi$ may differ. 
\end{remark}
 \begin{proof}
We first recall that relative to any coordinate, the Ricci curvature of $\bg$ can be decomposed as
\begin{equation}\label{ricc6.7.1}
\bR_{\a\b}=-\f12 \Box_\bg (\bg_{\a\b})+\f12 (\bd_\a \Xi_\b+\bd_\b \Xi_\a)+S_{\a\b},
\end{equation}
where $\Xi$ is the 1-form defined in (\ref{ricc6.7.2}).
The term
$S_{\a\b}$ is quadratic in $\bp \bg$, (see \cite[Page 25]{Andersson_Moncrief})
\begin{equation}\label{s44}
\begin{split}
S_{\mu\nu}&=\f12 \bg^{\b\a}\bg^{\sigma \ga}(\p_\nu \bg_{\a \ga}\p_\sigma \bg_{\mu \b}+\p_\mu \bg_{\b \sigma} \p_\ga \bg_{\nu\a}
-\f12 \p_\nu \bg_{\a\ga}\p_\mu \bg_{\b\sigma}\\
&+\p_\b \bg_{\mu\sigma}\p_\a\bg_{\nu \ga}-\p_\b \bg_{\mu\sigma} \p_\ga\bg_{\nu\a})-\f12 \Xi^\a\p_\a \bg_{\mu\nu}.
\end{split}
\end{equation}

Similar to \cite[Proposition 7.5]{rough_fluid}, we derive
\begin{align*}
c^2\Box_\bg \bg_{\a\b}e_4^\a e_4^\b&=\Box_\bg \bg_{\a\b}(\bB+c\bN)^\a(\bB+c\bN)^\b\\
&=\Box_\bg \bg_{\a\b}\bB^\a \bB^\b+2c\Box_\bg \bg_{i\b}\bN^i\bB^\b+c^2\Box_\bg \bg_{i j}\bN^i \bN^j \\
&=\Box_\bg \bg_{00}+2\Box_\bg \bg_{0i}v^i+2c\Box_\bg \bg_{i0}\bN^i\\
&=\Box_\bg (-c^2+|v|^2)-2\Box_\bg v^i(v^i+c\bN^i)\\
&=-2c\Box_\bg c-2\bd_\a c\bd^\a c+2\bd^\a v^i \bd_\a v^i-2c\Box_\bg v_i \bN^i.
\end{align*}
Note 
\begin{equation*}
c^{-1}\Box_\bg c=c^{-1}\bd^\a(c\bd_\a\log c)=\Box_\bg \log c+\bd^\a\log c\bd_\a \log c=(\wp-1)\Box_\bg \varrho+\bd^\a\log c\bd_\a \log c.
\end{equation*}
Combining the above two calculations gives
\begin{align*}
\Box_\bg \bg_{\a\b} e_4^\a e_4^\b=-2(\wp-1)\sQ^0-2c^{-1}\sQ(\bN)-4\sC(\log c, \log c)+2c^{-2}\sC(v^i,v^i)
\end{align*}
where the bilinear form $\sC(f_1, f_2):=\bd^\a f_1 \bd_\a f_2$.

Substituting the above formula to (\ref{ricc6.7.1}), we derive in view of (\ref{4.10.1.19}), (\ref{4.10.2.19}) and Lemma \ref{6.7con} that
\begin{equation}\label{3.7.7.21}
\bR_{44}-(L(\Xi_4)+\bar k_{\bN\bN} \Xi_4+S_{44})=c^{-2}(\sG(v^i, v^j)\delta_{ij}+c\sG(\varrho, v^i) \bN^i)+\sG(\varrho, \varrho),
\end{equation}
where we neglected the particular values of constant coefficients of the right-hand side.

Similar to \cite[Proposition 2.9]{Wang2024}, we use Lemma \ref{dg} to treat $S_{44}$. Also using  (\ref{6.18.1.26}), the first line in (\ref{s44}) contributes the terms $$\sum_{\vs(Y_1 Y_2)=0,2} [Y_1 \Phi][Y_2\Phi].$$

 Note that applying the first line in (\ref{10.6.1.22}) to $Y=\Lb$, also using (\ref{3.22.1.21}) and (\ref{7.04.9.19}), we have
\begin{equation*}
e_4^\mu e_4^\nu \Lb \bg_{\mu\nu}=-4\bar k_{\bN\bN}-2(c^{-2}\bN^j e_4 v^i +e_4 \log c).
\end{equation*}
With the help of the above identity, we derive
\begin{align*}
\begin{split}
-\frac{1}{2}e_4^\mu e_4^\nu \bg^{\la s} \bg^{\rho s'}\p_\rho \bg_{s\mu}\p_\la \bg_{s' \nu}
&=-\f12 e_4^\mu e_4^\nu  \bg^{34}\bg^{34} e_3 \bg_{\a\mu} e_3 \bg_{\b\nu} e_4^\a e_4^\b+\N(\Phi, \bp \Phi)\\
&=-\frac{1}{8} (e_4^\nu e_4^s e_3 \bg_{s\nu})^2+\N(\Phi, \bp \Phi)\\
&=-2\bar k_{\bN\bN}^2+\N(\Phi, \bp \Phi)
 \end{split}
\end{align*}
and we have,  also using (\ref{6.14.1.19}) and (\ref{7.04.8.19}), that
\begin{align*}
-\f12\Xi^\a \p_\a \bg_{\mu\nu}e_4^\mu e_4^\nu=\f12 \bar k_{\bN\bN} e_3 \bg_{44}+\N(\Phi, \bp\Phi)=-2\bar k_{\bN\bN}^2+\N(\Phi, \bp\Phi).
\end{align*}
Moreover, in view of (\ref{7.04.9.19}) and (\ref{6.18.1.26})
\begin{equation*}
  e_4^\mu e_4^\nu\bg^{\b\a}\bg^{\sigma\ga}\p_\b \bg_{\mu\sigma}\p_\a\bg_{\nu\ga} = \sum_{\vs(Y_1 Y_2)=0,2} [Y_1 \Phi][Y_2\Phi]+c^{-2}\sn_A v_B\c\sn_A v_B=\N(\Phi, \bp\Phi).
\end{equation*}
Hence, we conclude
\begin{equation*}
S_{44}=-4(\bar k_{\bN\bN})^2+\N(\Phi, \bp\Phi).
\end{equation*}
(\ref{3.7.6.21}) follows by combining (\ref{7.04.8.19}), (\ref{3.7.7.21}) with the above identity. 
\end{proof}

 We recall from \cite{CK, KR1, Wang2024} the basic set of equations for the null structure.
\begin{align}
&L \bb=-\bb \bar k_{\bN\bN}, \label{lb}\\
&L\tr\chi+\f12 (\tr\chi)^2=-|\chih|^2-{\bar k}_{\bN\bN} \tr\chi-\bR_{44}. \label{s1} \displaybreak[0]
\end{align}

And the following commutation formula holds
\begin{equation}
\f12[L, \Lb]=(\zb^A-\zeta^A) e_A +\f12(\bar k_{\bN\bN} \Lb-\ud k_{\bN\bN} L).\label{3.19.2}
\end{equation}
Using Proposition \ref{ric44} and Lemma \ref{dcom_s}, we derive from (\ref{s1}) that
\begin{lemma}[Normalized structural equations](\cite[Lemma 3.3]{Wang2024})\label{7.03.2.26}
\begin{equation}\label{6.3.1.23}
\begin{split}
 L \tr\chi+\f12 (\tr\chi)^2
&=-|\chih|^2-\widetilde{L \Xi_4}-\tr\chi(\bar k_{\bN\bN}-\f12 \Xi_4)+\N(\Phi, \bp \Phi),
\end{split}
\end{equation}
where
\begin{align}
\widetilde{L \Xi_4}&=L (\Xi_4)+(h-\bar k_{\bN\bN})\Xi_4=\wp\left(\sD \varrho+2\zb^A \sn_A \varrho-\Box_\bg \varrho+(h+\tr\eta)L\varrho\right).\label{3.20.1.22}
\end{align}
\end{lemma}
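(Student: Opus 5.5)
This follows by substituting the decomposition of Proposition \ref{ric44} into the Raychaudhuri equation (\ref{s1}) and then rewriting the term $L(\Xi_4)$ with the wave operator formula (\ref{6.30.2.19}) applied to $f=\varrho$.

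\textbf{Step 1: the first identity (\ref{6.3.1.23}).} Insert (\ref{3.7.6.21}) into (\ref{s1}). This gives
\[
L\tr\chi+\f12(\tr\chi)^2=-|\chih|^2-\bar k_{\bN\bN}\tr\chi-L(\Xi_4)+\bar k_{\bN\bN}\Xi_4+\N.
\]
With $h=\f12\tr\chi$, we have $-L\Xi_4=-\widetilde{L\Xi_4}+(h-\bar k_{\bN\bN})\Xi_4$. The two terms $\pm\bar k_{\bN\bN}\Xi_4$ then cancel, and the leftover $h\Xi_4=\f12\tr\chi\,\Xi_4$ combines with $-\bar k_{\bN\bN}\tr\chi$ to give $-\tr\chi(\bar k_{\bN\bN}-\f12\Xi_4)$. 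This is exactly (\ref{6.3.1.23}), and $\N$ is unchanged.

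\textbf{Step 2: the formula (\ref{3.20.1.22}) for $\widetilde{L\Xi_4}$.}
\begin{itemize}
\item By (\ref{7.04.7.19}), $\Xi_4=\Xi_L=\wp\Lb\varrho$, and $\wp$ is constant. Hence
\[
\widetilde{L\Xi_4}=\wp\bigl(L\Lb\varrho+(h-\bar k_{\bN\bN})\Lb\varrho\bigr).
\]
\item Solving (\ref{6.30.2.19}) for $L\Lb\varrho+(h-\bar k_{\bN\bN})\Lb\varrho$ gives
\[
\sD\varrho-\hb L\varrho+2\zb^A\sn_A\varrho-\Box_\bg\varrho.
\]
\item It remains to write $-\hb=h+\tr\eta$. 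This follows from the identity $\f12(\tr\chi+\tr\chib)=-\tr\eta$ used in the proof of Lemma \ref{dcom_s}, i.e.\ $h+\hb=-\tr\eta$. That identity in turn comes from $\chib_{AB}=-\chi_{AB}-2k_{AB}$ in (\ref{5.7.1.26}) together with $\eta_{AB}=k_{AB}$.
\end{itemize}

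\textbf{Main obstacle.} The only delicate point is making sure the sign conventions are consistent. Specifically, one must check that (\ref{6.30.2.19}) has exactly the coefficient $(h-\bar k_{\bN\bN})$ in front of $\Lb f$, which is what allows the cancellation in Step 2. One must also check that $\Xi_4$ means $\Xi_L$ with $L=e_4$, with no extra factor of $c$.

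If a factor of $c$ did appear, it would produce extra terms of the form $L\log c\cdot\Lb\varrho$. These are of type $\Lb\varrho\c\bar\bp\Phi$, so they are absorbed into $\N$ by the symbolic form (\ref{6.26.1.26}). The identity therefore holds in either case.
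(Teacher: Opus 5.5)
Your proposal is correct and follows essentially the paper's route: substitute (\ref{3.7.6.21}) into (\ref{s1}) and regroup to get (\ref{6.3.1.23}), then combine $\Xi_4=\Xi_L=\wp\Lb\varrho$ from (\ref{7.04.7.19}), the null-frame formula (\ref{6.30.2.19}) with $f=\varrho$, and $h+\hb=-\tr\eta$ to obtain (\ref{3.20.1.22}). Your closing hedge about a possible extra factor of $c$ is unnecessary, since (\ref{7.04.7.19}) holds exactly with $L=e_4$; the hedge would also not rescue (\ref{3.20.1.22}) as an exact identity, because that equation has no $\N$ term to absorb such contributions.
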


Next we prove 
\begin{lemma}
Setting $\widehat\Box_\bg f= -(L\Lb f-(\sD f-h\Lb f+h Lf))$ for scalar functions $f$, we have
    \begin{equation}\label{eqn_density}
 -\widehat\Box_\bg \varrho=\frac{\wp}{2}(\Lb \varrho)^2+(\frac{3\wp}{2}-1)|\sn\varrho|^2-(1-\frac{\wp}{2})\sC[\varrho, \varrho]+2c^{-1}\sC[\varrho, v]_\bN-\Box_\bg \varrho,
    \end{equation}
where, for scalar functions $f$, $g$ and vector-field $V$, 
\begin{align*}
\sC[f,  g]=\bg^{\mu\nu}\p_\mu f\p_\nu g; \quad\sC[ f,  V]_\bN=\bg^{\mu\nu}\p_\mu f\p_\nu V_i\c\bN^i.
\end{align*}
Symbolically, 
\begin{equation}\label{eqn_den}
-\widehat\Box_\bg\varrho=\f12\wp(\Lb\varrho)^2+\N(\Phi, \bp\Phi).
\end{equation}

\end{lemma}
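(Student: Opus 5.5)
The plan is to start from the null-frame decomposition (\ref{6.30.2.19}) of $\Box_\bg\varrho$ and compare it with the definition of $\widehat\Box_\bg$. By definition
\[
-\widehat\Box_\bg\varrho=L\Lb\varrho-\sD\varrho+h\Lb\varrho-hL\varrho .
\]
From (\ref{6.30.2.19}) we have
\[
L\Lb\varrho-\sD\varrho=-\Box_\bg\varrho-(h-\bar k_{\bN\bN})\Lb\varrho-\hb L\varrho+2\zb^A\sn_A\varrho .
\]
Substituting this gives
\[
-\widehat\Box_\bg\varrho=-\Box_\bg\varrho+\bar k_{\bN\bN}\Lb\varrho-(\hb+h)L\varrho+2\zb^A\sn_A\varrho .
\]
Thus the $h\Lb\varrho$ terms cancel, and the task reduces to expressing the three remaining lower-order terms as quadratic forms in derivatives of $\Phi$.

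Each of these terms is handled with Lemma \ref{dcom_s}.

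First, by (\ref{3.22.1.21}), $\bar k_{\bN\bN}=-\wp\bN\varrho-c^{-1}Lv_\bN$. Writing $\bN=\f12(L-\Lb)$ gives
\[
\bar k_{\bN\bN}\Lb\varrho=\tfrac{\wp}{2}(\Lb\varrho)^2-\tfrac{\wp}{2}L\varrho\,\Lb\varrho-c^{-1}Lv_\bN\,\Lb\varrho .
\]
This is the source of the Riccati coefficient $\frac{\wp}{2}(\Lb\varrho)^2$.

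Second, by (\ref{1.6.1.21}), $h+\hb=-(c^{-1}Lv_\bN+L\varrho)$. Hence
\[
-(h+\hb)L\varrho=(L\varrho)^2+c^{-1}Lv_\bN L\varrho .
\]

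Third, by (\ref{5.7.1.26}), $\zb_A=-k_{A\bN}+\sn_A\log c=-k_{A\bN}+(\wp-1)\sn_A\varrho$, and by (\ref{6.18.1.26}) the component $k_{A\bN}$ is a combination of $\sn\varrho$ and $c^{-1}Xv\cdot e_A$. So $2\zb^A\sn_A\varrho$ is a combination of $|\sn\varrho|^2$ and $c^{-1}\sn_A\varrho\,(Xv)_A$.

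To reach the precise form (\ref{eqn_density}), I would rewrite the bilinear forms in the null frame. Since $\bg^{-1}=-\f12(L\otimes\Lb+\Lb\otimes L)+\sum_A e_A\otimes e_A$, we have
\[
\sC[\varrho,\varrho]=-L\varrho\Lb\varrho+|\sn\varrho|^2,
\qquad
\sC[\varrho,v]_\bN=-\tfrac12\big(L\varrho\,\Lb v_\bN+\Lb\varrho\,Lv_\bN\big)+\sn_A\varrho\,\sn_Av_\bN .
\]
The mixed term $\Lb v_\bN$ is converted through (\ref{8.22.3.25}): $c^{-1}\Lb v_\bN=\Lb\varrho-(L\varrho+c^{-1}Lv_\bN)$. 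The angular derivatives $\sn_A v_\bN$ are converted through (\ref{6.18.1.26}) together with the irrotationality $\p_iv_j=\p_jv_i$, which gives $(\sn_Av)_\bN=(\bN v)_A$.

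With these substitutions the constants should match: $(L\varrho)^2$ and the $L\varrho\Lb\varrho$ terms are absorbed into $-(1-\frac{\wp}{2})\sC[\varrho,\varrho]+2c^{-1}\sC[\varrho,v]_\bN$, and the leftover $|\sn\varrho|^2$ carries coefficient $\frac{3\wp}{2}-1$.

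The symbolic form (\ref{eqn_den}) then follows immediately. Every term other than $\frac{\wp}{2}(\Lb\varrho)^2$ contains at least one factor from $\bar\bp\Phi$, and $\Box_\bg\varrho=\sQ^0$ by (\ref{4.10.2.19}), so it is also of type $\N$ by (\ref{6.26.1.26}). More precisely, the terms $c^{-1}\Lb v_\bN$ in $\sQ^0$ are reduced through (\ref{8.22.3.25}). Their $(\Lb\varrho)^2$ contributions must be checked: in $\sQ^0$ the $\Lb\Lb$ component of $\bg^{-1}$ vanishes, so no $(\Lb\varrho)^2$ appears there.

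The main obstacle is the bookkeeping of exact constants in (\ref{eqn_density}), especially the angular terms involving $k_{A\bN}$ and $\sn v$, where the sign conventions in (\ref{6.18.1.26}) and (\ref{5.7.1.26}) must be tracked. For the rest of the paper only (\ref{eqn_den}) matters. That version requires just two checks: the coefficient of $(\Lb\varrho)^2$ is exactly $\frac{\wp}{2}$, coming solely from $\bar k_{\bN\bN}\Lb\varrho$, and no other $(\Lb\Phi)^2$ terms survive. These hold by the above, since $\Lb v_\bN$ always appears multiplied by an $L$ or $\sn$ derivative.
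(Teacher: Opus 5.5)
Your proposal is correct and follows essentially the same route as the paper. Both start from \eqref{6.30.2.19}, substitute $\bar k_{\bN\bN}$, $h+\hb$ and $\zb$ via Lemma \ref{dcom_s}, and repackage using the null-frame form of $\bg^{-1}$ and \eqref{8.22.3.25}; the constants you anticipate ($\frac{3\wp}{2}-1$ and $-(1-\frac{\wp}{2})$) do come out, and the paper simply writes $\zb_A=c^{-1}\sn_A v_\bN+(\wp-1)\sn_A\varrho$ directly from \eqref{5.7.1.26} and \eqref{k1}, so $2\zb^A\sn_A\varrho$ cancels the angular part of $2c^{-1}\sC[\varrho,v]_\bN$ and leaves $2(\wp-1)|\sn\varrho|^2$ without your detour through \eqref{6.18.1.26} and its sign conventions.
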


\begin{proof}
Using (\ref{8.22.3.25}), we first deduce
\begin{align}\label{8.22.1.25}
\begin{split}
-2 c^{-1}\sC[\varrho,  v]_\bN&=c^{-1}(L\varrho \Lb v_\bN+\Lb\varrho L v_\bN)-2 c^{-1}\sn \varrho \sn v_\bN\\
&=L\varrho\left(\Lb \varrho-(L\varrho+c^{-1}L v_\bN)\right)+c^{-1}\Lb \varrho L v_\bN-2\sn \varrho \zb_A+ 2\sn\log c \sn \varrho
\end{split}
\end{align} 
where we used $\zb_A=c^{-1}\sn_A v_\bN+\sn \log c$. 

Applying (\ref{6.30.2.19}) to $f=\varrho$ and using (\ref{3.22.1.21})-(\ref{1.6.1.21})  and (\ref{8.22.1.25}), we have
\begin{align}
&L\Lb \varrho-(\sD \varrho-h \Lb \varrho+h L\varrho)\nn\\
&=(c^{-1}L v_\bN+L\varrho)L \varrho-(\wp \bN \varrho+c^{-1}L v_\bN) \Lb \varrho+2\zb^A \sn \varrho-\Box_\bg \varrho\nn\\
&=(c^{-1}L v_\bN+L\varrho)L\varrho+\f12\wp (\Lb \varrho)^2-(c^{-1}L v_\bN+\f12\wp L \varrho)\Lb \varrho+2\zb^A \sn \varrho-\Box_\bg \varrho\nn\\
&=\frac{\wp}{2}(\Lb \varrho)^2+(1-\frac{\wp}{2})\Lb\varrho L\varrho+2(\wp-1)|\sn\varrho|^2+2 c^{-1}\sC[\varrho,  v]_\bN-\Box_\bg \varrho,\nn
\end{align}
which implies (\ref{eqn_density}).
(\ref{eqn_den}) then follows immediately in view of  (\ref{4.10.2.19}).
\end{proof}
We are ready to derive important transport equations for the physical radiation field.
\begin{proposition}\label{7.03.3.26}
    Let $z= \tir(\Lb\varrho-h\varrho)$. We have
\begin{align}
Lz&=\tir (\sD \varrho-\widehat\Box_\bg \varrho)+z(c^{-1}\tir^{-1}-h)-\tir\varrho(Lh+h^2),\label{12.15.1.25}\\
L(\bb z)&=\bb\tir(\sD\varrho+\N(\Phi, \bp\Phi))+\bb \left(z(c^{-1}\tir^{-1}-h)-\tir \varrho(Lh+h^2)\right)\nn\\
&+\frac{\wp}{2}h\varrho \bb(z+\tir h\varrho )+\bb z[L\Phi].\label{nml-eqn}
\end{align}
\end{proposition}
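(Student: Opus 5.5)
Both identities come from differentiating $z=\tir(\Lb\varrho-h\varrho)$ along $L$ and then substituting the definition of $\widehat\Box_\bg$, the equation (\ref{eqn_den}) for $\varrho$, and the transport equation (\ref{lb}) for $\bb$.

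\emph{Computing $L\tir$.} First I need $L\tir$ with $\tir=t+u$. Since $u$ is constant along the integral curves of $cL=\p_t$ by (\ref{11.27.1.23}), we have $Lu=0$ and $Lt=c^{-1}$, so $L\tir=c^{-1}$. The Leibniz rule then gives
\begin{equation*}
Lz=c^{-1}(\Lb\varrho-h\varrho)+\tir\big(L\Lb\varrho-(Lh)\varrho-hL\varrho\big).
\end{equation*}

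\emph{Eliminating $L\Lb\varrho$.} By the definition of $\widehat\Box_\bg$,
\begin{equation*}
L\Lb\varrho=-\widehat\Box_\bg\varrho+\sD\varrho-h\Lb\varrho+hL\varrho.
\end{equation*}
Substituting this, the two $hL\varrho$ terms cancel. The term $c^{-1}(\Lb\varrho-h\varrho)$ equals $c^{-1}\tir^{-1}z$. I rewrite $-\tir h\Lb\varrho$ as $-hz-\tir h^2\varrho$. Collecting terms gives exactly (\ref{12.15.1.25}).

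\emph{The equation for $\bb z$.} Next I compute $L(\bb z)=\bb Lz+zL\bb$, using $L\bb=-\bb\bar k_{\bN\bN}$ from (\ref{lb}). In $\bb Lz$ I replace $-\widehat\Box_\bg\varrho$ by $\f12\wp(\Lb\varrho)^2+\N(\Phi,\bp\Phi)$ via (\ref{eqn_den}). The factor $(\Lb\varrho)^2$ is not admissible in the symbolic form (\ref{6.26.1.26}), since it has signature $-2$, so I expand it using $\Lb\varrho=\tir^{-1}z+h\varrho$:
\begin{equation*}
\tfrac12\wp\,\tir(\Lb\varrho)^2=\tfrac12\wp\,\tir^{-1}z^2+\tfrac{\wp}{2}h\varrho\,(z+\tir h\varrho).
\end{equation*}
The second piece is exactly the displayed term $\frac{\wp}{2}h\varrho\bb(z+\tir h\varrho)$ in (\ref{nml-eqn}).

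\emph{Cancelling the Riccati term.} The quadratic piece $\f12\wp\,\tir^{-1}z^2$ (times $\bb$) must cancel against $-\bb\bar k_{\bN\bN}z$. By (\ref{3.22.1.21}), $\bar k_{\bN\bN}=-\wp\bN\varrho-c^{-1}Lv_\bN$. Writing $\bN=\f12(L-\Lb)$ gives
\begin{equation*}
-\bar k_{\bN\bN}=-\tfrac{\wp}{2}\Lb\varrho+\tfrac{\wp}{2}L\varrho+c^{-1}Lv_\bN.
\end{equation*}
The last two terms are of the form $[L\Phi]$. The first, $-\frac{\wp}{2}\Lb\varrho\,\bb z$, becomes $-\frac{\wp}{2}\bb\tir^{-1}z^2-\frac{\wp}{2}h\varrho\bb z$ after the same substitution.

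This bookkeeping is the main obstacle: one has to check the coefficients so that the $\tir^{-1}z^2$ terms cancel exactly. The leftover $h\varrho\bb z$ contributions from $\Lb\varrho$ then either combine with the displayed term or are absorbed into $\bb\tir\N(\Phi,\bp\Phi)$ and $\bb z[L\Phi]$, since $h\varrho z/\tir\cdot\tir$ has the structure $\Lb\varrho\cdot\bar\bp\Phi$ up to these terms. I would verify this at the level of the symbolic conventions (\ref{6.26.1.26}), where the constant coefficients are not tracked.

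Finally I group the remaining terms $\bb\big(z(c^{-1}\tir^{-1}-h)-\tir\varrho(Lh+h^2)\big)$ and $\bb\tir\sD\varrho$ to arrive at (\ref{nml-eqn}).
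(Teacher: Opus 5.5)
For (\ref{12.15.1.25}) your derivation is correct and is the paper's computation. For (\ref{nml-eqn}) your overall route is also the paper's: $L(\bb z)=\bb Lz-\bb\bar k_{\bN\bN}z$, then (\ref{3.22.1.21}) with $\bN=\f12(L-\Lb)$, and (\ref{eqn_den}). However, the one step with real content is miscomputed. From $\Lb\varrho=\tir^{-1}z+h\varrho$ one gets
\[
\tfrac12\wp\,\tir(\Lb\varrho)^2=\tfrac12\wp\,\tir^{-1}z^2+\wp\,h\varrho\,z+\tfrac{\wp}{2}\tir h^2\varrho^2 .
\]
This is not $\tfrac12\wp\,\tir^{-1}z^2+\tfrac{\wp}{2}h\varrho(z+\tir h\varrho)$: you dropped half of the cross term.

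That slip is what produces your unmatched leftover $-\tfrac{\wp}{2}h\varrho\bb z$. Your fallback of absorbing it into $\bb\tir\N(\Phi,\bp\Phi)$ or $\bb z[L\Phi]$ does not work, for two reasons:
\begin{itemize}
\item $h\varrho$ is an undifferentiated $\varrho$ times the expansion $h$. It is not a factor of type $\bar\bp\Phi=c^m(\sn\Phi,L\Phi)$ or $[L\Phi]$, so $h\varrho\, z$ does not have the structure $\tir\Lb\varrho\cdot\bar\bp\Phi$.
\item The term $\frac{\wp}{2}h\varrho\bb(z+\tir h\varrho)$ in (\ref{nml-eqn}) carries an exact coefficient, so the convention that constants are not tracked does not cover it.
\end{itemize}
In size such a term would be harmless for (\ref{5.6.1.26}), but it would not give the identity as stated. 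You flagged this bookkeeping as the main obstacle and then hedged (``either combine \dots\ or are absorbed'') instead of carrying it out.

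With the correct expansion nothing needs absorbing. The $\tir^{-1}z^2$ terms cancel as you say, and the remaining pieces combine exactly:
\[
\wp h\varrho\bb z-\tfrac{\wp}{2}h\varrho\bb z+\tfrac{\wp}{2}\bb\tir h^2\varrho^2=\tfrac{\wp}{2}h\varrho\bb(z+\tir h\varrho).
\]
Meanwhile $\bb z(\tfrac{\wp}{2}L\varrho+c^{-1}Lv_\bN)$ is precisely the $\bb z[L\Phi]$ term.

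The paper avoids the detour through $z^2$ altogether. It writes
\[
-\tfrac{\wp}{2}\bb z\Lb\varrho=-\tfrac{\wp}{2}\bb\tir(\Lb\varrho)^2+\tfrac{\wp}{2}\bb\tir h\varrho\Lb\varrho ,
\]
so the $(\Lb\varrho)^2$ terms cancel directly against the one coming from (\ref{eqn_den}). The remaining term is $\tfrac{\wp}{2}\bb\tir h\varrho\Lb\varrho=\tfrac{\wp}{2}h\varrho\bb(z+\tir h\varrho)$.
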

\begin{proof}
Using the fact that $cL\tir=1$, we derive
\begin{align*}
L\left((\Lb\varrho-h \varrho)\tir\right)
&=L(\Lb\varrho- h\varrho) \tir+(\Lb\varrho-h\varrho) c^{-1}\\
&=\tir(\sD \varrho-\widehat\Box_\bg \varrho-h \Lb \varrho-Lh\varrho)+(\Lb\varrho-h\varrho) c^{-1}\\
&=\tir (\sD \varrho-\widehat\Box_\bg \varrho)+(\Lb \varrho-h\varrho)(c^{-1}-\tir h)-\tir\varrho(Lh+h^2)
\end{align*} 
which gives (\ref{12.15.1.25}).

Using  (\ref{lb}) and (\ref{3.22.1.21}), we derive
\begin{align*}
L(\bb z)&=\bb Lz+z L\bb=\bb Lz-\bb\bar k_{\bN\bN}z\\
&=\bb Lz+\bb z(\wp \bN \varrho+c^{-1}L v_\bN)\\
&=\bb Lz-\frac{\wp}{2}(\Lb\varrho)^2 \bb\tir +h\varrho \bb \tir(\frac{\wp}{2}\Lb\varrho-\frac{\wp}{2}L\varrho-c^{-1}L v_\bN)+\Lb \varrho \bb \tir(\frac{\wp}{2}L\varrho+c^{-1}L v_\bN). 
\end{align*}
(\ref{nml-eqn}) then follows by straightforward calculation with the help of (\ref{eqn_den}) and (\ref{12.15.1.25}).
\end{proof}
\section{Decay and comparison estimates} \label{decay}
In this section, we first give a preliminary result 
in Proposition \ref{local}, where we present the local energy estimates in $D_0$ and list the values of important geometric quantities of the $u$-foliation at $\Sigma_{t_0}^m$. 
\begin{proposition}\label{local} On the annulus $\Sigma_{t_0}^m$, the following hold
\begin{equation*}
\tir=\bar c^{-1}r, y'=y=0, \bb=\bar c, \theta=0, \la^i=0.
\end{equation*}
Recall that $\Phi=(v^i, \varrho)$ and $t_0=5 \bar c^{-1}$.  Let $l_0\in \mathbb N$ be fixed and sufficiently large. 
 There exists $\ve_0>0$, depending only on $f, g$ in (\ref{wave}), their derivatives and the constant state $\bar \rho>0$,  such that, if $\ve$ in (\ref{wave}) satisfies $0<\ve\le \ve_0$,  the following estimates hold in $D_0$ for $0<t\le t_0$, 
\begin{equation}\label{4.6.0.26}
\|\bp \Phi\|^2_{H^{l+1}(\Sigma_t)}\les \|\bp \Phi\|^2_{H^{l+1}(\Sigma_0)},\,\, l\le l_0, \quad\bp^{\le l_0}\Phi=O(\ve), \quad  c-\bar c=O(\ve).
\end{equation}
\end{proposition}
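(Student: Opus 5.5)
We split the statement into two parts: the geometric identities on $\Sigma_{t_0}^m$, which follow directly from how the foliation is initialized, and the energy and pointwise bounds in $D_0$, which follow from standard local well-posedness on a time interval of fixed length $t_0=5\bar c^{-1}$.

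\emph{Geometric values at $t=t_0$.} By construction $u=\bar c^{-1}r-t_0$ on $\Sigma_{t_0}^m$. Hence $\tir=t_0+u=\bar c^{-1}r$. The gradient $\nab u$ is radial, so $\bN=\p_r=x/r$, which gives $y'=0$. Since the initial data are supported in $\{|x|\le 1\}$ and $\Sigma_{t_0}^m$ lies in $\{r\ge \bar c t_0-1=4\}$, finite speed of propagation (for small $\ve$ the sound speed stays close to $\bar c$) gives $v=0$ and $c=\bar c$ there. Then $\bT=\bar c^{-1}\p_t$, and the eikonal equation \eqref{optical} gives $\bT u=-|\nab u|=-\bar c^{-1}$, so $\bb=\bar c$. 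Consequently $x/(c\tir)=x/r$ and $y=y'=0$. The level sets $S_{t_0,u}$ are round Euclidean spheres with unit normal $\p_r$, so $\la^a=\l {}\rp{a}\O,\p_r\r=0$. For $\theta$, I expect the paper's normalization to treat the second fundamental form relative to the round sphere (i.e.\ $\hat\theta$ or $\tr\theta-2/r$), and under that reading it vanishes trivially. I would state this convention explicitly rather than claim $\theta=0$ literally.

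\emph{Local estimates in $D_0$.} Write the system \eqref{4.23.1.19} as a symmetric hyperbolic system in $\Phi=(v,\varrho)$, equivalently \eqref{wave} in $\phi$. The data satisfy $\|\bp\Phi(0)\|_{H^{l_0+3}}\les\ve$ by \eqref{12.29.4.25}, since $f,g$ are smooth and compactly supported. The key steps are as follows.
\begin{enumerate}
\item Standard $H^s$ energy estimates for quasilinear symmetric hyperbolic systems, commuted with $\p^\a$, $|\a|\le l+1$, give
\[
\frac{d}{dt}\|\bp\Phi\|_{H^{l+1}}^2\les \|\bp\Phi\|_{L^\infty}\|\bp\Phi\|_{H^{l+1}}^2
\]
via Kato--Ponce commutator estimates. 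The constants depend only on bounds for $c$ and its inverse, which hold while $\varrho$ is small.
\item Run a bootstrap on $[0,t_0]$ with the assumption $\|\bp\Phi\|_{L^\infty}+|\Phi|\le \ve^{1/2}$. Gronwall on this fixed interval gives $\|\bp\Phi(t)\|_{H^{l+1}}^2\le e^{C\ve^{1/2}t_0}\|\bp\Phi(0)\|_{H^{l+1}}^2$, which is the first claim in \eqref{4.6.0.26}. Sobolev embedding in $\mathbb R^3$ with $l_0$ large then yields $\bp^{\le l_0}\Phi=O(\ve)$, which closes the bootstrap for $\ve\le\ve_0$. Continuation of smooth solutions past $t_0$ follows from the same bound.
\item The undifferentiated $\Phi$ is recovered by integrating $\p_t\Phi$ in time from $\Phi(0)=O(\ve)$. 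Finally $c-\bar c=O(\ve)$ because $c$ is a smooth function of $\varrho$ with $c(0)=\bar c$.
\end{enumerate}

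The main point needing care is that $\ve_0$ depends only on the data profile and $\bar\rho$, and that the constants do not degenerate. This holds because $t_0$ is a fixed number independent of $\ve$, and because $c$ stays bounded above and below, so the system remains uniformly hyperbolic throughout the bootstrap. No decay estimates are needed at this stage.
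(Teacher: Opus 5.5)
Your local-in-time part is fine and matches the paper. The paper invokes standard local energy estimates for the wave system \eqref{4.10.1.19}--\eqref{4.10.2.19} plus Sobolev embedding on the fixed interval $[0,t_0]$. Your symmetric hyperbolic formulation of \eqref{4.23.1.19}, with a bootstrap and Gronwall, is an equivalent standard route. Your caveat about $\theta$ is also correct: with $\bN=\p_r$ the spheres $S_{t_0,u}$ are round, so $\theta=r^{-1}\ga$ and only $\hat\theta=0$ holds.

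\textbf{The false step.} The geometric part relies on the claim that finite speed of propagation gives $v=0$ and $c=\bar c$ on $\Sigma^m_{t_0}$. This is wrong. For data supported in $\{|x|\le 1\}$, finite speed only gives $\Phi=0$ for $r\ge \bar c t_0+1=6$. But $\Sigma^m_{t_0}=\{4\le r\le 6\}$ is exactly where the outgoing wave lives: by the strong Huygens principle $W(t_0)$ is supported there, and $F_0(\omega,q)$ is supported in $|q|\le 1$. If $\Phi$ did vanish on $\Sigma^m_{t_0}$, then, since $D^+$ lies in the domain of dependence of $\{t=t_0,\ r\ge 4\}$, the solution would be the constant state throughout $D^+$. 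Then $T^e_\ve=\infty$, contradicting Theorem \ref{main} for any nontrivial $(f,g)$.

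\textbf{What survives.} Most of the listed identities still hold, because, as in the paper, they follow from the initialization $u=\bar c^{-1}r-t_0$ alone. The relation $\nab u=\bar c^{-1}\p_r$ gives $\bN=\p_r$, $y'=0$, $\la^a=0$ and $\tir=\bar c^{-1}r$. The eikonal equation with $\bg^{-1}=-\bT\otimes\bT+\sum_a\p_a\otimes\p_a$ gives $(\bT u)^2=|\nab u|^2=\bar c^{-2}$, hence $\bb=\bar c$. This does not need $\bT=\bar c^{-1}\p_t$, which is false there.

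\textbf{What fails.} Your argument for $y=0$ genuinely uses $c=\bar c$. On $\Sigma^m_{t_0}$ one has exactly $y=\bN-\frac{x}{c\tir}=\big(1-\frac{\bar c}{c}\big)\frac{x}{r}$, which vanishes only where $\varrho=0$. What the setup actually yields is $y=O(\ve)$, using $c-\bar c=O(\ve)$ from the second half of the proposition. This weaker statement is all that is needed later, when \eqref{4.7.1.26} is integrated from $t_0$ to obtain $\tir y=O(\ve\log\l t\r)$ in Proposition \ref{7.9.4.26}.
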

\begin{proof}
The quantities at $\Sigma_{t_0}^m$ can be derived by the set-up of the $u$-foliation therein. (\ref{4.6.0.26}) can be obtained by the standard local energy estimates for the system (\ref{4.10.1.19}) and (\ref{4.10.2.19}), and Sobolev embedding. We omit the details here for simplicity.
\end{proof}


Note that $\Sigma_{t_0}^m$ is an annulus with $-\bar c^{-1}\le u\le \bar c^{-1}$, i.e. $-1\le q\le 1$, while $q\in [-5, 1]$ on $\Sigma_{t_0}$. Therefore, the annulus lies at the outer $1/3$ of $\Sigma_{t_0}$ upto the exterior boundary $r=r_M$.  This thin annulus scenario matches with the geometric setup for the result of \cite[Theorem 17.1]{Miao_thesis}. Due to Proposition \ref{local}, we can apply the result of \cite[Theorem 17.1]{Miao_thesis} to obtain
\begin{proposition}\label{decay_ricci_coef} There exists a sufficiently small $\ve_0>0$ \begin{footnote}{This $\ve_0>0$ is no greater than the one appearing in Proposition \ref{local}. In the subsequent analysis, we may further shrink $\ve_0>0$ so that the required smallness on $\ve>0$ in  comparison or approximation results can be satisfied. Note that those smallness requirement still only depend on universal upper-bounds. We continue to use the notation $\ve_0$.}\end{footnote} such that, for $\ve$ in (\ref{wave}) satisfying $0<\ve\le \ve_0$, the smooth local solution in $D_0$ admits a unique smooth extension to $D^+$, for $t_0\le t< T_\ve^e$,  satisfying the following properties:
\begin{enumerate}
\item  on $\Sigma_t^m$, the following estimates hold 
\begin{align}
&|c\tr\chi-\frac{2}{\tir}, \chih, \chibh|\les\ve \l t\r^{-2}(\log  t+1)\nn\\
&|\bb \Lb \Phi, \p_u \Phi,  \Phi|\les  \ve \l t\r^{-1}, \quad  \l t\r|\sn\Phi, L\Phi|+|\Omega\Phi|\les\ve  \l t\r^{-1}, \quad |\slashed{\Delta}\Phi|\les \ve \l t\r^{-3} \label{3.21.1.26}\\
&|\bb-\bar c|\les \ve (\log t+1), \quad |\sn\bb|\les \ve \l t\r^{-1}(\log  t+1)\nn\\
&|\bb^2 \bp^2\Phi|\les\ve \l t\r^{-1},\nn
\end{align}
\item
 if $T_\ve^e<\infty$, then $\inf_{\Sigma_t^m} \bb\rightarrow 0$ as $t\rightarrow T_\ve^e$, 
\item the above estimate for $\tr\chi$, together with $\bb>0$, also justifies the transported pullback coordinates used in the geometric setup. 
 Prior to shock formation, every point of $S_{t,u}$ lies on a unique outgoing null geodesic originating from $S_{t_0,u}$, while a shock point is reached as a future endpoint of such a null geodesic.
\end{enumerate}
\end{proposition}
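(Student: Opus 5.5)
This proposition is a recasting of \cite[Theorem 17.1]{Miao_thesis}. The plan is therefore to verify that our data on $\Sigma_{t_0}^m$ satisfy its hypotheses and to translate its conclusions into the notation of Section \ref{causal}. We do not re-derive the a priori estimates.

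\textbf{Step 1: the initial slice.} On $\Sigma_{t_0}^m$ we check the set-up required in \cite{Miao_thesis}. The annulus $-1\le q\le 1$ is thin and lies at distance $\approx \bar c t_0=5$ from the axis. The foliation there is radial, so $\bN=\p_r$, $\bb=\bar c$, $\la=0$ and $\theta=0$, as recorded in Proposition \ref{local}. For $r\ge r_M$ the state is the constant state. The smallness hypothesis of that theorem bounds a finite number of weighted derivatives of $\Phi$ and of the fluid variables on $\Sigma_{t_0}^m$ by a small constant. This follows from (\ref{4.6.0.26}): on $D_0$ we have $\bp^{\le l_0}\Phi=O(\ve)$ and $c-\bar c=O(\ve)$, and we choose $l_0$ at least the number of derivatives used in \cite{Miao_thesis}. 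Since $t_0$ is fixed, all weights in $t$ and $r$ are bounded there. Irrotationality is propagated, so the potential formulation (\ref{wave}) applies, and this is the setting of \cite{Miao_thesis}.

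\textbf{Step 2: translating the conclusions.} The main step is matching notation. In \cite{Miao_thesis} the acoustical metric is conformal to ours, the time function is normalized differently, and the inverse density of foliation $\mu$ is used in place of $\bb$. We check that, with our normalization $\bb^{-1}=-\bT u$, the two lapses agree up to factors $c^{\pm1}$ equal to $\bar c^{\pm1}+O(\ve)$. Then the bootstrap-closed bounds of \cite[Theorem 17.1]{Miao_thesis} for $\mu$, for $\tr\chi$ minus its Minkowskian value, for $\chih$ and for derivatives of the fluid variables, expressed with the commutators $L$, $\Lb$ (weighted by $\mu$) and $\O$, become the bounds in (\ref{3.21.1.26}).

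Two points need attention in this conversion. First, the Minkowskian value $2/r$ must be replaced by $2/(c\tir)$. We plan to do this using $\tir=t+u$ and $c L\tir=1$, together with the transport equation (\ref{s1}) started from $\tr\chi=2/r=2/(\bar c\tir)$ at $t_0$. Second, the logarithmic losses in $\bb$ and $\sn\bb$ come from integrating (\ref{lb}), where $\bar k_{\bN\bN}=O(\ve\l t\r^{-1})$ by (\ref{3.22.1.21}).

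Where a bound is not stated verbatim, we would derive it from stated ones. For example, $|\sD\Phi|\les \ve\l t\r^{-3}$ should follow from the bound on $\O^{\le 2}\Phi$ and $|\Omega\Phi|$, combined with the comparison $|\Omega|\approx r$ on $S_{t,u}$.

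\textbf{Step 3: continuation criterion and coordinates.} Part (2) is the breakdown criterion of \cite[Theorem 17.1]{Miao_thesis}: the estimates persist as long as $\mu>0$, so a finite exterior lifespan forces $\inf\mu\to0$, and hence $\inf\bb\to0$. For part (3) we argue that $\bb>0$ and $\tr\chi>0$ make the flow map along $cL=\p_t$ a diffeomorphism from $S_{t_0,u}$ onto $S_{t,u}$. Together with $\bb>0$, this makes $(t,u,\omega)$ a valid coordinate system before the shock.

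I expect the main obstacle to be the bookkeeping of Step 2: confirming that the normalizations and weights of \cite{Miao_thesis} really yield exactly the rates $\l t\r^{-2}(\log t+1)$ and $\l t\r^{-1}$ claimed here, in particular for $\bb\Lb\Phi$ and $\bb^2\bp^2\Phi$. There the transversal derivatives are controlled only with the degenerate weight $\bb$, and this has to be tracked carefully.
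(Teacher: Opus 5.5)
Your plan follows the paper's route: its proof reads (1) and (3) directly off \cite[Theorem 17.1]{Miao_thesis}.

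For (2), the paper supplies the step that your sentence ``the estimates persist as long as $\mu>0$'' leaves implicit. By (1), $|\bb\bp\Phi|,|\bb^2\bp^2\Phi|\les\ve\l t\r^{-1}$. So if $\inf_{\Sigma_t^m}\bb\ge c_0>0$ on $[t_0,T^e_\ve)$, then $|\bp\Phi|,|\bp^2\Phi|\les_{c_0}\ve\l t\r^{-1}$. Energy estimates together with local existence then extend the exterior solution past $T^e_\ve<\infty$, which contradicts the definition of $T^e_\ve$.

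One part of your Step 2 should be dropped rather than carried out. No transport argument is needed for $c\tr\chi-2/\tir$. \cite{Miao_thesis} already compares $\tr\chi$, taken for the generator normalized by $Lt=1$ (our $cL$), with its value on the constant-state acoustical cone. That value depends on $(t,u)$ only, and with our normalization of $u$ it is exactly $2/\tir$, since those cones are $r=\bar c\tir$.

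Integrating (\ref{s1}) yourself would not give the stated bound. The term $\bar k_{\bN\bN}\tr\chi$ alone is of size $\ve\l t\r^{-2}\bb^{-1}$. Even after the cancellation recorded in Lemma \ref{7.03.2.26}, quadratic terms such as $L\varrho\,\Lb\varrho$ still carry a factor $\bb^{-1}$, so your constants would not be uniform up to $T^e_\ve$. Uniformity up to the shock is exactly what \cite{Miao_thesis} provides.

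Similarly, in your remark on (\ref{lb}), it is $\bb\bar k_{\bN\bN}$, not $\bar k_{\bN\bN}$, that is $O(\ve\l t\r^{-1})$.
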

\begin{proof}
The estimates in (1) and (3) can be located in \cite[Theorem 17.1, Section 17.5, Page 450- 451]{Miao_thesis}. Due to (\ref{3.21.1.26}), 
$$
|\bb \bp\Phi, \bb^2 \bp^2\Phi|\les \ve \l t\r^{-1}.$$
 Then, assuming that there is a constant $c_0>0$ such that  $\inf_{\Sigma_t^m}\bb>c_0$ for $t_0\le t<T_\ve^e$, we have $|\bp \Phi, \bp^2\Phi|\les_{c_0} \ve \l t\r^{-1}$. Using the estimates as continuation principle, we can extend the smooth solution $\Phi$ beyond $T_\ve^e<\infty$ by the standard energy argument and the local existence result, leading to a contradiction with the definition of $T_\ve^e$. This gives (2).    
\end{proof}
From now on, we assume $0<\ve\le \ve_0$ where $\ve_0$ is chosen such that Proposition \ref{decay_ricci_coef} holds,   and may be further reduced according to the convention stated there. 
We can derive the following basic control on geometric quantities in $D^+$. 
\begin{proposition}\label{7.9.4.26}
In $D^+$,  we have
\begin{enumerate}
\item the following comparison results,
\begin{equation*}
\tir\approx r\approx \l t\r,\, c-\bar c=O(\ve/\l t\r),
 \end{equation*}
\item the following estimates
\begin{equation}\label{1.2.1.25}
\la=O(\ve\log \l t\r), \quad  r y', \tir y =O(\ve\log \l t\r),
\end{equation}

\item the following comparison results of $\tir$ and $r$, 
\begin{equation}\label{3.25.2.26}
\frac{\bar c\tir}{r}, \frac{r}{\bar c \tir}=1+O(\ve\l t\r^{-1}(\log\l t\r)^2)
\end{equation}
where $\ve>0$ is sufficiently small.
\end{enumerate} 
\end{proposition}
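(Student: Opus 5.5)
Everything will be obtained by integrating transport equations along the outgoing null generators $\Upsilon_{\omega,q}$, i.e.\ along the integral curves of $cL=\p_t$ in the transported coordinates (\ref{11.27.1.23}). The inputs are the estimates of Proposition \ref{decay_ricci_coef} and the initial values on $\Sigma_{t_0}^m$ from Proposition \ref{local}. On $\Sigma_{t_0}^m$ we have $\tir=\bar c^{-1}r$, $y=y'=0$ and $\la=0$.

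\textbf{Part (1).} For $c-\bar c$ we use that $c$ is a smooth function of $\varrho$ with $c(0)=\bar c$. The bound $|\Phi|\les\ve\l t\r^{-1}$ from (\ref{3.21.1.26}) then gives $c-\bar c=O(\ve\l t\r^{-1})$. Along a generator $\tir=t+u$ with $u\in[-\bar c^{-1},\bar c^{-1}]$ and $t\ge t_0=5\bar c^{-1}$, so $\tir\approx\l t\r$ is immediate. For $r$ we compute $\p_t r=cL(r)=c(\bT+\bN)r$. We have $c\bT r=\bB r=(\p_t+v^a\p_a)r=v\c x/r=O(\ve\l t\r^{-1})$ and $c\bN r=c\,\bN\c x/r=c(1-\f12|y'|^2)$. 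Since $|y'|\le2$, this gives $\p_t r\ge0$, so $r\ge r(t_0)$, and $r\le r_M$ gives the upper bound. Granting part (2), $\p_t r=\bar c+O(\ve\l t\r^{-1}\log\l t\r)$, and integrating yields $r\approx \l t\r$. We run a bootstrap that is closed jointly with part (2).

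\textbf{Part (2).} We derive transport equations for $\la^a=\l {}\rp a\O,\bN\r$ and for $y,y'$.

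For $\la^a$, note that $\bN$ is the unit normal to $S_{t,u}$ in $\Sigma_t$. The evolution of $\bN$ along $L$ is given by the frame equations of Lemma \ref{6.7con}, together with the decomposition $L=\bT+\bN$: the component of $\bd_4\bN$ tangent to $S_{t,u}$ is controlled by $\zb-\zeta$ and $\sn\log c$, $k_{A\bN}$. Using (\ref{5.7.1.26}) and (\ref{6.18.1.26}), these reduce to $\sn\Phi$, $L\Phi$ and $\sn\log\bb$. We then write
\[
L\la^a=\l \bd_L{}\rp a\O,\bN\r+\l{}\rp a\O,\bd_L\bN\r .
\]
The first term is $O(|v|)$ because ${}\rp a\O$ is a Euclidean Killing field and the flat and acoustic connections differ by $\bp\Phi$-terms. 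The second term is bounded by $|\Omega|\,(|\sn\Phi|+|\sn\log\bb|+|L\Phi|)\les r\c\ve\l t\r^{-2}\log\l t\r$. Integrating in $t$ gives $\la=O(\ve\log^2\l t\r)$ at worst. To sharpen this to $O(\ve\log\l t\r)$, we isolate the $\sn\bb$ contribution and use $|\sn\bb|\les\ve\l t\r^{-1}\log t$, after checking which terms genuinely carry only one $\log$.

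For $y$ we use $\l{}\rp a\O,\bN\r=\tensor{\ud\ep}{^a_{jk}}x^j\bN^k=\tensor{\ud\ep}{^a_{jk}}x^j y'^k$. Hence $|y'^\perp|\approx|\la|/r$, where $y'^\perp$ is the component perpendicular to $x$, while the radial part of $y'$ is quadratic in $y'^\perp$ because $|\bN|=1$. This gives $ry'=O(\la)=O(\ve\log\l t\r)$. For $\tir y$ we write $\tir y=\tir y'+x(\tir/r-c^{-1})$, which reduces the claim to $r-c\tir=O(\ve\log\l t\r)$. To prove the latter, we compute $\p_t(r-c\tir)=c\bN r+O(\ve\l t\r^{-1})-c-\tir\p_t c$. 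Here $c\bN r-c=O(|y'|^2)$, and $\tir\p_t c=\tir\,cL(c)=O(\ve\l t\r^{-1})$ since $|L\Phi|\les\ve\l t\r^{-2}$. The total rate is therefore $O(\ve\l t\r^{-1})$, and integrating gives $O(\ve\log\l t\r)$. The initial value is $r-\bar c\tir=0$, up to $O(\ve)$ from $c-\bar c$.

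\textbf{Part (3).} We need the sharper $r-\bar c\tir=O(\ve\log^2\l t\r)$. To get it, we rewrite the preceding rate as $\p_t(r-\bar c\tir)=c-\bar c+O(|y'|^2)+O(\ve\l t\r^{-1})$. The bound $c-\bar c=O(\ve\l t\r^{-1})$ alone again only gives a log. Dividing by $r\approx\l t\r$ then gives (\ref{3.25.2.26}) in the form $O(\ve\l t\r^{-1}\log^2\l t\r)$, which the stated bound tolerates.

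\textbf{Main obstacle.} The hardest step is the transport of $\la$ (equivalently of $\bN$). It requires expressing $\bd_L\bN$ and $\bd_L{}\rp a\O$ in terms of the quantities controlled in (\ref{3.21.1.26}) with only a single logarithmic loss. I would do this through the identities (\ref{5.7.1.26}) and (\ref{6.18.1.26}), and rely on the fact that the curl-free condition makes the flat commutator terms $\bp v$ symmetric, so that they cancel in $\l\bd_L{}\rp a\O,\bN\r$.
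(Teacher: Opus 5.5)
\textbf{The gap is in the $\la$ estimate.} Your overall architecture matches the paper's: transport along the generators $cL=\p_t$, then $ry'\approx\la$, then comparison of $r$ with $\bar c\tir$. But the step you single out as the crux, $\la=O(\ve\log\l t\r)$, does not go through as written. Everything downstream rests on it: $ry'$, integrability of $|y'|^2$, $r\approx\l t\r$, part (3), and $\tir y$.

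You assert that the $S_{t,u}$-tangential part of $\bd_L\bN$ involves $\zb-\zeta$, and hence $\sn\log\bb$. It does not. By Lemma \ref{6.7con}, $\bd_4\bN=\f12(\bd_4e_4-\bd_4e_3)=-\zb_Ae_A-\bar k_{\bN\bN}\bT$, and $\zeta$ never enters.

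Had such a term been present, your argument could not absorb it. The estimate (\ref{3.21.1.26}) only gives $|\sn\bb|\les\ve\l t\r^{-1}(\log t+1)$, so $|\Omega||\sn\log\bb|\les\ve\log t$, which integrates to $O(\ve t\log t)$; moreover $\sn\log\bb$ degenerates as $\bb\to0$. Your intermediate bound $r\c\ve\l t\r^{-2}\log\l t\r$ silently assumes an extra factor $\l t\r^{-1}$ for $\sn\bb$. The proposed ``sharpening'' via $|\sn\bb|\les\ve\l t\r^{-1}\log t$ goes in the wrong direction.

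The justification for the first term has the same problem. ``Flat and acoustic connections differ by $\bp\Phi$-terms'' would be fatal on its own, since $r|\Lb\Phi|\approx\ve$ is not integrable in $t$. What actually happens is this: for $\Sigma_t$-tangent $X,Y$, the Christoffel term $\bg(\bd_LX,Y)-L(X^i)Y^i$ equals $\f12c^{-1}X^jY^k(\p_kv_j-\p_jv_k)$, which vanishes by irrotationality.

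With the correct frame identity one gets $L\la^a=c^{-1}\tensor{\ud\ep}{^a_j_k}v^j\bN^k-\zb_A\,{}\rp{a}\Omega^A$. This is the content of the paper's (\ref{3.22.5.21}), taken from Christodoulou--Miao. By (\ref{5.7.1.26}) and (\ref{6.18.1.26}), $\zb=O(|\sn\Phi|+|L\Phi|)=O(\ve\l t\r^{-2})$. Hence $|L\la|\les\ve\l t\r^{-1}$, and $\la=O(\ve\log\l t\r)$ follows directly, with no sharpening needed.

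\textbf{The rest of your plan.} Once this is repaired, it works and is in places simpler than the paper's route.
\begin{itemize}
\item You get $c-\bar c=O(\ve\l t\r^{-1})$ straight from $c=\bar c e^{(\ga-1)\varrho/2}$ and $|\varrho|\les\ve\l t\r^{-1}$. This bypasses the paper's detour through the shift $\b$ and an integration in $u$.
\item You derive $\tir y$ from $r-c\tir$, which replaces the paper's integration of (\ref{4.7.1.26}).
\item You use $y'\c\omega=-\f12|y'|^2$ instead of the paper's crude bound by $|y'|$. This actually gives $r-\bar c\tir=O(\ve\log\l t\r)$, one logarithm better than (\ref{3.25.2.26}). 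So calling $O(\ve\log^2\l t\r)$ ``sharper'' is backwards.
\item $|y'|\le2$ does not give $\p_t r\ge0$; it only gives $\bN\c\omega\ge-1$. Positivity of $\p_t r$ needs the bootstrap smallness of $|y'|$.
\end{itemize}
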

\begin{proof} 
In $D^+$, it is easy to see that  $ \tir\approx \l t\r$ and $4\le r\le r_M\approx \l t\r$.

For any point $p\in D^+$, due to (3) in Proposition \ref{decay_ricci_coef}, there is a unique outgoing null geodesic, originating from $(t_0, \omega, q)$ such that
\begin{equation}\label{7.31.2.26}
 p=\Upsilon_{\omega, q}(t). 
 \end{equation}
Using $cL(c^{-1})=-L\log c$ and (\ref{3.21.1.26}), integrating along the null geodesic $\Upsilon_{\omega,q}$ in (\ref{7.31.2.26}) gives
$$c^{-1}(t)-c^{-1}(t_0)=O(\ve).$$ In view of the last estimate in (\ref{4.6.0.26}), we have $c^{-1}=\bar c^{-1}+O(\ve)$ in $D^+$. Hence $c=\bar c+O(\ve)$ in $D^+$.  This estimate will be improved shortly. 

Next, we claim
\begin{equation}\label{7.24.1.26}
\b \tir^{-1}=O(\ve) \mbox{ in } D^+. 
\end{equation}
Indeed,  rewriting the transport equation (\ref{1.22.4.22}) for $\b$ implies
\begin{equation*}
\sn_L \b-\chi\c \b=\bb(\zb-\ze).
\end{equation*}
Using
 Proposition \ref{decay_ricci_coef}, (\ref{5.7.1.26}), $\b(t_0)=0$ and $c-\bar c=O(\ve)$, we can obtain (\ref{7.24.1.26}) by integrating along $\Upsilon_{\omega, q}(t)$ from $\Sigma_{t_0}^m$. 
 
Note $
\p_u f=\bb\bN f+\b(f).$
Then using (\ref{3.21.1.26}), (\ref{7.24.1.26})   and $c-\bar c=O(\ve)$,
 we have 
$$|c-\bar c|=|\int_u^{\bar c^{-1}} \p_u c|=O(\ve \l t\r^{-1}).$$
This is the last estimate in (1). We postpone proving $r\approx \tir$ in (1).

 Next we consider (2).   Using $c-\bar c=O(\ve)$, (\ref{3.22.5.21}) and (\ref{3.21.1.26}), for $p\in D^+$ we have by integrating along $\Upsilon_{\omega,q}(t)$ from $t=t_0$ that 
\begin{align*}
|\la|\les \int_{t_0}^t (|\Omega\Phi|+|v|)\les\ve\log \l t\r.
\end{align*}
Using  $r y'\approx\la$ from (\ref{12.20.3.21}),
 we can obtain the estimate of $|y'|$.

Using $\Phi=O(\ve\l t\r^{-1})$  and $c-\bar c=O(\ve \l t\r^{-1})$ in $D^+$, in view of (\ref{8.17.1.22}), for $p\in D^+$ we deduce by integrating along $\Upsilon_{\omega, q}(t)$ that
\begin{align*}
    |r(t,\omega, r_0)-\bar c (t+u)|&\les\int_{t_0}^t \{|c-\bar c|+| v(\bN)|+|y'|\} dt'\\
    &\les \ve\log \l t
    \r+\int_{t_0}^t |\la| /r\\
    &\les \ve\log \l t\r+\int_{t_0}^t \frac{|\la|}{\bar c \tir}(|\frac{\bar c \tir}{r}-1|+1) dt'.
 \end{align*}
 With $\fS=\frac{r}{\bar c \tir}-1$ and noting that $\frac{\bar c \tir}{r}-1=-\frac{\fS}{\fS+1}$, using the $\lambda$-estimate in (\ref{1.2.1.25}), we derive
 \begin{align*} 
|\fS|&\les \l t\r^{-1} (\log \l t \r)^2 \ve\left(1+\sup_{t_0<t'<t}|\bar c\tir/r-1|\right)\\
&\les \l t\r^{-1}(\log \l t\r)^2 \ve\left(1+\sup_{t_0<t'<t}|\frac{\fS}{\fS+1}|\right).
\end{align*}
Since $\fS(t_0)=0$, with an  auxiliary  bootstrap assumption that $|\fS|\le \frac{2}{3} $ in $D^+$, we can obtain $\fS=O(\l t\r^{-1}(\log \l t\r)^2 \ve)$, which improves the auxiliary  bootstrap assumption with $\ve>0$ sufficiently small. This also implies $r\approx \tir$ and
\begin{equation*}
\frac{\bar c \tir}{r}-1=O(\ve \l t\r^{-1}(\log \l t\r)^2).
\end{equation*}
 Thus (\ref{3.25.2.26}) is proved. 
 
 To derive the estimate $\tir y$, we integrate (\ref{4.7.1.26}) along $\Upsilon_{\omega, q}(t)$, with the data $y(t_0)=0$ in Proposition \ref{local}. The result follows then by using (\ref{3.21.1.26}). 
 
\end{proof}
Next we prove the following result about the variations of $q$ and $\omega$ along an outgoing null geodesic  which starts at $\Sigma_{t_0}^m$.
\begin{corollary}\label{4.17.1.26} 
Along an outgoing null geodesic $\Upsilon_{\omega_0, q_0}(t)$ starting from $\Sigma_{t_0}^m$ with $\omega_0\in {\mathbb S}^2$ and $q_0\in [-1,1]$, we have in $D^+$ that
\begin{equation}\label{4.17.2.26}
    q-q_0=O(\ve (\log(1/\ve))^2), \quad \omega-\omega_0=O(\ve).
\end{equation}
\end{corollary}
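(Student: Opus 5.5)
The plan is to read off both estimates in (\ref{4.17.2.26}) from Proposition \ref{7.9.4.26}, using that $u$ and the pullback angle are constant along $\Upsilon_{\omega_0,q_0}$ (see (\ref{11.27.1.23})). Throughout I restrict to the range of times actually used later, namely $t_0\le t\lesssim \ve^{-C}$ (in particular $t\le 2t_1$), so that $\log\l t\r\les \log(1/\ve)$. Without such a restriction the $q$-estimate cannot hold as stated, since $T^e_\ve$ may be exponentially large in $1/\ve$.

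\emph{Drift of $q$.} Along $\Upsilon_{\omega_0,q_0}$ we have $u\equiv q_0/\bar c$, hence $\bar c\tir=\bar c t+q_0$. With $q=r-\bar c t$ this gives the identity
\begin{equation*}
q-q_0=r-\bar c\tir=\bar c\tir\Big(\frac{r}{\bar c\tir}-1\Big).
\end{equation*}
By (\ref{3.25.2.26}) and $\tir\approx\l t\r$, the right-hand side is $O(\ve(\log\l t\r)^2)$, which is $O(\ve(\log(1/\ve))^2)$ in the stated range. This step is immediate once Proposition \ref{7.9.4.26}(3) is available.

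\emph{Drift of $\omega$.} Write $\omega=x/r$ along the curve. Since $cL=\p_t$ and $cL=\p_t+v^a\p_a+c\bN$ in Cartesian components, we have $\frac{dx}{dt}=c\bN+v$. Therefore
\begin{equation*}
\frac{d\omega}{dt}=\frac{1}{r}\Big(\frac{dx}{dt}-\Big\l \frac{dx}{dt},\omega\Big\r\omega\Big)=\frac{1}{r}\big(c\,(\bN-\l\bN,\omega\r\omega)+v-\l v,\omega\r\omega\big).
\end{equation*}
The part of $\bN$ perpendicular to $\omega$ is bounded by $|y'|$, since $\bN-\omega=y'$. By (\ref{1.2.1.25}), $|y'|\les \ve\log\l t\r/r$. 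By (\ref{3.21.1.26}), $|v|\les\ve\l t\r^{-1}$. Using $r\approx\l t\r$ from Proposition \ref{7.9.4.26}(1), we get
\begin{equation*}
\Big|\frac{d\omega}{dt}\Big|\les \ve\l t\r^{-2}\log\l t\r .
\end{equation*}
This bound is integrable on $[t_0,\infty)$. Integrating from $t_0$, where $\omega=\omega_0$ because $\bN=\p_r$ on $\Sigma^m_{t_0}$, gives $\omega-\omega_0=O(\ve)$ uniformly in $D^+$, with no time restriction needed.

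The main (mild) obstacle is the bookkeeping of the range of $t$ for the $q$-estimate, together with making sure the comparison (\ref{3.25.2.26}) is applied with $u$ exactly constant along the generator. The latter holds because $\Upsilon_{\omega_0,q_0}$ lies in $\H_u$ by construction (Proposition \ref{decay_ricci_coef}(3)). Everything else follows directly from Proposition \ref{7.9.4.26}.
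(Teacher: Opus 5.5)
Your proof is correct and is essentially the paper's: you use the identity $q-q_0=r-\bar c\tir$ along the generator together with (\ref{3.25.2.26}), and you integrate the bound $|\dot\omega|\les(|y'|+|v|)/r$ using (\ref{1.2.1.25}) and $r\approx\l t\r$. Your restriction $\log\l t\r\les\log(1/\ve)$ for the $q$-estimate is justified, since the paper's argument also only yields $O(\ve(\log\l t\r)^2)$ and the drift of $q$ genuinely grows like $\ve\log\l t\r$, so the stated bound cannot hold up to $T^e_\ve$. This costs nothing, because the corollary is only applied at $t=t_1$.
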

\begin{proof}
Note that at $t=t_0$, $q_0=\bar c u$, 
$$q-q_0=r-\bar c t-\bar c u=r-\bar c \tir.$$ Then the first estimate follows immediately from using (\ref{3.25.2.26}). 

To derive the second one, we first write 
\begin{equation*}
\Upsilon_{\omega_0, q_0}(t)=\left(t, x(t)\right)=\left(t, r(t)\omega(t)\right).
\end{equation*}
Differentiating along the null geodesic yields
\begin{equation*}
  cL=\frac{d}{dt}(t,  x(t))=(1, \dot x(t))=\p_t+v^i\p_i+c\bN^i\p_i.
\end{equation*}
This implies
\begin{equation*}
\dot x(t)=v^i+c \bN^i. 
\end{equation*}
Due to $x=r\omega$
\begin{equation*}
   \frac{d}{dt}(r(t) \omega(t))=\dot r(t)\omega(t)+r(t)\dot\omega(t). 
\end{equation*}
Hence in view of (\ref{8.17.1.22}) we have
\begin{align*}
r\dot\omega&=c\bN^i+v^i-\dot r\omega=c\bN^i+v^i-c(c^{-1}v(\bN)+{y'}^k(\frac{ x^k}{r}-c^{-1}v^k)+1)\omega\\
&=c(\bN-\omega)+v-(v(\bN)+c y'\c(\omega-c^{-1} v))\omega,
\end{align*}
which gives
\begin{equation*}
\dot\omega=c \frac{y'}{r}+\frac{1}{r}(v^i-v(\bN) \frac{x^i}{r})-c\frac{y'}{r}(\omega-c^{-1}v)\omega.
\end{equation*}
Hence, also noting $v= O(\l t\r^{-1} \ve)$ from (\ref{3.21.1.26}) and 
\begin{equation*}
|\dot \omega|\les |y'/r|+|v/r|
\end{equation*}
we derive by using (\ref{1.2.1.25}) that
\begin{equation*}
|\omega(t)-\omega(t_0)|\les \ve .
\end{equation*}
Therefore the proof is complete. 

\end{proof}

  \section{Proof of Theorem \ref{main}
 }\label{pmain}
 Throughout this section, we suppose $0<\ve\le \ve_0$, where $\ve_0$ is chosen such that Proposition \ref{decay_ricci_coef} holds and may be further reduced as needed, as explained there. All such reductions depend only on universal upper bounds.
 
 We divide the proof of Theorem \ref{main} into two steps: 
 \begin{enumerate}
   \item Using the results in Section \ref{prl_est}, the inverse function theorem and Bernoulli's law, we show Proposition \ref{12.31.1.25}, which says that at $t_1=\frac{1}{2\ve}$ a certain compression occurs. This gives the desired initial data at the intermediate time $t_1$ to form blow-up within finite time by virtue of the Riccati-equation mechanism given in Proposition \ref{Riccati_prop};
 \item  Prove Proposition \ref{Riccati_prop} by using Proposition \ref{7.03.3.26} and Proposition \ref{decay_ricci_coef}; using this result, we complete the proof of Theorem \ref{main} by using Proposition \ref{12.31.1.25}.
 \end{enumerate}
 
  \subsection{Formation of compression} We first give the result of formation of compression at $t_1=\frac{1}{2\ve}$ along $\Upsilon_*$. Recall that we denote by $\Upsilon_{\omega_0, q_0}$ the outgoing null geodesic starting from $(t_0, \omega_0, q_0)$, with $\omega_0\in \mathbb S^2$ and $q_0\in [-1, 1]$.
\begin{proposition}\label{12.31.1.25}
 Let $\ti z= c z$ be the physical radiation field, first introduced in (\ref{eq:phys.radfield}).
Along an outgoing null geodesic $\Upsilon_{\omega_0, q_0}(t)$ starting from $(t_0, \omega_0, q_0)$, with $\omega_0\in \mathbb S^2$ and $q_0\in[-1,1]$, we have, with $t_1=\frac{1}{2\ve}$, 
\begin{equation}\label{key_apx}
    \ti z\big(\Upsilon_{\omega_0, q_0}(t_1)\big)=-2\ve\bar c^{-1}\p^2_q F_0(\omega_0, q_0)+O\left((\ve\log(1/\ve))^2\right). 
\end{equation}
\end{proposition}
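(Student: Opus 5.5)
We follow the two-step route sketched in Step 3 of the introduction. First we reduce $\ti z$ at $t_1$ to a Cartesian derivative of $r\varrho$. Then we evaluate that derivative using the linear approximation of Section \ref{prl_est}.

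\textbf{Step 1: reduction to $-2\p_q(r\varrho)$.} At a point $p=\Upsilon_{\omega_0,q_0}(t_1)$, write $\ti z=c\tir(\Lb\varrho-h\varrho)$. By Proposition \ref{decay_ricci_coef}, $\Phi=O(\ve\l t\r^{-1})$ and $c\,\tr\chi=2/\tir+O(\ve\l t\r^{-2}\log t)$, so $ch\varrho=\tir^{-1}\varrho+O(\ve^2\l t\r^{-3}\log t)$.

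Next we decompose $\Lb=e_3=2\bT-e_4=2c^{-1}\p_t+2c^{-1}v^i\p_i-L$ and $\bN=\p_r+(\bN-\omega)$. Here $\bN-\omega=y'$ with $ry'=O(\ve\log t)$ by (\ref{1.2.1.25}), and $L\varrho,\sn\varrho=O(\ve\l t\r^{-2})$. This gives
\[
c\Lb\varrho=-2\bar c\p_r\varrho+O\big(\ve^2(\log\l t\r)\l t\r^{-2}\big)+O(\ve\l t\r^{-2}).
\]
In this step we use $c\bT-c\bN\approx\p_t-\bar c\p_r$, $\p_t\varrho=-\bar c\p_r\varrho+O(\ve\l t\r^{-2})$ (from $L\varrho$ small) and $c-\bar c=O(\ve\l t\r^{-1})$.

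Finally we replace $\tir$ by $r/\bar c$ via (\ref{3.25.2.26}). Combining the pieces, $\ti z=-2\p_r(r\varrho)+\varrho(\,\cdot\,)$ up to errors. The $\varrho$ terms should combine as $-2\bar c\tir\p_r\varrho-\bar c\tir\cdot\tir^{-1}\varrho\cdot$(const), and we must check the constants so that one gets $-2\p_q(r\varrho)$ modulo $O(\ve\l t\r^{-1})=O(\ve^2)$ at $t_1$. The $O(\ve/t)$ error at $t=t_1=1/(2\ve)$ is $O(\ve^2)$, hence admissible. The conclusion of this step is $\ti z(p)=-2\p_q(r\varrho)(p)+O((\ve\log(1/\ve))^2)$, where $\p_q$ means $\p_r$ at fixed $t$.

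\textbf{Step 2: expressing $\varrho$ through $\phi$ and $F_0$.} By Bernoulli (\ref{Bernoulli}), $w(\rho)=-\p_t\phi-\f12|\nab\phi|^2$. Inverting $w$ near $\bar\rho$ with $w'(\bar\rho)=\bar c^2/\bar\rho$ gives $\varrho=-\bar c^{-2}\p_t\phi+O(|\bp\phi|^2)$, with the same structure after one derivative. Proposition \ref{12.31.4.25} lets us replace $\phi$ by $\phi_\ve$ at cost $\ve^2\log(1/\ve)\l t\r^{-1}$ in $\bp\phi$ and in $Z\bp\phi$. Since $\p_r$ applied to $r(\cdot)$ is controlled by the vector fields $Z$ (scaling and boosts) divided by $\l t\r$ plus $\l t\r^{-1}$ terms, the contribution to $\p_q(r\varrho)$ is $O(\ve^2\log(1/\ve))$.

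Proposition \ref{apx_prop} gives $\p_t\phi_\ve=-\bar c\ve r^{-1}\p_qF_0+O(\ve r^{-2})$ together with its $\p_r$ derivative. Hence
\[
r\varrho=\ve\bar c^{-1}\p_qF_0(\omega,q)+O(\ve/r)+O(\ve^2\log(1/\ve)),
\]
and therefore $\p_q(r\varrho)=\ve\bar c^{-1}\p_q^2F_0(\omega,q)+O((\ve\log(1/\ve))^2)$, where $(\omega,q)$ are the Cartesian values at $p$.

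\textbf{Step 3: transporting the base point.} Corollary \ref{4.17.1.26} gives $q-q_0=O(\ve(\log 1/\ve)^2)$ and $\omega-\omega_0=O(\ve)$ along $\Upsilon_{\omega_0,q_0}$ up to $t_1$. Since $F_0$ is smooth with bounded third derivatives, $\ve\p_q^2F_0(\omega,q)=\ve\p_q^2F_0(\omega_0,q_0)+O(\ve^2(\log1/\ve)^2)$. Combining the three steps yields (\ref{key_apx}).

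\textbf{Main obstacle.} The delicate point is Step 1. There the $\varrho$-terms (from $-h\varrho$, from $\p_r(r\varrho)=r\p_r\varrho+\varrho$, and from $\tir$ versus $r/\bar c$) must match exactly at order $\ve/t$ relative to the leading term $\ve$. One must also verify that replacing the acoustical frame $(\Lb,\bN,\tir)$ by Cartesian $(\p_t-\bar c\p_r,\p_r,r/\bar c)$ costs only $O(\ve\log t\cdot\ve)$. This relies on $ry'=O(\ve\log t)$ and (\ref{3.25.2.26}), and bounding the product $\ve\log t\cdot|\bp\varrho|$ by $(\ve\log1/\ve)^2$ at $t=t_1$ is where the logarithmic loss enters.
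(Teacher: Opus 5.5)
Your proposal is correct and follows essentially the same three-step route as the paper: first Lemma \ref{comp_radiation}, then Proposition \ref{7.03.1.26} via Bernoulli's law and the Friedlander field, then Corollary \ref{12.31.7.25} using the drift bounds of Corollary \ref{4.17.1.26}. The differences are cosmetic: you compare $\ti z$ with $-2\p_r(r\varrho)$ using the smallness of $L\varrho$ rather than the exact identity for $\ti z-\stc\Lb(r\varrho)$, and you stop the inversion of $w$ at first order.

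Your ``main obstacle'' is overstated. The $\varrho$-terms need not cancel, since $\varrho=O(\ve/t_1)=O(\ve^2)$, and the logarithmic loss actually enters through Proposition \ref{12.31.4.25} and the $q$-drift in Corollary \ref{4.17.1.26}, not through Step 1. Also, the bound $r\p_r\bp(\phi-\phi_\ve)=O(\ve^2\log(1/\ve))$ follows directly from $\p_i\in Z$; the transversal derivative $\p_r$ is not controlled by $Z/\l t\r$ as you claim.
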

We divide the proof of Proposition \ref{12.31.1.25}  into three steps:
\begin{enumerate}
\item we first prove a comparison result between $\ti z$ with $\p_q(r\varrho)$ in Lemma \ref{comp_radiation}; 
\item then we derive the crucial approximation result in Proposition \ref{7.03.1.26}, which approximates $\p_q(r\varrho)(t,x)$ by the leading term $\ve \bar c^{-1} \p_q^2 F_0(\omega, q)$, where $F_0$ is the Friedlander  radiation field introduced in Theorem \ref{known};
\item finally, using the result of Corollary \ref{4.17.1.26} which justifies, along the outgoing null geodesic $\Upsilon_{\omega_0, q_0}$, $\p_q^2 F_0(\omega, q)$ can be approximated by $\p_q^2 F_0(\omega_0, q_0)$, we  complete the proof of Proposition \ref{12.31.1.25} by establishing Corollary \ref{12.31.7.25}.
\end{enumerate}  

Let us first control the difference between $-c\tir(\Lb\varrho-h\varrho)$ with $\p_q(r\varrho)$ at $t_1=\frac{1}{2\ve}$. For this purpose, we note $\stc \Lb=\bar c^{-1}\p_t-\p_r=-2\p_q$.
\begin{lemma}\label{comp_radiation}
     At $\Sigma_{t_1}^m$, we have
    \begin{equation*}
\ti z-\stc\Lb(r \varrho)=O(\ve^3(\log (1/\ve))^2). 
    \end{equation*}
\end{lemma}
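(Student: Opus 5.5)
The plan is to expand both quantities in Cartesian derivatives of $\varrho$ at a point of $\Sigma_{t_1}^m$ and compare them term by term. Every discrepancy will be a product of a small geometric deviation and a first derivative of $\varrho$; there is no genuine cancellation to exploit.

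\textbf{Step 1: rewrite both sides.} Since $\Lb=e_3=\bT-\bN$ and $c\bT=\p_t+v^a\p_a$, we have
\begin{equation*}
\ti z=\tir(\p_t\varrho+v^a\p_a\varrho)-c\tir\,\bN^i\p_i\varrho-c\tir h\,\varrho .
\end{equation*}
On the other side, $\stc\Lb=\bar c^{-1}\p_t-\p_r$, so
\begin{equation*}
\stc\Lb(r\varrho)=\bar c^{-1}r\,\p_t\varrho-r\,\p_r\varrho-\varrho .
\end{equation*}
The difference then splits into four pieces:
\begin{itemize}
\item[(i)] $(\tir-\bar c^{-1}r)\p_t\varrho$;
\item[(ii)] $\tir v^a\p_a\varrho$;
\item[(iii)] $-c\tir\bN^i\p_i\varrho+r\p_r\varrho$, which is rewritten as
\begin{equation*}
-(c\tir-r)\bN^i\p_i\varrho-r\,{y'}^i\p_i\varrho,\qquad \bN^i=\tfrac{x^i}{r}+{y'}^i;
\end{equation*}
\item[(iv)] $-(c\tir h-1)\varrho$.
\end{itemize}

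\textbf{Step 2: the input bounds at $t=t_1=\frac1{2\ve}$.}
\begin{itemize}
\item From Proposition \ref{12.31.4.25} together with Bernoulli's law \eqref{Bernoulli}, $\varrho$ is a smooth function of $\bp\phi$ vanishing at $0$. Hence $|\varrho|+|\bp\varrho|\les\ve\l t\r^{-1}$ in Cartesian derivatives, because $\bp\varrho$ involves only $\bp^2\phi$ times smooth coefficients. This is consistent with \eqref{3.21.1.26}. Likewise $|v|\les\ve\l t\r^{-1}$.
\item Proposition \ref{7.9.4.26} gives $c-\bar c=O(\ve\l t\r^{-1})$, $\bar c\tir/r=1+O(\ve\l t\r^{-1}(\log\l t\r)^2)$ and $r y'=O(\ve\log\l t\r)$.
\item From the first line of \eqref{3.21.1.26}, $c h=\tir^{-1}+O(\ve\l t\r^{-2}\log\l t\r)$.
\end{itemize}

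\textbf{Step 3: estimate each piece.} Note that $\l t\r\approx\ve^{-1}$ and $\log\l t\r\approx\log(1/\ve)$ at $t_1$.
\begin{itemize}
\item (i): $\tir-\bar c^{-1}r=O(\ve(\log\l t\r)^2)$, multiplied by $\ve\l t\r^{-1}$, gives $O(\ve^3(\log(1/\ve))^2)$. This term dictates the final rate.
\item (ii): $\tir\cdot\ve^2\l t\r^{-2}=O(\ve^3)$.
\item (iii): $c\tir-r=(c-\bar c)\tir+(\bar c\tir-r)=O(\ve(\log\l t\r)^2)$, so the first part is $O(\ve^3(\log(1/\ve))^2)$. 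The second part is $r y'\cdot\bp\varrho=O(\ve^3\log(1/\ve))$.
\item (iv): $c\tir h-1=O(\ve\l t\r^{-1}\log\l t\r)$, multiplied by $\varrho=O(\ve\l t\r^{-1})$, is even smaller.
\end{itemize}
Summing the four pieces gives the claimed bound.

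\textbf{Expected main obstacle.} The step needing most care is ensuring that the Cartesian bound $|\bp\varrho|\les\ve\l t\r^{-1}$ at $t_1$ holds without a factor $\bb^{-1}$. The bounds \eqref{3.21.1.26} are stated in the null frame, with $\bb$-weights on the $\Lb$ components. I would avoid these by using Proposition \ref{12.31.4.25} directly, which is valid since $t_1<1/\ve$. Alternatively, one notes that $\bb=\bar c+O(\ve\log(1/\ve))$ is bounded below at $t_1$, so the frame bounds convert to Cartesian ones.

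A second point to check is that \eqref{3.25.2.26} is applied at the same spacetime point, where $r,\tir$ are evaluated. Since the lemma is pointwise on $\Sigma_{t_1}^m$, this holds automatically.
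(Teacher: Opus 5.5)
Your proof is correct and follows the paper's argument. You use the same expansion of $\ti z-\stc\Lb(r\varrho)$ into $(\tir-\bar c^{-1}r)\p_t\varrho$, $\tir v^i\p_i\varrho$, an angular-deviation term and $-(c\tir h-1)\varrho$, each bounded via Proposition \ref{12.31.4.25}, \eqref{3.21.1.26}, \eqref{1.2.1.25} and \eqref{3.25.2.26}; the only cosmetic difference is that the paper writes the angular term as $-c\tir y^i\p_i\varrho$ with $y=\bN-x/(c\tir)$, which absorbs your separate $(c\tir-r)\bN^i\p_i\varrho$ piece.
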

\begin{proof}
    It is easy to see $\stc \Lb r=-1$. Moreover
    \begin{equation*}
    \Lb=\Lb^i \p_i +\Lb^0 \p_t=c^{-1} \p_t+(c^{-1}v^i-\bN^i)\p_i.
    \end{equation*}
Hence
\begin{align*}
\ti z-\stc\Lb(r \varrho)&=c \tir (c^{-1}\p_t +(c^{-1}v^i-\bN^i)\p_i)\varrho-r(\bar c^{-1}\p_t-\p_r)\varrho-\varrho(c\tir h-1)\\
&=(\tir -\bar c^{-1}r)\p_t \varrho+(\tir v^i \p_i+r\p_r-c\tir \bN^i\p_i )\varrho-\varrho(c\tir h-1)\\
&=(\tir -\bar c^{-1}r)\p_t \varrho+\left(\tir v^i +c\tir(\frac{x^i}{c\tir}-\bN^i)\right)\p_i \varrho-\varrho(c\tir h-1)\\
&=(\tir-\bar c^{-1}r)\p_t\varrho+(\tir v^i-c\tir y^i)\p_i \varrho-\varrho(c\tir h-1).
\end{align*}
Using Proposition \ref{12.31.4.25}, (\ref{3.21.1.26}), (\ref{1.2.1.25}) and (\ref{3.25.2.26}), we have 
\begin{equation*}
\ti z-\stc\Lb(r \varrho)=O(\ve^3(\log (1/\ve))^2)
\end{equation*}
as desired. 

\end{proof}
 


Then we prove the following important result of approximation. 
\begin{proposition}\label{7.03.1.26}
Let $\f12 t_1< t< 2 t_1$. With $\ve>0$ sufficiently small, we have at $\Sigma_t^m$ 
    \begin{equation}\label{3.23.1.26}
r\varrho
=
\varepsilon \bar c^{-1} \p_q F_0
-
\varepsilon^2 r^{-1} A_2 (\p_q F_0)^2
+\mathcal E(t,x),
    \end{equation}
with
$A_2=\frac{1}{\bar c^2}+\frac{w''(\bar\rho)\bar\rho^2}{2\bar c^4}$ and
\begin{equation}
|\mathcal E(t,x)| + |\partial_q \mathcal E(t,x)|\les\varepsilon^2\log(1/\ve).
\label{3.23.2.26}
\end{equation}
\end{proposition}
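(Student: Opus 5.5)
We express $\varrho$ through Bernoulli's law (\ref{Bernoulli}) and then substitute the radiation-field expansion of $\phi$.

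\textbf{Step 1: write $\varrho$ as a function of $\iota$.} With $\iota=-(\p_t\phi+\f12|\nab\phi|^2)$ we have $w(\rho)=\iota$. Since $w$ is smooth and $w'(\bar\rho)=p'(\bar\rho)/\bar\rho=\bar c^2/\bar\rho\neq0$, the inverse function theorem gives $\rho=w^{-1}(\iota)$ near $\iota=0$. Hence
\[
\varrho=\ln(w^{-1}(\iota)/\bar\rho)=a_1\iota+a_2\iota^2+O(\iota^3),
\]
where $a_1=\bar\rho^{-1}(w^{-1})'(0)=\bar c^{-2}$ and $a_2$ is computed from $w'(\bar\rho)$ and $w''(\bar\rho)$.

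\textbf{Step 2: replace $\phi$ by the linear profile.} By Proposition \ref{12.31.4.25}, on $\f12t_1<t<2t_1$ we have
\[
\bp(\phi-\phi_\ve)=O(\ve^2\log(1/\ve)\l t\r^{-1}),
\]
and the same holds after applying commuting fields. By Proposition \ref{apx_prop},
\[
\phi_\ve=\ve r^{-1}F_0+\R_\ve,\qquad \p^\a\R_\ve=O(\ve r^{-2}).
\]
Since $\p_tq=-\bar c$, this gives
\[
\p_t\phi=\ve\bar c\,r^{-1}\p_qF_0+O(\ve^2\log(1/\ve)r^{-1})+O(\ve r^{-2}),\qquad \p_r\phi=\ve r^{-1}\p_qF_0+\cdots .
\]
Therefore
\[
\iota=-\ve\bar c\,r^{-1}\p_qF_0-\tfrac12\ve^2r^{-2}(\p_qF_0)^2+\text{errors}.
\]
Multiplying by $r$,
\[
r\varrho=ra_1\iota+ra_2\iota^2+O(r\iota^3).
\]
Working through the signs, the leading term should be $\ve\bar c^{-1}\p_qF_0$. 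The $\ve^2r^{-1}(\p_qF_0)^2$ terms come from two sources, the $|\nab\phi|^2$ piece times $a_1$ and $a_2\bar c^2$, and together they should assemble into $-A_2$. I expect a sign or normalization adjustment here, which I will check against the stated $A_2$ as a consistency test.

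\textbf{Step 3: collect the errors.} The remaining terms are bounded as follows:
\[
r\cdot\ve^2\log(1/\ve)r^{-1}=\ve^2\log(1/\ve),\qquad r\cdot\ve r^{-2}\les\ve^3,\qquad r\iota^3\les\ve^3r^{-2}.
\]
The second bound uses $r\approx t\approx\ve^{-1}$. Cross terms between the main part and the errors are of size $\ve^3\log(1/\ve)$. All of these are admissible in (\ref{3.23.2.26}).

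For $\p_q\mathcal E$ I use the relation between $\p_q$ and the commuting fields: $\p_q$ is a combination of $\p_r,\p_t$ at fixed $\omega$, or equivalently, near the cone, of $(S-\bar c^{-1}\ldots)/r$ together with $\p_r$. One derivative of the errors in Proposition \ref{12.31.4.25} (with $|I|=1$) and in Proposition \ref{apx_prop} (with $|\a|=2$) keeps the same bounds, because $\p_q$ costs no power of $r$. Differentiating the factor $r$ produces $\p_q r$ terms of order one, which only multiply already-controlled errors.

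\textbf{Main obstacle.} The main difficulty is the bookkeeping in Step 3 for $\p_q\mathcal E$. The bound $\bp\p_q(\phi-\phi_\ve)=O(\ve^2\log(1/\ve)r^{-1})$ is not directly of the form $Z^I\bp$. My plan is to express $\p_q$ through the $Z$ fields with coefficients of order one, or to note simply that $\p_q=\p_r$ at fixed $t$, which is a translation combination and hence allowed. I also need that the constant $A_2$ matches exactly, since everything else is of lower order.
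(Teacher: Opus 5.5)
Your plan is essentially the paper's proof, and it is correct once the sign slip below is fixed. The ingredients are the same: Bernoulli's law and the inverse function theorem to write $\varrho$ as a smooth function of $\iota$, Proposition \ref{apx_prop} for $\phi_\ve$, and Proposition \ref{12.31.4.25} for $\phi-\phi_\ve$. The paper instead expands $w$ in powers of $\varrho$, inverts, and bounds the remainder by implicit differentiation. Your direct composition $\varrho=\ln(w^{-1}(\iota)/\bar\rho)$ is equivalent, and it turns the $\p_q$-bound on the cubic remainder into a plain chain-rule estimate. A few corrections are needed, none of them structural.

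\textbf{Sign.} Since $\p_t q=-\bar c$, you have $\p_t(\ve r^{-1}F_0)=-\ve\bar c\,r^{-1}\p_qF_0$, not $+\ve\bar c\,r^{-1}\p_qF_0$. Hence $\iota=\ve\bar c\,r^{-1}\p_qF_0-\f12\ve^2r^{-2}(\p_qF_0)^2+\dots$, and $r a_1\iota$ gives $+\ve\bar c^{-1}\p_qF_0$ with no adjustment. Your $\iota$ as written produces the opposite sign. This sign is exactly what yields compression later, through $\ti z\approx -2\p_q(r\varrho)$, so it must be fixed rather than left to a consistency check.

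\textbf{Role of $A_2$.} In the window $\f12t_1<t<2t_1$ one has $r\approx\ve^{-1}$. So $\ve^2r^{-1}A_2(\p_qF_0)^2=O(\ve^3)$, together with its $\p_q$-derivative, lies below the permitted error $\ve^2\log(1/\ve)$. Contrary to your remark, nothing hinges on matching $A_2$, and $\varrho=\bar c^{-2}\iota+O(\iota^2)$ already suffices. If you do want the check, $a_2=-\frac{1}{2\bar c^4}-\frac{w''(\bar\rho)\bar\rho^2}{2\bar c^6}$ and $-\frac{1}{2\bar c^2}+a_2\bar c^2=-A_2$. Also, $r\cdot\ve r^{-2}=\ve r^{-1}\approx\ve^2$, not $\ve^3$; this is still admissible.

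\textbf{The bound on $\p_q\mathcal E$.} This is not an obstacle. The fields $\p_t,\p_i$ are among the $Z$'s, so Proposition \ref{12.31.4.25} with $|I|=1$ gives $\bp^2(\phi-\phi_\ve)=O(\ve^2\log(1/\ve)r^{-1})$ directly. This is how the paper bounds $\bp\mathcal E_{\mathrm{Bern}}$. Proposition \ref{apx_prop} with $|\a|=2$ covers $\mathcal R_\ve$. Since $\p_q$ is a bounded combination of $\p_t,\p_r$ and $\p_q r=O(1)$, all the bounds carry over to $\p_q\mathcal E$.
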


\begin{proof}
Recall from Proposition \ref{apx_prop} that for
$$
\phi_\varepsilon(t,x) = \varepsilon r^{-1}F_0(\omega, q) + \mathcal R_\varepsilon(t,x),
$$
we have
\begin{equation}
|\mathcal R_\varepsilon| + |\p^\a \mathcal R_\varepsilon|
\les \varepsilon r^{-2}, |\a|=1,2.
\label{3.23.3.26}
\end{equation}
In view of the Bernoulli formula (\ref{Bernoulli}), we have 
\begin{equation}\label{12.31.3.25}
w(\rho)=-(\p_t \phi_\ve+\f12|\nab \phi_\ve|^2)+\E_{\mbox{\tiny Bern}}
\end{equation}
with the following estimates obtained by using Proposition \ref{12.31.4.25}, 
\begin{equation}\label{12.31.5.25}
|\E_{\mbox{\tiny Bern}}, \bp \mathcal E_{\mathrm{Bern}}|\les \ve^2 \log(1/\ve) r^{-1}.
\end{equation}
Here we used $r\approx t$ in $D^+$ from Proposition \ref{7.9.4.26}. 

Using $q=r-\bar c t$,
we deduce
$$
\partial_t(\varepsilon r^{-1}F_0)
=
-\varepsilon \bar c r^{-1}\p_q F_0.$$

Due to (\ref{3.23.3.26}), we have
$$
-\partial_t\phi_\varepsilon
=
\varepsilon \bar c r^{-1}\p_q F_0
+
O^{\le 1}(\varepsilon r^{-2}).
$$

Similarly for the gradient term, we derive
$$
|\nabla\phi_\varepsilon|^2
=
\varepsilon^2 r^{-2}(\p_q F_0)^2
+
O^{\le 1}(\varepsilon^2 r^{-3}).$$
Hence, in view of (\ref{12.31.3.25}), we obtain
\begin{equation}\label{3.23.4.26}
w(\rho)
=
\varepsilon \bar c r^{-1}\p_q F_0
-
\frac12 \varepsilon^2 r^{-2}(\p_q F_0)^2
+
O^{\le 1} \!\big(\varepsilon^2 r^{-1}\log (1/\ve)\big),
\end{equation}
where $\rho=\bar\rho e^{\varrho}$ is the original density function introduced in Section \ref{setup}.
Assuming the following estimate,
\begin{equation}\label{3.23.5.26}
\varrho
=\frac{w}{\bar c^2}
-(\frac{1}{2\bar c^4} 
+\frac{w''(\bar\rho)\bar\rho^2}{2\bar c^6}) w^2
+E
\end{equation}
where the error term $E=O^{\le 1}_q(\ve^3 r^{-3})$, 
we conclude (\ref{3.23.1.26}). 

Next we prove (\ref{3.23.5.26}).
 Due to $w(\bar\rho)=0$, the enthalpy can be regarded as the smooth composite function
$$ \mathfrak{W}(\varrho):=w(\bar\rho e^{\varrho})-w(\bar\rho)=w(\rho).
$$
 By an abuse of notation, we henceforth write $w(\varrho)$ for $\mathfrak{W}(\varrho)$. 
Since $\bar\rho>0$ and $w$ is smooth in a neighborhood of $\bar\rho$, this function is smooth for $\varrho$ sufficiently small and can be expanded at $\varrho=0$.
To derive such an expansion, we expand the enthalpy $w$ near $\bar \rho$, with $\delta\rho=\rho-\bar \rho$, 
\begin{equation*}
    w(\rho)=w'(\bar\rho)\delta\rho+\f12 w''(\bar\rho)(\delta \rho)^2+(\delta \rho)^3 R_1(\rho),
\end{equation*}
where 
\begin{equation*}
|R_1|\les 1, |\p_q((\delta \rho)^3 R_1)|\les  \ve^3 r^{-3}
\end{equation*}
and $w'(\bar\rho)=\bar c^2/\bar\rho$, where we used $|\varrho|+|\p_q\varrho|\les \ve r^{-1}$ due to Proposition \ref{12.31.4.25}.
 
In view of 
$
\bar\rho^{-1}(\rho-\bar\rho)= \varrho +
\frac12  \varrho^2+
\varrho^3 R_0(\varrho)
$
with $|R_0(\varrho)|\les 1$ and $|\p_q(\varrho^3R_0)|\les \ve^3 r^{-3}$, 
we have
\begin{equation}
w=\bar c^2 \varrho
+
\frac12\big(
\bar c^2 + w''(\bar\rho)\bar\rho^2
\big)\varrho^2
+
R_2,\label{3.23.6.26}
\end{equation}
where $R_2=O^{\le 1}_q(\ve^3 r^{-3})$. 
Since for the function $w(\varrho)$, we have
$$w(0)=0,
\quad
w'(0)=\bar c^2\neq 0,$$
the inverse function theorem implies $w(\varrho)$ is locally invertible in a neighbourhood of $\varrho=0$. Hence there exists a smooth local inverse  $\varrho(w)=\Theta(w)$, defined in a neighbourhood of $w=0$. 
Then we expand $\Theta(w)$ at $w=0$ in powers of $w$, yielding 
  $$\varrho = a\,w + b\,w^2 + E,$$
with $|E(w)|\les |w|^3$ and $a, b$ constants.  Higher order expansion can be  achieved similarly.

To determine the constants $a$ and $b$, we substitute the above formula into (\ref{3.23.6.26}) and matching the coefficient, which yields
$$a=\bar c^{-2}, b = -\frac{B}{\bar c^6},$$
where $ B=\frac12\big(
\bar c^2 + w''(\bar\rho)\bar\rho^2
\big).$
   To obtain the derivative control of $E$, we
substitute the obtained expansion to (\ref{3.23.6.26}) again, which implies
\begin{equation*}
w=
\bar c^2\Big(\frac{w}{\bar c^2} - \frac{B}{\bar c^6} w^2 + E\Big)
+
B\Big(\frac{w}{\bar c^2} - \frac{B}{\bar c^6} w^2 + E\Big)^2
+R_2.
\end{equation*}
Hence with $C_1$ and $C_2$ two constants depending on the constant states, we have
\begin{equation*}
\bar c^2 E+B(E^2+2E(\frac{w}{\bar c^2} - \frac{B}{\bar c^6} w^2 ))+C_1w^3+C_2 w^4+R_2=0.
\end{equation*}
Differentiating the above identity, schematically, we deduce that
\begin{equation*}
(\bar c^2+2B E+O(w)) \p_q E=\p_q( w^3+w^4)+E\p_q(w+w^2)+\p_q R_2.
\end{equation*}
Using Proposition \ref{12.31.4.25}, (\ref{Bernoulli}) and $r\approx \l t\r$ in $\Sigma_t^m$ with $t<1/\ve$, we have
$$
w = O(\varepsilon r^{-1}),\quad \p_q w = O(\varepsilon r^{-1}).$$
 Hence with $\ve$ sufficiently small, we derive by using the error control in (\ref{3.23.6.26})
\begin{equation*}
\p_q E=O(\ve^3 r^{-3}).
\end{equation*}
Thus we proved (\ref{3.23.5.26}).  The proof is therefore complete. 
\end{proof}

\begin{corollary}\label{12.31.7.25}  
    With $\Upsilon_{\omega_0, q_0}(t)$ the outgoing null geodesic starting from $\Sigma_{t_0}^m$, at $t_1=\frac{1}{2\ve}$, we have
    \begin{equation*}
\bar c\p_q (r\varrho)(\Upsilon_{\omega_0, q_0}(t_1))=\ve \p_q^2 F_0(\omega_0, q_0)+O((\ve\log(1/\ve))^2). 
\end{equation*}
\end{corollary}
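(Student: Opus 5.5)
Our plan is to differentiate the expansion \eqref{3.23.1.26} of Proposition \ref{7.03.1.26} in $q$, evaluate at the point $\Upsilon_{\omega_0,q_0}(t_1)$, and then replace the Cartesian values $(\omega,q)$ of that point by the initial labels $(\omega_0,q_0)$ using Corollary \ref{4.17.1.26}.

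\emph{Step 1.} Take $t=t_1$, which lies in $(\f12 t_1,2t_1)$. Apply $\p_q$ (at fixed $t,\omega$) to \eqref{3.23.1.26}. This gives
\begin{equation*}
\p_q(r\varrho)=\ve\bar c^{-1}\p_q^2F_0(\omega,q)-\ve^2\p_q\big(r^{-1}A_2(\p_qF_0)^2\big)+\p_q\mathcal E .
\end{equation*}
By \eqref{3.23.2.26}, $|\p_q\mathcal E|\les\ve^2\log(1/\ve)$. The quadratic term is $O(\ve^2 r^{-1})$, since $F_0$ is smooth and compactly supported and $\p_q r^{-1}=O(r^{-2})$. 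By Proposition \ref{7.9.4.26}, $r\approx\l t_1\r\approx\ve^{-1}$, so this term is $O(\ve^3)$. Multiplying by $\bar c$, we obtain at every point of $\Sigma^m_{t_1}$
\begin{equation*}
\bar c\,\p_q(r\varrho)=\ve\,\p_q^2F_0(\omega,q)+O(\ve^2\log(1/\ve)).
\end{equation*}
One must check that $\Sigma^m_{t_1}$ lies in the region $|q|\les1$ where Proposition \ref{7.03.1.26} applies. This holds because $q-q_0$ is small by Corollary \ref{4.17.1.26}.

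\emph{Step 2.} Let $(\omega,q)$ denote the Cartesian polar data of $\Upsilon_{\omega_0,q_0}(t_1)$. Corollary \ref{4.17.1.26} gives $|q-q_0|=O(\ve(\log(1/\ve))^2)$ and $|\omega-\omega_0|=O(\ve)$. Since $\p_q^2F_0$ is smooth with bounded derivatives in $(\omega,q)$, the mean value theorem gives
\begin{equation*}
\ve|\p_q^2F_0(\omega,q)-\p_q^2F_0(\omega_0,q_0)|\les \ve\big(\ve(\log(1/\ve))^2+\ve\big)=O\big((\ve\log(1/\ve))^2\big).
\end{equation*}
This is the dominant error and matches the stated bound. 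Combining with Step 1, and noting $\ve^2\log(1/\ve)\les(\ve\log(1/\ve))^2$ for small $\ve$, yields the corollary.

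\emph{Main obstacle.} The delicate point is the meaning of $\p_q$ in Proposition \ref{7.03.1.26}. It is the derivative in $q=r-\bar ct$ at fixed $(t,\omega)$, which is what the corollary requires, so differentiating the expansion term by term is legitimate given the bound on $\p_q\mathcal E$. The other point requiring care is the smoothness of $F_0$ in $\omega$ and $q$ used in Step 2. This follows from the smoothness of Radon transforms of $C_c^\infty$ functions.
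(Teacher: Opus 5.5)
Your proposal is correct and follows essentially the same route as the paper: you differentiate the expansion \eqref{3.23.1.26} in $q$, absorb the quadratic $\ve^2 r^{-1}$ term and $\p_q\mathcal E$ into an $O(\ve^2\log(1/\ve))$ error at $t_1$, and then replace $(\omega,q)$ by $(\omega_0,q_0)$ using Corollary \ref{4.17.1.26} and the smoothness of $\p_q^2F_0$. One minor remark: the paper's convention, visible in Lemma \ref{comp_radiation} where $\stc{\Lb}=\bar c^{-1}\p_t-\p_r=-2\p_q$, takes $\p_q$ at fixed $r+\bar c t$ rather than at fixed $t$; since the leading term depends on $(t,x)$ only through $(\omega,q)$, both readings give the same main term $\ve\bar c^{-1}\p_q^2F_0$, so your argument goes through unchanged.
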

\begin{proof}
Let $(t_1,x)=\Upsilon_{\omega_0, q_0}(t_1)$, with $x=r\omega$.  It is straightforward to derive from (\ref{3.23.1.26}) that,
$$
\partial_q(r\varrho)(t_1 ,x)
=
\frac{\varepsilon}{\bar c}\,\p_q^2 F_0(\omega, q)
-
\frac{\varepsilon^2}{r}\,A_2\,\partial_q\!\big((\p_q F_0)^2\big)(\omega, q)
+
R(t_1,x),$$
where
$A_2=\frac{1}{\bar c^2}+\frac{w^{''}(\bar\rho)\bar\rho^2}{2\bar c^4},$
and the remainder satisfies
$$
|R(t_1,x)| \les\varepsilon^2\log(1/\ve).
$$
Hence  we have
 \begin{equation*}
\bar c\p_q (r\varrho)(t_1, x)=\ve \p_q^2 F_0(\omega, q)+O(\ve^2\log(1/\ve)). 
\end{equation*}
Corollary \ref{12.31.7.25} then follows by using Corrollary \ref{4.17.1.26} and that $\p_q^2 F_0$ is smooth and compactly supported.
\end{proof}
Proposition \ref{12.31.1.25} follows directly by using Lemma \ref{comp_radiation} and the above result. Hence we complete the proof of Proposition \ref{12.31.1.25}.

\subsection{Proof of Theorem \ref{main}}
 Recall that the smooth solution extends until $T_\ve$. To study the exterior region $D^+$, we recall 
 $$T^e_\ve:=\sup\{t: \mbox{the exterior solution with } u\in [-\bar c^{-1}, \bar c^{-1}] \mbox{ is smooth in } [t_0, t)\}.$$
Clearly $T^e_\ve\ge T_\ve$.   We regard the timespan of $D^+$ as $[t_0, T_\ve^e)$ without mentioning. 

 In Proposition \ref{Riccati_prop}, we first derive a Riccati-type equation in (\ref{Riccati_ap}). 
 Moreover we give in (\ref{5.6.1.26})  good control of $L(\bb z)$ in $D^+$ even when $t\rightarrow {T^e_\ve}_-$. Both of them are crucial for the proof of Theorem \ref{main}.

\begin{proposition}\label{Riccati_prop}

 Along the outgoing null geodesic $\Upsilon_{\omega_0, q_0}$ starting from $\Sigma_{t_0}^m$, we have
\begin{equation}\label{Riccati_ap}
\frac{d}{dt}\ti z(t)=\f12\wp\tir^{-1}\ti z^2+\Er_1(t) \ti z(t)+\Er_0(t), \mbox{ in }D^+
\end{equation}
 with 
\begin{equation*}
 \Er_1(t)=O\left(\ve \l t\r^{-2}(\log t+1)\right),\, \Er_0(t)=O(\l t\r^{-2}\ve),
\end{equation*}
and the constants in the error control are universal. 

Moreover, we have in $D^+$
\begin{equation}\label{5.6.1.26}
L(\bb z)=O(\ve \log \l t\r\l t\r^{-2}).
\end{equation}
\end{proposition}

\begin{proof}
Using (\ref{eqn_den}) and (\ref{12.15.1.25}), we derive, with $\dot{}$ denoting $\p_t$ along the null geodesic, that
    \begin{align}
    cL(cz)&=\dot{c}z+c\dot{z}=\dot{c}z+c^2 \left(\tir (\sD \varrho-\widehat\Box_\bg\varrho)+z(c^{-1}\tir^{-1}-h)-\tir\varrho(Lh+h^2)\right)\nn\\
    &=\tir c^2\left(\f12 \wp (\Lb\varrho)^2+\N(\Phi, \bp\Phi)+\sD\varrho-z\tir^{-1}(h-\frac{1}{c\tir})- \varrho(Lh+h^2)\right)+\dot{c} z\nn\\
    \displaybreak[0]
    &=\tir c^2\big(\f12 \wp(\Lb\varrho-h\varrho)^2+\wp h \varrho(\Lb\varrho-h\varrho)+\f12\wp( h\varrho)^2+\N(\Phi, \bp\Phi)+\sD\varrho\nn\\&-\varrho (Lh+h^2)\big)
    +z\left(\dot{c}-c^2(h-\frac{1}{c\tir})\right)\nn\\
    &=\f12\tir^{-1}\wp\ti z^2+\ti z(c^{-1}\dot{c}-c(h-\frac{1}{c\tir})+c\wp   h \varrho)+\tir c^2\big(\f12\wp (h\varrho)^2+\N(\Phi, \bp\Phi)\nn\\
    &+\sD\varrho-\varrho (Lh+h^2)\big).\nn
    \end{align}
         Using (\ref{4.10.2.19}), (\ref{7.04.8.19}) and (\ref{6.3.1.23}), we write
    \begin{equation*}
    2(Lh+h^2)=-|\chih|^2-\tr\chi\c [L\Phi]+\N(\Phi, \bp\Phi)+\sD\varrho.
    \end{equation*}
           Note
    \begin{align*}
    \N(\Phi, \bp\Phi)&=(\Lb\varrho-h\varrho)\c \bar\bp\Phi+(h \varrho \c \bar\bp\Phi +\bar\bp \Phi\c \bar\bp\Phi).
\end{align*}
   
        It remains to estimate the error terms, which schematically take the following form 
        \begin{align*}
        \Er_1&=(h-\frac{1}{c\tir})+\wp   h \varrho+(\varrho+1) \bar \bp\Phi,\\ \Er_0&=\tir \big( \wp (h\varrho)^2
        +(\varrho+1)\sD\varrho+\varrho\big(|\chih|^2+\tr\chi[L\Phi])\big)+\tir(1+\varrho)(h \varrho  \bar\bp \Phi+\bar \bp \Phi\c \bar \bp \Phi),
        \end{align*}
     where the factors given by powers of $c$ are dropped in the above error terms.
     
        Using (\ref{3.21.1.26}), we deduce that, in $D^+$
\begin{align*}
    |\Er_1|&\les \ve \l t\r^{-2}(\log t +1), \quad|\Er_0|\les \ve \l t\r ^{-2}.
\end{align*}

Similar to the above estimates, we derive by using (\ref{3.21.1.26}) that 
\begin{align*}
&\bb \N(\Phi, \bp\Phi)=O(\ve^2 \l t\r^{-3}),\quad \bb(Lh+h^2)=O(\ve \l t\r^{-3}\log \l t\r) \\
&\bb z(c^{-1}\tir^{-1}-h)=O(\ve^2\log \l t\r \l t\r^{-2})\\
&h\varrho\bb(z+\tir h\varrho)=O(\ve^2 \l t\r^{-2}), \quad\bb z[L\Phi]=O(\ve^2 \l t\r^{-2}). 
\end{align*}
Hence in view of (\ref{nml-eqn}) and (\ref{3.21.1.26}) we conclude $L(\bb z)=O(\ve\log \l t\r \l t\r^{-2})$ as stated in (\ref{5.6.1.26}).

\end{proof}

Applying Proposition \ref{12.31.1.25} to the outgoing null geodesic $\Upsilon_*(t)$, initiated at $(\omega_*, q_*, t_0)$, at $t=t_1$, the first term on the right-hand side of (\ref{key_apx}) is $-2\ve \bar c^{-1}\p_q^2 F_0(\omega_*, q_*)$ which is positive and $\approx \ve$. 
With $\ve$ sufficiently small, this leads to
\begin{equation*}
\ti z(t_1)-\K>0
\end{equation*}
where 
$$
\mathcal K=\int_{t_1}^T |\Er_0(t)| dt \exp\int_{t_1}^T |\Er_1(t)| dt=o(\ve)
$$
with $\Er_0$ and $\Er_1$  the error terms in Proposition \ref{Riccati_prop}, and the integral is along $\Upsilon_*(t)$. 

Similar as \cite{Alinhac, Yin}, using the standard ODE argument \cite[Page 7, Lemma 1.3.2]{Hormander}, if $\ti z(t)$ is a solution of (\ref{Riccati_ap}) on $[0, T]$, 
  then \begin{footnote}{$2-$ denotes a fixed constant sufficiently close to $2$ from below.}\end{footnote}
 \begin{align}\label{5.12.1.26}
 \f12 \wp\left(\log(T+u)-\log (u+t_1)\right)&<(\ti z(t_1)-\K)^{-1} \exp(\int_{t_1}^T |\Er_1(t)|dt)\nn\\
 &\les\big(-2\ve \bar c^{-1}\p_q^2 F_0(\omega_*, q_*)+o(\ve)\big)^{-1} \exp(O(\ve^{2-})). 
 \end{align}
 Hence we conclude from (\ref{5.12.1.26})
the lifespan of $D^+$ satisfies 
$$\limsup_{\ve\rightarrow 0}\ve \log T_\ve^e\le \tau_*, $$
and thus (\ref{5.10.1.26}) holds true due to $T_\ve\le T_\ve^e$ and Theorem \ref{known}. 
 Using $T^e_\ve\ge T_\ve$ again and Theorem \ref{known}, we have 
  $$\liminf_{\ve\rightarrow 0} \ve \log T^e_\ve\ge \tau_*.
  $$
Hence we conclude (\ref{5.13.3.26}). 
Moreover using Proposition \ref{decay_ricci_coef} (2), due to $0<T_\ve^e<\infty$,
\begin{equation*}
\inf_{\Sigma_t^m}\bb \rightarrow 0, \mbox{ as } t\rightarrow T_{\ve-}^e. 
\end{equation*}

Finally we prove the blow-up of $\bp \Phi$ as $t\rightarrow T_{\ve-}^e$. 

Let $T_*$ be the first time when the solution $\ti z$ of (\ref{Riccati_ap}) blows up along $\Upsilon_*$. $T_*$ has the upper bound given by (\ref{5.12.1.26}).
 Clearly $T_\ve^e\le T_*$. 
 Next we prove at the point of shock formation which is at  $t= {T_\ve^e}_-$, $z$ blows up.   

Using (\ref{lb}), (\ref{3.22.1.21}) and (\ref{3.21.1.26}), integrating along an outgoing null geodesic $\Upsilon(t)$ starting from $\Sigma_{t_0}^m$,  we derive
$
\bb(t_1)-\bb(t_0)=O(\ve)log \l t_1\r,
$
which implies, in view of $\bb(t_0)=\bar c$ from Proposition \ref{local} and with $\ve$ sufficiently small,  that
\begin{equation}\label{7.21.3.26}
\bb(t_1)=\bar c+O(\ve \log (1/\ve))\approx \bar c.
\end{equation}
Using (\ref{5.6.1.26}), integrating along $\Upsilon(t)$ from $t_1$ to $t\in (t_1, T_\ve^e)$, we have    
\begin{equation}\label{7.21.2.26}
\bb z(t)-\bb z(t_1)=O(\ve^{2-}). 
\end{equation}

Using (\ref{lb}) and (\ref{3.22.1.21}), we derive
\begin{equation}\label{7.22.2.26}
L\bb=-\f12\wp \bb z\tir^{-1}+\bb[L\Phi]-\f12\bb h \varrho.
\end{equation}
In view of (3) in Proposition \ref{decay_ricci_coef}, we suppose shock forms along an outgoing null geodesic $\Upsilon_1(t)$, starting from $\Sigma_{t_0}^m$,  along the null cone $\H_{u_1}$ as $t\rightarrow T_\ve^e$, and suppose 
$z$ is bounded  as $t\rightarrow T_\ve^e$ along $\Upsilon_1$, then $\bb z({T_\ve}_-^e)=0$. Due to (\ref{7.21.2.26}), 
we have 
\begin{equation}\label{7.21.4.26}
\bb z(\Upsilon_1(t_1))=O(\ve^{2-}).
\end{equation}
Integrating (\ref{7.22.2.26}) along $\Upsilon_1$ yields
\begin{equation*}
\bb(\Upsilon_1(t_1))=-\int_{t_1}^{T_\ve^e}c L\bb dt'=\int_{t_1}^{T_\ve^e}\{ \f12 \wp c \bb z \tir^{-1}-c\bb[L\Phi]+\f12 c \bb h\varrho\}.
\end{equation*} 

Using $c-\bar c=O(\ve \l t\r^{-1})$ due to Proposition \ref{7.9.4.26} (1), and using (\ref{3.21.1.26}), (\ref{7.21.2.26}) and (\ref{7.21.3.26})
\begin{align}\label{7.26.2.26}
\f12\wp \int_{t_1}^{T_\ve^e}\tir^{-1}\bar c (\bb z(\Upsilon_1(t_1))+O(\ve^{2-}))\ges \bar c.
\end{align}
Hence
\begin{align}\label{8.5.1.26}
\f12 \wp|\bb z(\Upsilon_1(t_1))+O(\ve^{2-})|\left(\log (T_\ve^e+u_1)-\log(t_1+u_1)\right)\ges 1.
\end{align}
Using the fact that $T_\ve^e\le T_*$, we have (\ref{5.12.1.26}) holds for $T_\ve^e$.
Noting that with $F(u)=\log (\frac{T_\ve^e+u}{t_1+u})$, it is direct to estimate
\begin{equation*}
F(u_1)-F(u)=O(\ve|u-u_1|).
\end{equation*}
Hence, in view of (\ref{5.12.1.26}) and (\ref{7.21.4.26}), we deduce from (\ref{8.5.1.26}) that
\begin{equation*}
\left((-2 \ve \bar c ^{-1} \p_q^2 F_0(\omega_*, q_*)+o(\ve))^{-1}+O(\ve) \right)|\bb z(\Upsilon_1(t_1))+O(\ve^{2-})|\ges 1,
\end{equation*}
which implies
\begin{equation*}
|\bb z(\Upsilon_1(t_1))|\ges \ve.
\end{equation*} 
If $ \bb z(\Upsilon_1(t_1))\les -\ve$, the integral in (\ref{7.26.2.26}) is negative, which is impossible.
Therefore
\begin{equation*}
\bb z(\Upsilon_1(t_1))\ges \ve,  \begin{footnote}{
  Due to (\ref{7.21.3.26}), this
actually implies $z(\Upsilon_1(t_1))\ges \ve$  i.e. compression formed already at $\Upsilon_1(t_1)$.}\end{footnote} 
\end{equation*}
  contradicting  (\ref{7.21.4.26}).
 We conclude 
$|z|$ blows up  when approaching the first shock  along $\Upsilon_1(t)$. 
 
 Then, using (\ref{3.21.1.26}) again, we have 
$$\limsup_{t\rightarrow T_{\ve-}^e}\tir|\Lb \varrho|=\infty.$$ Using (\ref{8.22.3.25}) and (\ref{3.21.1.26}),  we have $\limsup_{t\rightarrow T_{\ve-}^e}\tir|\Lb v_\bN|= \infty$  when approaching the shock along $\Upsilon_1$. Consequently, due to $\tir\approx \l t\r$, in view of (\ref{3.21.1.26}) and $c-\bar c=O(\ve)$,  $$\limsup_{t\rightarrow T_{\ve-}^e}|\bB\Phi|=\infty$$ approaching the shock.
  Therefore the proof of Theorem \ref{main}  is complete.
  
\section{Appendix}\label{Apdx}
\subsection{Calculation of $G(\omega)$}
We first check the result 
\begin{proposition}\label{G-value}
    The quantity $G(\omega)=-\frac{1+\ga}{\bar c}$ in Theorem \ref{known}. 
\end{proposition}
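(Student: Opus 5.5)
The plan is to extract the quadratic part of \eqref{wave} and evaluate Hörmander's null-form symbol on the Minkowskian null covector. I first normalize the linear part to $\p_s^2-\Delta$ by rescaling time, so that the convention behind \cite[Theorem 6.5.9]{Hormander} applies verbatim. The only genuinely physical input is the first-order expansion of $c^2$ around the constant state.

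\emph{Step 1: expansion of $c^2$.} By \eqref{12.29.1.25} and Bernoulli's law, $c^2$ is a function of $\iota=-(\p_t\phi+\f12|\nab\phi|^2)$ through $\iota=w(\rho)$. Since $w'(\rho)=p'(\rho)/\rho=c^2/\rho$, the chain rule gives
\[
\frac{dc^2}{d\iota}=\frac{dc^2}{d\rho}\,\frac{\rho}{c^2}=\frac{2\rho c'}{c}.
\]
For $p=A\rho^\ga$ we have $c^2\propto\rho^{\ga-1}$, hence $\rho c'/c=\frac{\ga-1}{2}$. Therefore
\[
c^2=\bar c^2+(\ga-1)\iota+O(\iota^2)=\bar c^2-(\ga-1)\p_t\phi+O(|\bp\phi|^2).
\]
This matches $\wp-1=\frac{\ga-1}{2}$.

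\emph{Step 2: quadratic truncation.} Substituting into \eqref{wave} and discarding the cubic terms (the last sum, and the $O(|\bp\phi|^2)\Delta\phi$ part), the equation reads
\[
\p_t^2\phi-\bar c^2\Delta\phi+2\p_k\phi\,\p_t\p_k\phi+(\ga-1)\p_t\phi\,\Delta\phi=O(|\bp\phi|^2|\bp^2\phi|).
\]
Only these quadratic terms enter $G$.

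\emph{Step 3: rescaling and evaluation of the symbol.} Set $s=\bar c t$, so $\p_t=\bar c\p_s$, and divide by $\bar c^2$. This gives
\[
\p_s^2\phi-\Delta\phi+\bar c^{-1}\big(2\p_k\phi\,\p_s\p_k\phi+(\ga-1)\p_s\phi\,\Delta\phi\big)=\text{cubic}.
\]
Hörmander's $G(\omega)$ is obtained by replacing each derivative by the corresponding component of $\hat\omega=(-1,\omega)$, i.e.\ $\p_s\mapsto-1$ and $\p_k\mapsto\omega_k$; this corresponds to $q=r-s$. Using $\omega_k\omega_k=1$ and $\Delta\mapsto|\omega|^2=1$, the two quadratic terms contribute
\[
2\cdot(-1)+(\ga-1)\cdot(-1)=-(\ga+1),
\]
so $G(\omega)=-\frac{1+\ga}{\bar c}=-2\bar c^{-1}\wp$.

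\emph{Consistency check.} As a cross-check of the normalization, insert the ansatz $\phi\approx\ve r^{-1}F(\omega,q,\ve\log t)$ into the rescaled equation. The linear part produces $2\p_q\p_\tau F$ times $\ve^2 r^{-2}$. The quadratic part produces $\bar c^{-1}(-(\ga+1))\p_qF\,\p_q^2F$ times $\ve^2r^{-2}$. This yields Hörmander's asymptotic Burgers-type equation $\p_\tau\p_q F=\f12 G\,\p_qF\,\p_q^2F$ (up to his sign convention), confirming the value of $G$.

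\emph{Main obstacle.} The main difficulty is bookkeeping of conventions rather than analysis. One must check that the time rescaling does not alter the definition of $\tau_*$ in Theorem~\ref{known} beyond the factor already displayed; the $\bar c^{-1}$ in $F_0$ and in $q=r-\bar c t$ is arranged precisely for this. One must also confirm the sign convention of $\hat\omega$ in \cite{Hormander}, which I would verify against the model equation $\p_t^2u-\Delta u=\p_t(F(\p_tu))$.
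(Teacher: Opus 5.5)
Your proposal is correct and follows the paper's proof: expand $c^2=\bar c^2-(\ga-1)\p_t\phi+O(|\bp\phi|^2)$, substitute into \eqref{wave}, read off the quadratic coefficients, and contract them with the null covector. The only difference is cosmetic. You rescale $s=\bar c t$ and use $\hat\omega=(-1,\omega)$, while the paper stays in $t$ and contracts coefficients of size $\bar c^{-2}$ with $\hat\omega=(-\bar c,\omega)$; both give the same invariant contraction $-(\ga+1)/\bar c$.
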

\begin{proof}
We rely on the method of \cite[Chapter 6]{Hormander} to calculate $G(\omega)$. 
In the sequel our notation is consistent with the convention in \cite[Page 123]{Hormander}.
$$
G(\omega)=\sum_{j,k,l=0}^3 g^{jkl}\,\hat\omega_j\hat\omega_k\hat\omega_l,$$
with the characteristic covector
$\hat\omega=(-\bar c,\omega)$ \begin{footnote}{Since 
$q=r-\bar c t,$
the characteristic covector is
$\hat\omega=(-\bar c,\omega)$.}\end{footnote}, the main nonlinear term in the asymptotics coming from the linear-in-$\bp\phi$ part of the quasilinear coefficients.

For the irrotational Euler potential equation, expanding around the constant state and keeping only the terms linear in first derivatives gives
\begin{equation}\label{12.30.1.25}
\frac{1}{\bar c^2}\phi_{tt}-\Delta\phi
+\frac{2}{\bar c^2}\sum_{i=1}^3 \phi_i\,\phi_{ti}
+\frac{\gamma-1}{\bar c^2}\phi_t\,\Delta\phi
=\text{cubic terms}.
\end{equation}
Indeed, 
in the small amplitude setting,  there holds
\begin{equation*}
\begin{split}
    c^2-\bar c^2
    &=-2\bar \rho\frac{c'(\bar \rho)}{\bar c}\p_t \phi+O(|\bp \phi|^2)=-(\ga-1) \p_t \phi+O(|\bp\phi|^2).
    \end{split}
\end{equation*}
Substituting this into the exact equation gives
\begin{equation*}
\phi_{tt}+2\sum_{i=1}^3\phi_i\phi_{ti}
-\bigl(\bar c^2-(\gamma-1)\phi_t+O(|\bp\phi|^2)\bigr)\Delta\phi
+\sum_{i, k=1}^3\phi_i\phi_k\phi_{ik}=0.
\end{equation*}
This implies (\ref{12.30.1.25}).
Note that the effective metric 
\begin{equation}\label{emetric}
   \bg^{00}=\frac{1}{\bar c^2},\, \bg^{0i}=\bg^{0i}=\frac{\p_i \phi}{\bar c^2},\, \bg^{ij} =(\frac{\ga-1}{\bar c^2}\phi_t-1)\delta_{ij}.
\end{equation}
At the ground state $\bp\phi=0$ and $\rho=\bar \rho$,  
define $\bg^{\mu\nu\ga}$ by 
$$
\bg^{\mu\nu\ga}=\frac{\p \bg^{\mu\nu}}{\p(\p_\ga \phi)}\mathlarger|_{\bp\phi=0}.
$$
Now read off the nontrivial components of $g^{\mu\nu\ga}$:
$$g^{0ii}=g^{i0i}=\frac{1}{\bar c^2},\qquad
g^{ii0}=\frac{\gamma-1}{\bar c^2}.
$$
Therefore
\begin{align*}
G(\omega)
&=\sum_{i=1}^3\left(
g^{0ii}\hat\omega_0\hat\omega_i\hat\omega_i
+g^{i0i}\hat\omega_i\hat\omega_0\hat\omega_i
+g^{ii0}\hat\omega_i\hat\omega_i\hat\omega_0
\right)\\
&=\sum_{i=1}^3\left(
\frac1{\bar c^2}(-\bar c)\omega_i^2
+\frac1{\bar c^2}\omega_i(-\bar c)\omega_i
+\frac{\gamma-1}{\bar c^2}\omega_i^2(-\bar c)
\right)=-\frac{\gamma+1}{\bar c}.
\end{align*}

\end{proof}
\subsection{ Geometric calculations for the rotation vector-fields}
In this subsection, we derive geometric identities those are used in proving the comparison estimates in Section \ref{decay}.
\begin{proposition}\label{7.9.1.24}
With ${y'}^k=\bN^k-\frac{x^k}{r}$, there hold
\begin{align}
&1-\l \bN, \p_r\r\approx r^{-2}|\la|^2\label{10.16.3.22}\\
&|y'|\approx  r^{-1}|\la|\label{12.20.3.21}\\
&L\la^a= {}\rp{a}\Omega \Phi-c^{-1}\tensor{\ud \ep}{^a_j_m} v^m \bN^j.\label{3.22.5.21}\\
&L r=c^{-1}v(\bN)+{y'}^k(\frac{ x^k}{r}-c^{-1}v^k)+1.\label{8.17.1.22}\\
\displaybreak[0]
&L (cy^i)+\frac{y^i}{\tir}=k_{A\bN} e_A^i -\frac{v^i}{c \tir}\label{4.7.1.26}
\end{align}
\end{proposition}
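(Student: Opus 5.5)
The five identities are pointwise computations in $D^+$. I will prove them directly from the definitions of $\bN$, $\la^a$, ${y'}$, $y$, $L=c^{-1}(\p_t+v^i\p_i)+\bN^i\p_i$, and the frame equations of Lemma \ref{6.7con}. No estimates are needed beyond algebra and the irrotationality and Euler equations (\ref{4.23.1.19}). I treat them in the order stated.

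For (\ref{10.16.3.22}) and (\ref{12.20.3.21}), I decompose $\bN$ with respect to the orthonormal frame $\{\p_r, \O/r\}$. Since ${}\rp{a}\O=\tensor{\ud\ep}{^a_j_k}x^j\p_k$, one has $\sum_a\l {}\rp{a}\O,\bN\r^2=|x\times \bN|^2=r^2(1-\l\bN,\p_r\r^2)$, so $\sum_a|\la^a|^2=r^2(1-\l \bN,\p_r\r)(1+\l\bN,\p_r\r)$. Because $\bN$ stays close to $\p_r$ (it starts equal to $\p_r$ at $t_0$, and by a bootstrap $\l\bN,\p_r\r\ge 0$), the factor $1+\l\bN,\p_r\r\approx 1$, which gives (\ref{10.16.3.22}). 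Then $|y'|^2=|\bN-\p_r|^2=2(1-\l\bN,\p_r\r)\approx r^{-2}|\la|^2$, which gives (\ref{12.20.3.21}).

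For (\ref{8.17.1.22}), I compute directly $Lr=c^{-1}(\p_t r+v^k x^k/r)+\bN^kx^k/r$. Writing $\bN^k=x^k/r+{y'}^k$ and $c^{-1}v^kx^k/r=c^{-1}v(\bN)-c^{-1}v^k{y'}^k$ gives exactly the stated formula.

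For (\ref{3.22.5.21}), I write $L\la^a=L(\tensor{\ud\ep}{^a_j_k})x^j\bN^k$ in two pieces. The first piece is the term where $L$ hits $x^j$, giving $(Lx^j)\tensor{\ud\ep}{^a_j_k}\bN^k$. Since $Lx^j=c^{-1}v^j+\bN^j$, the $\bN$ part cancels by antisymmetry, leaving $-c^{-1}\tensor{\ud\ep}{^a_j_m}v^m\bN^j$ up to an index-convention sign. The second piece is $\O^kL\bN^k$. From Lemma \ref{6.7con} and $\bN=\f12(e_4-e_3)$, I get $\l \bd_L\bN,e_A\r$ in terms of $\zb_A$ and $\Ga^A_{L\bN}=-\sn_A\log c$ of Lemma \ref{dg}. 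Using (\ref{5.7.1.26}), $\zb_A=-k_{A\bN}+\sn_A\log c$, I then convert $k_{A\bN}$ and $\sn\log c$ via (\ref{6.18.1.26}) into $[\sn\Phi]$. Contracting with $\Omega$ yields the term ${}\rp{a}\Omega\Phi$, which is understood symbolically as angular derivatives of $\Phi$.

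The identity (\ref{4.7.1.26}) is the main obstacle, because the coefficients must combine precisely. Since $cy^i=c\bN^i-x^i/\tir$, I use $cL\tir=1$ and $L x^i=c^{-1}v^i+\bN^i$ to get
\[
L(x^i/\tir)=\frac{c^{-1}v^i+\bN^i}{\tir}-\frac{x^i}{c\tir^2}=\frac{c^{-1}v^i}{\tir}+\frac{y^i}{\tir}.
\]
It remains to show $L(c\bN^i)=k_{A\bN}e_A^i$. I plan to verify this from $L(c\bN^i)=\bN^iLc+c\,L\bN^i$. Here the Euclidean derivative $L\bN^i$ has its $\bN$-component zero, and its $e_A$-components are computed from the frame equation for $\bd_4 e_3$ together with (\ref{4.9.1.26}). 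Taking care with the relation between Euclidean and spacetime covariant derivatives, I expect the $L\log c$ terms to cancel, leaving $\zb_A-\sn_A\log c$, which equals $-k_{A\bN}$ up to sign. Tracking this sign and the convention-dependent Christoffel contributions is where I anticipate the real work.
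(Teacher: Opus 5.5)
You handle (\ref{10.16.3.22}), (\ref{12.20.3.21}) and (\ref{8.17.1.22}) as the paper does: the identity $\l\bN,\p_r\r^2+r^{-2}\sum_a(\la^a)^2=1$, then $|y'|^2=2-2\l\bN,\p_r\r$, then a direct computation of $Lr$. The paper only cites Christodoulou--Miao for (\ref{3.22.5.21}). Your direct computation of it is adequate at the symbolic level, because the tangential components of $L\bN^k$ are combinations of $k_{A\bN}$ and $\sn_A\log c$, i.e.\ angular derivatives of $\Phi$.

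The gap is in (\ref{4.7.1.26}). Your reduction $L(cy^i)+\tir^{-1}y^i=L(c\bN^i)-\frac{v^i}{c\tir}$ is correct. The remaining claim, $L(c\bN^i)=k_{A\bN}e_A^i$, which you leave open, is false, so no bookkeeping of signs or Christoffel symbols will produce it.

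\emph{Normal component.} You note yourself that $\bN^iL\bN^i=\frac12L(|\bN|_e^2)=0$. Hence $\bN^iL(c\bN^i)=Lc=(\wp-1)c\,L\varrho$, which is nonzero in general. But $k_{A\bN}e_A^i$ is tangential, so nothing can cancel $\bN^iLc$; your expected cancellation of the $L\log c$ terms cannot occur.

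\emph{Tangential components.} For the metric (\ref{metric}), $\Ga^{\bN}_{L\bN}=\frac12L(\bg_{jk})\bN^j\bN^k=0$. By irrotationality, $\Ga^A_{L\bN}=\frac{1}{2c}\bN^je_A^k(\p_kv_j-\p_jv_k)=0$. Therefore $e_A^iL\bN^i=-\zb_A=k_{A\bN}-\sn_A\log c$.

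\emph{Convention-free check.} For $\hat L^\mu=\bg^{\mu\nu}\p_\nu u$, the geodesic equation gives $\hat L(\p_iu)=\frac12\p_i(\bg_{\mu\nu})\hat L^\mu\hat L^\nu$. Equivalently,
\[
L(\bb^{-1}\bN^i)=-\bb^{-1}\big(\p_i\log c+c^{-1}\bN^j\p_iv_j\big).
\]
Its $\bN$-projection is (\ref{lb}), and its $e_A$-projection is the formula above.

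What is actually true is
\[
L(cy^i)+\frac{y^i}{\tir}=\big(c\,k_{A\bN}-\sn_Ac\big)e_A^i+\bN^i\,Lc-\frac{v^i}{c\tir}.
\]
Even the outcome you anticipate, $e_A^iL\bN^i=k_{A\bN}$, would give only $L(c\bN^i)=\bN^iLc+c\,k_{A\bN}e_A^i$.

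The paper reaches the stated form through (\ref{4.9.1.26}), namely $\Ga^{\bN}_{L\bN}=L\log c$ and $\Ga^A_{L\bN}=-\sn_A\log c$. These give $L\bN^i=(-\zb_A+\sn_A\log c)e_A^i-L\log c\,\bN^i$. The $\bN$-component there contradicts $|\bN|_e=1$. Even granting it, one gets $L(c\bN^i)=c\,k_{A\bN}e_A^i$, not $k_{A\bN}e_A^i$. So your observation $\bN^iL\bN^i=0$ is right, and it is exactly what obstructs (\ref{4.7.1.26}) as written.

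The discrepancy does not harm the paper's argument. The extra terms $\bN^iLc$, $\sn_Ac$ and $(c-1)k_{A\bN}$ are all $O(|L\Phi|+|\sn\Phi|)=O(\ve\l t\r^{-2})$ by (\ref{3.21.1.26}). Integrating $L(c\tir y^i)=\tir\big(L(cy^i)+\tir^{-1}y^i\big)$ from $y(t_0)=0$ therefore still gives $\tir y=O(\ve\log\l t\r)$ in Proposition \ref{7.9.4.26}. To close your proof, prove the corrected identity, or state (\ref{4.7.1.26}) modulo terms of the form $\bar\bp\Phi$, rather than trying to force the cancellation.
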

\begin{proof}
We first prove
\begin{equation}\label{3.20.2.21}
(\l \bN, \p_r\r)^2+r^{-2}\sum_{a=1}^3(\la^a)^2=1.
\end{equation}
To see it, we decompose
\begin{equation*}
\bN=\l\bN, \p_r\r \p_r+r^{-2}\sum_{i=1}^3\l \bN,{}\rp{i}\O\r {}\rp{i}\O.
\end{equation*}
Note
\begin{align*}
\l\bN, \bN\r&=\l\p_r, \bN\r\l\p_r, \bN\r+r^{-2}\l \bN, {}\rp{i}\O\r\l {}\rp{i}\O, \bN\r\\
&=\l \p_r, \bN\r^2+r^{-2}\sum_{i=1}^3 (\la^i)^2,
\end{align*}
which gives (\ref{3.20.2.21}). By using (\ref{3.20.2.21}), (\ref{10.16.3.22}) is proved.

By definition and (\ref{3.20.2.21}), it is direct to have
$$
\sum_{k=1}^3|{y'}^k|^2=2-2\l \bN, \p_r\r
$$
which gives (\ref{12.20.3.21}) in view of (\ref{10.16.3.22}). 

The proof of (\ref{3.22.5.21}) can be found in \cite[Page 92-93]{Miao_thesis}. 

Next we derive
\begin{equation*}
\bN r={y'}^k \p_k r+\frac{x^k}{r}\p_k r=  {y'}^k \frac{x^k}{r}+1.
\end{equation*}
Using the above identity, by direct calculation, we have
\begin{align*}
L r&=\bT r+\bN r=(cr)^{-1} v^i x^i+1+ {y'}^k \frac{x^k}{r}\\
&=c^{-1}v(\bN)+{y'}^k(\frac{ x^k}{r}-c^{-1}v^k)+1
\end{align*}
which gives (\ref{8.17.1.22}).

Next we prove (\ref{4.7.1.26}). 
Using (\ref{4.9.1.26}), we calculate
\begin{align*}
&(\bd_L\bN)_A=L\bN^i (e_A)_i+L^\a \bN^\b \Ga_{\a\b}^A=L \bN^i (e_A)_i-\sn_A\log c\\
&0=(\bd_L \bN)_\bN=L \bN^i \bN_i + L\log c.
\end{align*}
Then using Lemma \ref{6.7con}, we obtain
\begin{equation*}
L\bN^i=(-\zb_A+\sn_A\log c) e_A^i- L \log c \bN^i.
\end{equation*}
Hence
\begin{align*}
cL(cy^i)&=cL(c\bN^i)-cL(x^i)/\tir-x^i cL(\tir^{-1})\\
&=cL(c\bN^i)-\tir^{-1}\left(c(\bN^i-\frac{x^i}{c\tir})+v^i\right)\\
&=c k_{A\bN}-\tir^{-1}(v^i+c y^i).
\end{align*}
We conclude (\ref{4.7.1.26}).
\end{proof}
\subsection{Derivation of (\ref{4.10.1.19}) and (\ref{4.10.2.19})}\label{Re_wave}
Recall from \cite{{Jared_Luk}} that
\begin{align}
& \Box_{\scg} v^i= \ti\sQ^i, \label{4.10.1.19'}\\
& \Box_{\scg} \varrho= \ti\sQ^0,\label{4.10.2.19'}
\end{align}
with
\begin{align*}
&\ti\sQ^i:=-(1+c^{-1}c')\scg^{\a\b}\p_\a\varrho\p_\b v^i, \\
&\ti\sQ^0:=-3c^{-1}c' \scg^{\a\b}\p_\a \varrho \p_\b \varrho+2\sum_{1\le a<b\le 3}\big(\p_a v^a\p_b v^b-\p_b v^a \p_a v^b\big).
\end{align*}
If $\bg=c^2 \scg$, then it holds for a smooth function $f$ that
\begin{equation}\label{11.21.1.24}
c^2 \Box_\bg f=\Box_{\scg} f+2\scg^{\mu\nu}\p_\mu \log c \p_\nu f.
\end{equation}
We can obtain (\ref{4.10.1.19}) from (\ref{4.10.1.19'}) and the above formula directly.
(\ref{11.21.1.24}) also gives 
\begin{equation}\label{11.21.2.24}
    c^2 \Box_\bg \varrho=\Box_{\scg} \varrho+2\scg^{\mu\nu}\p_\mu\log c \p_\nu \varrho. 
\end{equation}
For the second term in $\ti \sQ^0$,  using (\ref{4.23.1.19}) we calculate directly
\begin{align*}
\sum_{1\le a<b\le 3}\big(\p_a v^a\p_b v^b-\p_b v^a \p_a v^b\big)&=-\f12(\p_i v^j\p_i v^j-(\div v)^2)\\
&=-\f12 (\p_i v^j \p_i v^j-(\bT v^i)^2+(\bT v^i)^2-c^2(\bT \varrho)^2)\\
&=-\f12 (\bg^{\mu\nu}\p_\mu v^j \p_\nu v^j+c^2 \bg^{\mu\nu}\p_\mu \varrho\p_\nu \varrho).
\end{align*}
Substituting the above formula to (\ref{11.21.2.24}) yields (\ref{4.10.2.19}).

\section*{Acknowledgements}

Sergiu Klainerman is supported by  NSF grant DMS-2453843.
Shiwu Yang is supported by the National Key R\&D Program of China 2021YFA1001700 and  the National Science Foundation of China  12141102, 12426203.
Pin Yu is supported by NSFC12141102, New Cornerstone Investigator Program 100001127  and Xiao-Mi Professorship.


\begin{thebibliography}{9999}
\bibitem{Alin_4}  Alinhac, S. {\it Une solution approch\'ee en grand temps des \'equations d’Euler compressibles axisym\'etriques en dimension
deux.} Comm. Partial Differential Equations 17 (1992), no. 3-4, 447–490.
\bibitem{Alinhac}
Alinhac S. {\it Temps de vie des solutions regulieres de equations d'Euler compressibles axisymetriques en 
dimendion deux.} Invent Math, 1993, 111:627-667
\bibitem{Alin1} Alinhac, S. {\it  Blowup of small data solutions for a quasilinear wave equation in two space dimensions.} Ann. of Math.
(2) 149 (1999), no. 1, 97–127.
\bibitem{Alin2} Alinhac, S. {\it Blowup of small data solutions for a class of quasilinear wave equations in two space dimensions. II}, Acta
Math. 182 (1999), no. 1, 1–23.
\bibitem{Andersson_Moncrief} Andersson, L. and Moncrief, V. {\it Elliptic-Hyperbolic Systems and the Einstein Equations.} Ann. Henri Poincar\'e 4, 1–34 (2003).
\bibitem{Buck_2} Buckmaster, T., Shkoller, S. and Vicol, V. {\it Formation of shocks for 2D isentropic compressible Euler.} Comm. Pure
Appl. Math. 75 (2022), no. 9, 2069–2120.
\bibitem{Buck_3} Buckmaster, T., Shkoller, S. and Vicol, V. {\it Formation of point shocks for 3D compressible Euler.} Comm. Pure Appl.
 Math. 76 (2023), no. 9, 2073–2191.
\bibitem{Buck_4} Buckmaster, T., Shkoller, S. and Vicol, V. {\it Shock formation and vorticity creation for 3d Euler.}  Comm. Pure Appl.
 Math. 76 (2023), no. 9, 1965–2072.
\bibitem{CK} Christodoulou, D. and Klainerman, S.,
{\it The Global Nonlinear Stability of Minkowski Space.} Princeton Mathematical Series 41, 1993, 526 pp.
\bibitem{shock_demetrios}Christodoulou, D. {\it The formation of shocks in 3-dimensional fluids.} EMS Monographs in Mathematics, European
 Mathematical Society (EMS), Z\"urich, 2007.
 \bibitem{Miao_thesis} Christodoulou, D. and Miao, S. {\it Compressible Flow and Euler’s Equations}, (monograph, 602 pp.) Surveys in Modern
Mathematics Volume 9, International Press (ISBN 9781571462978), 2014.
\bibitem{Chris_Perez} Christodoulou, D and Perez, D.R. {\it On the formation of shocks of electromagnetic plane waves in
non-linear crystals}, J. Math. Phys. 57, 081506 (2016)
\bibitem{Hormander_1}H\"{o}rmander L. {\it The lifespan of classical solutions of nonlinear hyperbolic equations.} Mittag-Leffler report 
no.5, 1985 

\bibitem{Hormander} H\"{o}rmander, L {\it Nonlinear hyperbolic differential equations.} Lectures, 1986-1987

 \bibitem{FJohn} John, F. {\it  Formation of singularities in one-dimensional nonlinear wave
propagation.} Comm. Pure Appl. Math., 27 (1974) 377-405.

\bibitem{FJohn1}John, F. Blow-up of solutions of nonlinear wave equations in three space dimensions. Manuscripta Math 28, 235–268 (1979).

\bibitem{FJohn2} John, F., {\it Blow up for quasilinear wave equations in three
space dimensions.} Comm. Pure Appl. Math. 34 (1981), 29-51.

   \bibitem{FJohn3} John, F. { \it Blow-up of radial solutions of $ u_{tt} = c^2(u)\triangle u$ in three space dimensions}, Mat. Apl.Comput. 4 (1985), no. 13–18.

 \bibitem{FJohn4} John, F. {\it Existence for large times of strict solutions of nonlinear wave equations in three space dimensions for small initial data}, Comm. Pure Appl. Math. 40 (1987), no. 1, 79–109.


  \bibitem{FJohn5} John  F. {\it Solutions of quasilinear wave equations with small initial data. The third phase}, Nonlinear hyperbolic problems
(Bordeaux, 1988), 1989, pp. 155–184.

\bibitem{John-K} John F.  and Klainerman S. {\it Almost global existence  to nonlinear wave equations in three space dimensions.}
 Comm. Pure Appl. Math. 37, 1984, pp443-455.





\bibitem{K-Ma}  Klainerman,   S. and  Majda  A. {\it Formation of singularities for wave equations including the nonlinear vibrating
string,} Comm. Pure Appl. Math. 33 (1980), no. 3, 241–263. MR562736 (81f:35080)

\bibitem{K1} Klainerman S.  {\it Uniform decay estimates and the Lorentz invariance of the
classical wave equations}, Communications on Pure and Applied
Mathematics, {\bf 38} (1985), 321-332.

\bibitem{K-commu}Klainerman, S.
{\it A commuting vectorfield approach to Strichartz type inequalities and
applications to quasilinear wave equations.}
Int. Math. Res. Notices 2001, No 5, 221--274.

\bibitem{KR1} Klainerman, S. and Rodnianski, I.,
{\it Rough solutions to the Einstein vacuum equations},
Ann. Math. 161 (2005), 1143--1193.
\bibitem{Lax1} Lax, P. {\it Development of singularities of solutions of nonlinear hyperbolic
partial differential equations.} J. Mathematical Phys., 5:5 (1964) 611-
614.
\bibitem{Lax2}Lax, P. {\it The formation and decay of shock waves.} Amer. Math. Monthly, 79, 1972, pp. 227-241.
\bibitem{TPLiu} Liu, T.-P. {\it The development of singularities in the nonlinear waves
for quasi-linear hyperbolic partial differential equations.} J. Differential
Equations, 33 (1979) 92-111.
\bibitem{Jared_Luk} Luk, J and Speck, J. {\it The hidden null struct
equations and a prelude to applications.}  Journal of Hyperbolic Differential Equations, Vol. 17, No. 01, pp. 1-60 (2020)
\bibitem{Spck-luk_2}Luk, J and Speck, J. {\it Shock formation in solutions to the $2D$ compressible Euler equations in the presence of non-zero vorticity.}
 Inventiones mathematicae, 214(3), October 2018.
\bibitem{Spck-luk_3} Luk, J. and Speck, J. {\it The stability of simple plane-symmetric shock formation
for 3D compressible Euler flow with vorticity and entropy.}  Analysis and PDE  Volume 17, No.3, 2024
  \bibitem{Pin-Shuang} Miao, S. and Yu, P. {\it On the formation of shocks for quasilinear wave equations.} Inventiones mathematicae
207 (2017), no. 2, 697–831.

\bibitem{Oleinik}  Oleinik, O.A \textit{  Discontinuous solutions of non-linear differential equations}, Uspehi Mat. Nauk (N.S.) 12 (1957), no. 3(75), 3–73.

\bibitem{Sideris1} Sideris, T. {\it Formation of singularities in solutions to nonlinear hyperbolic equations.}
Arch. Ration. Mech. Anal. 1984-01, Vol.86 (4), p.369-381

\bibitem{Sideris} Sideris, T. {\it Formation of singularities in three-dimensional compressible fluids.} Comm. Math. Phys. 101 (1985), no. 4,
475–485
\bibitem{Spck_shock_1}Speck, J. {\it Shock formation in small-data solutions to 3D quasilinear wave equations.} Mathematical Surveys
and Monographs, 2016.

\bibitem{rough_fluid}Wang, Q. {\it Rough solutions of the 3-D compressible Euler equations.} Ann. of Math. (2) 195 (2022), no. 2,
509–654.
\bibitem{Wang2024} Wang, Q {\it On global dynamics of $3$-D irrotational compressible fluids}, arxiv:2407.13649, 2024,
232pp
\bibitem{Wangnotes} Wang, Q {\it Blowup of small data solutions to $3$-D full Compressible Euler System for a General Polytropic Gas}. In preparation, 2026.  
\bibitem{Yin}Yin, H. and  Qiu, Q. {\it The lifespan for 3-D spherically symmetric compressible euler equations.} Acta Mathematica Sinica 14, 527–534 (1998).

\end{thebibliography}
\end{document}